\documentclass[11pt]{amsart}
\frenchspacing \mathsurround=1pt \emergencystretch=5pt
\usepackage{amsmath,latexsym,amssymb,times,mathptm,enumerate,amsbsy,amsrefs,  hyperref, color, graphicx}
\hypersetup{colorlinks,  citecolor=blue, linkcolor=blue, urlcolor=blue}
\usepackage[all]{xy}

\setlength{\topmargin}{-.2in}
\setlength{\oddsidemargin}{0.3in}
\setlength{\evensidemargin}{0.3in}
\setlength{\textheight}{9in}
\setlength{\textwidth}{6.1in}

\newenvironment{pf}{\proof[\proofname]}{\endproof}

\theoremstyle{plain}
\newtheorem{thm}{Theorem}[section]
\newtheorem{lem}[thm]{Lemma}

\newtheorem{prop}[thm]{Proposition}
\newtheorem{cor}[thm]{Corollary}
 \theoremstyle{definition}
\newtheorem{df}[thm]{Definition}

\newtheorem{ex}[thm]{Example}
\newtheorem{rem}[thm]{Remark}

\def\m{{\mathfrak m}}

\def\Z{{\mathbb Z}}
\def\N{{\mathbb N}}
\def\R{{\mathbb R}}

\def\Conv{{\rm conv}}
\def\Vol{{\rm Vol}}
\def\vol{{\rm vol}}

\def\Pyr{{\rm pyr}}
\def\dim{{\rm dim}\, }
\def\rank{{\rm rank}\, }

\def\eps{\varepsilon}

%%%%%%%%MATH

\usepackage{mathtools}

\newcommand{\cal}[1]{\mathcal{#1}}

\newcommand{\Q}{\mathbb Q}

\newcommand{\G}{\Gamma}

\newcommand{\D}{\Delta}

\newcommand{\cB}{\cal B}

\newcommand{\cG}{\cal G}

\newcommand{\cR}{\cal R}

\newcommand{\cl}{\operatorname{cl}}
\newcommand{\rk}{\operatorname{rank}}

%%%%%%%%%%%%%reference

\newcommand{\rs}[1]{Section~\ref{S:#1}}
\newcommand{\rl}[1]{Lemma~\ref{L:#1}}
\newcommand{\rp}[1]{Proposition~\ref{P:#1}}

\newcommand{\rr}[1]{Remark~\ref{R:#1}}
\newcommand{\rex}[1]{Example~\ref{ex:#1}}
\newcommand{\re}[1]{(\ref{e:#1})}
\newcommand{\rc}[1]{Corollary~\ref{C:#1}}
\newcommand{\rt}[1] {Theorem~\ref{T:#1}}

\newcommand{\rf}[1]{Figure~\ref{F:#1}}

\subjclass[2010]{13H15, 13D40, 52B20}
\keywords{ $j$-multiplicity, $\eps$-multiplicity,  edge ideals, hypergraphs, Newton polyhedra, co-convex bodies, free sums, edge polytopes, volumes}

% 13H15   Multiplicity theory and related topics
% 13D40 Hilbert-Samuel and Hilbert-Kunz functions; PoincarŽ series
%  52B20  Lattice polytopes (including relations with commutative algebra and algebraic geometry) 

\begin{document}
\title{Generalized multiplicities of edge ideals}

\author{Ali Alilooee}
\address{Department of Mathematics \\ Western Illinois University \\ Macomb, IL 61455}
\email{a-alilooeedolatabad@wiu.edu}

\author{Ivan Soprunov}
\address{Department of Mathematics\\ Cleveland State University\\ Cleveland, OH 44115}
\email{i.soprunov@csuohio.edu}

\author{Javid Validashti}
\address{Department of Mathematics\\ Cleveland State University \\ Cleveland, OH 44115}
 \email{j.validashti@csuohio.edu}

\begin{abstract}
We explore connections between the generalized multiplicities of square-free monomial ideals and the combinatorial structure of the underlying hypergraphs using methods of commutative algebra and polyhedral geometry. For instance, we show the $j$-multiplicity is multiplicative over the connected components of a hypergraph, and we explicitly relate the $j$-multiplicity of the edge ideal of a properly connected uniform hypergraph to the Hilbert-Samuel multiplicity of its special fiber ring. In addition, we provide general bounds for the generalized multiplicities of the edge ideals and compute these invariants for classes of uniform hypergraphs.
\end{abstract}

\maketitle

%\tableofcontents

%%%%%%%%%%%%%%%%%%%%%%%%%%%%%%%
\section{Introduction}

The theory of multiplicities is centuries old and it involves a rich interplay of ideas from various fields, including algebraic geometry, commutative algebra, convex geometry, and combinatorics. The first rigorous general algebraic treatment of multiplicities was given by Chevalley and Samuel  for zero-dimensional ideals \cite{Chevalley1, Chevalley2, Samuel1, Samuel2} and soon they became ubiquitous in commutative algebra. For instance, the Hilbert-Samuel multiplicity plays a prominent  role in the theory of integral dependence of ideals due to the influential work of Rees \cite{R1}.
Multiplicity theory has also close ties with polyhedral geometry via Ehrhart theory. 
In addition, the Hilbert-Samuel multiplicity of zero-dimensional monomial ideals  has  an elegant interpretation in convex geometry and combinatorics. Indeed, the multiplicity of a zero-dimensional monomial ideal  is equal to the normalized  full-dimensional volume of the complement of its Newton polyhedron in the positive orthant \cite{Tei}. More recently, Achilles and Manaresi introduced the concept of $j$-multiplicity \cite{AM}, and Ulrich and Validashti proposed the notion of $\eps$-multiplicity \cite{UV},  extending the  classical Hilbert-Samuel multiplicity to arbitrary ideals in a general algebraic setting. These invariants have been proven  useful in commutative algebra and algebraic geometry for their connections to the theory of integral closures and Rees valuations, the study of the associated graded algebras, intersection theory, equisingularity and local volumes of divisors \cite{FM, KV, MX, PX, UV}. Recently, Jeffries and Monta{\~n}o showed that these numbers measure certain volumes defined for arbitrary monomial ideals, similar to the zero-dimensional case \cite{JM}. Currently, there is  a rising interest in finding formulas for the $j$-multiplicity of classes of  ideals \cite{NU, JMV}. The main objective of this paper is to further understand how the $j$-multiplicity and the $\eps$-multiplicity manifest in various combinatorial structures and invariants. In particular, we consider square-free monomial ideals associated to hypergraphs, called the edge ideals,  which are not zero-dimensional, and we explore connections between the generalized multiplicities of such ideals and the combinatorial  properties of the underlying hypergraphs. It is notable that  \cite[Theorem 3.2]{JM} implies that the $j$-multiplicity of the edge ideal of a uniform hypergraph and the normalized volume of the associated edge polytope are the same up to a constant factor. Thus, the theory of $j$-multiplicity in particular provides a new perspective on the edge polytopes which may contribute to the currently limited information about these objects, and vice versa.  Geometric features of edge polytopes as well as algebraic properties and invariants of the edge ideals such as regularity, Cohen-Macaulayness, their symbolic Rees algebras and core have been studied extensively in commutative algebra and combinatorics \cite{MV, OH1998, SVV, Vill2, Vill1, V1998, Ziegler}. Our main results concerning the generalized multiplicities of the edge ideals are the following. \\

 Let $G$ be a hypergraph on $n$ nodes with edge ideal $I(G)$ and Newton polyhedron $P(G)$.  We show that
 the normalized volume is multiplicative with respect to free sums of co-convex sets (\rp{freesums}) which produces a multiplicativity formula for the $j$-multiplicity for monomial ideals (\rt{product}). In particular, if $G_1,\dots, G_c$ are the connected components of $G$, then  we obtain $j(I(G))=j(I(G_1))\cdots j(I(G_c))$ (\rp{product}), but this relation is not true for the $\eps$-multiplicity (\rr{epsilonremarks}). Assume each connected component  of $G$ is properly connected. Then we observe the analytic spread of $I(G)$ equals $n-p+c$,  where $p$ is the number of the node pivot equivalence classes of $G$ (\rp{HyperSpread}). In particular, this implies the $j$-multiplicity and the $\eps$-multiplicity of the edge ideal of $G$ are not zero if and only if the nodes in each connected component of $G$ are pivot equivalent (\rp{uniformpositivity}). In this case, we prove that $j(I(G))= m^c e(k[G])$, where $e(k[G])$ is the Hilbert-Samuel multiplicity of the edge subring $k[G]$ (\rt{cedgesubring}). 
 As an application, we obtain a formula relating the Hilbert-Samuel multiplicity of the edge subring of $G$
 to the volume of its edge polytope (\rc{edgepolytope}).
 Moreover, we note that the height of the toric edge ideal of $G$  is $e-n+p-c$, where $e$ is the number of edges in $G$ (\rp{heighttoric}). As an application we obtain  the following when $j(I(G))$ is not zero:  If $e=n$ then $j(I(G))= m^c$(\rp{muniformunicyclic}), and  if $e=n+1$ then $j(I(G))=m^cl$, where $l$ is half the length of the unique nontrivial minimal monomial walk in $G$ up to equivalence (\rp{bicyclic}).  We also prove  $j(I(G))$ is greater than or equal to $j(I(H))$   for any subhypergraph $H$ of $G$, provided $j(I(G))$ is not zero (\rt{inequality}), and equality holds when $H$ is obtained from $G$ by removing a free node (\rp{whisker}).
These statements fail to be true for the $\eps$-multiplicity (\rr{epsilonremarks}).  As a corollary we conclude $j(I(G))$ is bounded above the $j$-multiplicity of the complete $m$-uniform hypergraph on $n$ nodes as in \rex{CompleteUniform}. In particular, if $G$ is a simple graph on $n$ nodes such that $j(I(G))$ is not zero, then $j(I(G))$ is between $2^{\tau_0}$ and $2^n -2n$, where $\tau_0$ is the odd tulgeity of $G$ (\rc{bounds}). In addition, we show that if $G$ is an odd cycle of length $n$, then $\eps(I(G))=\frac{2}{n+1}$ (\rp{epsiloncycle}) and we compute the $\eps$-multiplicity of the edge ideals of complete $m$-uniform hypergraphs (\rp{epsiloncomplete}). Throughout  the paper, we develop results from the perspective of both commutative algebra and polyhedral geometry which reveals a beautiful interaction of ideas between the two approaches.
\\

The paper is organized as follows. 
In \rs{jmult} we review the notion of $j$-multiplicity in a general algebraic setting.
In \rs{jmultVolume} we recall the  connection between the $j$-multiplicity of monomial ideals and the associated  polytopes. 
In \rs{sectionfreesum} we describe a connection between  the $j$-multiplicity and the free sum of co-convex sets
and prove the multiplicativity of the  $j$-multiplicity of edge ideals over the connected components. 
In \rs{edgeidealvolumes} we  further explore the $j$-multiplicity of edge ideals via volumes.
In \rs{pecas} we give a formula for the analytic spread of edge ideals and we obtain a combinatorial characterization of the vanishing of their $j$-multiplicity and $\eps$-multiplicity using pivot equivalence relation.
In \rs{edgeidealedgesubring} we study the relation between the $j$-multiplicity of the edge ideal of a hypergraph and the associated edge subring.
In \rs{toricedgeideal} we use toric edge ideals to obtain a formula for the $j$-multiplicity of the edge ideal of classes of hypergraphs.
In \rs{inequalityedgeideal} we provide general bounds for the $j$-multiplicity of edge ideals.
In \rs{epsilonedgeideal} we compute the $\eps$-multiplicity of the edge ideals of cycles and complete hypergraphs.

%%%%%%%%%%%%%%%%%%%%%%%%%%%%%%%
\section{The $j$-multiplicity}\label{S:jmult}

Let  $R$ be a Noetherian local ring with  maximal ideal $\m$ and Krull dimension $n$. We recall the notion of $j$-multiplicity $j(I)$ of an ideal $I$ in $R$ as introduced and developed in \cite[6.1]{FOV} and \cite{AM}. Let $S$ be a standard graded Noetherian $R$-algebra, that is, a graded $R$-algebra with $S_{0}=R$ and generated by finitely many homogeneous elements of degree one.  Then $\Gamma_{\m}(S) \subset S$ is a graded ideal in $S$, where $\Gamma_{\m}$ denotes the zeroth local cohomology with respect to the ideal $\m$ of $R$. In particular, $\Gamma_{\m} (S)$ is finitely generated over $S$. Thus there exists a fixed power ${\m}^t$ of $\m$ that annihilates $\Gamma_{\m} (S)$ . Therefore $\Gamma_{\m}(S)$ is a finitely generated graded module over $S/{\m}^tS$, which is a standard graded Noetherian algebra over the
Artinian local ring $R/{\m}^t$. Hence $\Gamma_{\m}(S)$ has a Hilbert function that is eventually polynomial of degree at most $\dim S -1$, whose normalized leading coefficient is the Hilbert-Samuel multiplicity $e(\Gamma_{\m}(S))$. We define the $j$-multiplicity $j(S)$  to be $e(\Gamma_{\m}(S))$ when $\dim \Gamma_{\m}(S) = \dim S$ and zero otherwise. If $S_k$ is the graded component of $S$ of degree $k$ and $\lambda$ denotes the length, we may write
$$ 
j(S)= (\dim  S-1)! \lim_{k \to \infty}  \frac{\lambda _R(\Gamma_{\m}(S_k))}{k^{\dim S-1}}.
$$
If the graded components of $S$ have finite length, then   $j(S)$ is the same as the Hilbert-Samuel multiplicity $e(S)$.
In addition, one can see that  the condition $\dim \Gamma_{\m} (S) < \dim S$ is equivalent to $\dim S/ \m S < \dim S$. Therefore, one has 
\begin{rem}\label{R:gvanishing}
$j(S)=0$ if and only if $\dim  S/ \m S <  \dim  S$. 
\end{rem}

%\begin{rem}\label{R:gradedassociativity}
%Note that by the associativity formula for the Hilbert-Samuel multiplicity applied to the graded module $\Gamma_{\m}(S)$ over the standard graded algebra $S$, %$$e(\Gamma_{\m}(S))= \sum \lambda ((\Gamma_{\m}(S))_P) \cdot e(S/P), $$ where the sum runs over all minimal primes $P$ in the support of $\Gamma_{\m}(S)$ of maximal dimension. If $j(S)$ is not zero, that is if  $\dim \Gamma_{\m}(S)= \dim S = n$, then one may observe that  $(\Gamma_{\m}(S))_P=S_P$  for all minimal primes $P$ in the support of $\Gamma_{\m}(S)$ of maximal dimension. Therefore, we may write $$j(S)=\sum \lambda (S_P) \cdot e(S/P),$$ where the sum runs over all minimal primes $P$ in $S$ of dimension $n$. \end{rem}

Recall that  the associated graded ring  of $R$ with respect to an ideal $I$, which we denote by $\cG$,  is a standard graded Noetherian $R/I$-algebra of dimension $n$. Then, the  $j$-multiplicity $j(I)$ is defined as the $j$-multiplicity of the  graded ring $\cG$. In terms of the length of the graded components of $\Gamma_{\m}(\cG)$ we may write
$$ 
j(I)= (n-1)! \lim_{k \to \infty}  \frac{\lambda _R(\Gamma_{\m}(I^k/I^{k+1}))}{k^{n-1}}.
$$
If $I$ is $\m$-primary, then the graded components of the associated graded ring of $R$ with respect to $I$ have finite length, and  $j(I)$ is indeed the Hilbert-Samuel multiplicity $e(I)$.
Moreover, $j(I)=0$ if and only if $\dim  \cG/ \m \cG <  \dim  \cG=n$ by \rr{gvanishing}. The dimension of  the special fiber ring $\cG/ \m \cG $ is denoted by $\ell(I)$ and is called the analytic spread of $I$. Thus, we have
\begin{rem}\label{R:jvanishing}
$j(I)=0$ if and only if $\ell(I)< n$.
\end{rem}
We refer the reader to \cite{FOV} for further properties of $j$-multiplicities, and to  \cite{BH} for unexplained terminology.

%%%%%%%%%%%%%%%%%%%%%%%%%%%%%%%
\section{The $j$-multiplicity of monomial ideals and volumes}\label{S:jmultVolume}

We begin with recalling some definitions and notation from convex geometry related to monomial ideals. 
Consider the integer lattice $\Z^n$ in $\R^n$. A {\it lattice polytope} $F$ in $\R^n$
is the convex hull of finitely many lattice points. A {\it unimodular $n$-simplex} is the convex hull of
$n+1$ lattice points $\{v_0,v_1,\dots, v_n\}$ such that $\{v_1-v_0,\dots, v_n-v_0\}$ is
a basis for the lattice. 
We use $\Vol_n$ to denote the {\it normalized $n$-dimensional volume} in $\R^n$ defined such
that $\Vol_n(\D)=1$ for any unimodular $n$-simplex $\D$. Then for any lattice polytope $F$ we have
$\Vol_n(F)=n!\vol_n(F)$, where $\vol_n$ is the usual Euclidean volume in~$\R^n$. Similarly,
we can define the normalized $k$-dimensional volume with respect to any sublattice in $\Z^n$
of rank $k$. We will be concerned with the following particular situation. Suppose $F$ is a lattice polytope lying in a rational affine hyperplane 
$$L=\{z\in\R^n \ | \ \langle u,z\rangle=b\},$$
where $b\in\Z$, $b\geq 0$, and $u=(u_1,\dots,u_n)$ is a primitive integer vector, that is $\gcd(u_1,\dots, u_n)=1$. We use  $\langle u,z\rangle$ to denote the inner product of $u$ and $z$ in $\R^n$.
Then we write $\Vol_{n-1}(F)$ to denote the normalized  $(n-1)$-dimensional volume with respect to the sublattice $L\cap\Z^n\subset\Z^n$.  Note that the integer $b$ is the {\it lattice distance} from  $L$ to the origin. For a lattice polytope $F\subset\R^n$ of dimension at most $n-1$, we write $\Pyr(F)$ for  the convex hull of $F$ and the origin, 
which we call the pyramid over~$F$. 
Clearly, $\Vol_n(\Pyr(F))=0$ if $\dim F$ is less than $n-1$. When $\dim F=n-1$ we have
the following formula  which is  standard in lattice geometry:
\begin{equation}\label{e:pyrvolume}
\Vol_n(\Pyr(F))=h(F)\Vol_{n-1}(F),
\end{equation}
where $h(F)$ is the lattice distance from the affine span of $F$ to the origin. More generally,
let $a\in\Q^n$ be such that $\langle u,a\rangle\leq b$. Then the convex hull $\Pyr_a(F)$ of $a$ and $F$ is the
pyramid over $F$ with apex $a$ and lattice height $h(F)-\langle u,a\rangle$. Therefore we obtain
\begin{equation}\label{e:pyrvolume-apex}
\Vol_n(\Pyr_a(F))=(h(F)-\langle u,a\rangle)\Vol_{n-1}(F).
\end{equation} 
Here $h(F)-\langle u,a\rangle$ is the {\it lattice distance} from the affine span of $F$ to $a$.

Now let  $I$ be a monomial ideal in  $R=k[x_1,\dots, x_n]_{(x_1,\dots, x_n)}$.  
The {\it Newton polytope $F(I)$} is the convex hull in $\R^n$ of the exponent vectors of the minimal generators of~$I$, and the {\it Newton polyhedron $P(I)$} is the convex hull in $\R^n$ of the exponent vectors of all monomials in $I$. 
The following result due to Jeffries and Monta{\~n}o  \cite[Theorem 3.2]{JM} relates the $j$-multiplicity of a monomial ideal to the underlying Newton polyhedron.
\begin{thm}\label{T:JonathanJack}
Let $I$ be a monomial ideal and $F_1,\dots, F_k$ be the compact facets of $P(I)$. 
Then $$j(I)= \sum_{j=1}^k\Vol_n(\Pyr(F_j)) =  \sum_{j=1}^k h(F_j) \Vol_{n-1}(F_j),$$
where $h(F_j)$ is the lattice distance from the affine span of $F_j$ to the origin.
\end{thm}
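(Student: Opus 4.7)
The second equality is immediate from \re{pyrvolume}, so the substance is the first equality. My plan is to unwind the definition of $j(I)$ combinatorially, identify the resulting lattice-counting region as a union of pyramid frustums, and apply Ehrhart asymptotics.

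Starting from $j(I)=(n-1)!\lim_{k\to\infty}k^{-(n-1)}\lambda_R(\Gamma_\m(I^k/I^{k+1}))$, since $I$ is a monomial ideal, $I^\ell$ corresponds to $\N^n\cap\ell P(I)$ and $I^k/I^{k+1}$ has a basis indexed by $\N^n\cap(kP(I)\setminus(k+1)P(I))$. The class $x^\alpha$ lies in $\Gamma_\m$ if and only if for each $i$ there is $N$ with $\alpha+Ne_i\in(k+1)P(I)$; writing $\pi_i\colon\R^n\to\R^{n-1}$ for the projection that drops the $i$-th coordinate, this is $\pi_i(\alpha)\in\pi_i((k+1)P(I))$ for every $i$. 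Setting $\Omega=\R^n_{\geq0}\cap\bigcap_i\pi_i^{-1}(\pi_i(P(I)))$ and $B=\Omega\setminus P(I)$, the basis of $\Gamma_\m(I^k/I^{k+1})$ is put in bijection with $\N^n\cap(k+1)B\cap kP(I)$.

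The polyhedral heart of the argument is the identity $B=\bigcup_j\Pyr(F_j)$, with pairwise-disjoint interiors: a facet of $P(I)$ is compact exactly when its primitive inward normal has all entries strictly positive, and for such $F_j$ the open cone from the origin to $F_j$ lies in $\R^n_{\geq0}\setminus P(I)$. Parametrizing a point of $(k+1)\Pyr(F_j)$ as $\alpha=sq$ with $q\in F_j$ and $s\in[0,k+1]$, the facet inequalities of $P(I)$ make the conditions $\alpha\in kP(I)$ and $\alpha\in(k+1)P(I)$ equivalent to $s\geq k$ and $s\geq k+1$ respectively, so the lattice points of $(k+1)B\cap kP(I)$ lying in $\Pyr(F_j)$ form the half-open shell $\{sq:q\in F_j,\ s\in[k,k+1)\}\cap\N^n$; shells from distinct facets overlap only along $(n-1)$-dimensional pyramids over the ridges $F_j\cap F_{j'}$.

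For each $j$, the Ehrhart polynomial of the lattice polytope $\Pyr(F_j)$ yields
$$|(k+1)\Pyr(F_j)\cap\N^n|-|k\Pyr(F_j)\cap\N^n|=n\,\vol_n(\Pyr(F_j))\,k^{n-1}+O(k^{n-2}),$$
while passing from the closed to the half-open shell changes the count by $|kF_j\cap\N^n|-|(k+1)F_j\cap\N^n|=O(k^{n-2})$ and the inter-pyramid overlaps contribute $O(k^{n-2})$ via Ehrhart on the $(n-1)$-dimensional boundary pyramids. Summing over $j$ and using $\Vol_n=n!\,\vol_n$ gives
$$j(I)=(n-1)!\cdot n\sum_j\vol_n(\Pyr(F_j))=\sum_j\Vol_n(\Pyr(F_j)),$$
as required. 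The main obstacle is the polyhedral bookkeeping in the previous paragraph — verifying the decomposition $B=\bigcup_j\Pyr(F_j)$ and the precise identification of $(k+1)B\cap kP(I)$ with a union of half-open frustums with controlled overlaps — while the Ehrhart asymptotic is routine once the geometry is pinned down.
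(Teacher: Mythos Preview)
Your approach is quite different from the paper's. The paper does not prove the theorem in full generality; it only gives a new proof in the special case $I=wJ$ with $w$ a monomial and $J$ zero-dimensional, by combining the identity $j(wJ)=e(J)+e(J\bar R)$ (from \cite{KV}) with the associativity formula and Teissier's volume formula for $e(J)$, and then matching the resulting expression to $\Vol_n(c(P(I)))$ via a projection lemma. You instead attempt a direct lattice-point argument for arbitrary monomial ideals, in the spirit of the original Jeffries--Monta\~no proof.

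Unfortunately your argument has a genuine gap. The claimed identity $B=\bigcup_j\Pyr(F_j)$ is false. In the paper's notation your $\Omega$ is $\hat P(I)$ and the closure of your $B$ is $\hat F(I)$; by Theorem~\ref{T:eps-volume} its normalized volume is $\eps(I)$, not $j(I)$. For a concrete counterexample take $I=(x^2,xy)$ in $k[x,y]_{(x,y)}$. The unique compact facet $F$ is the segment from $(2,0)$ to $(1,1)$, so $\Pyr(F)$ is the triangle on $(0,0),(2,0),(1,1)$ with $\Vol_2=2=j(I)$. But $\pi_2(P(I))=\{a\geq 1\}$, so $\Omega=\{a\geq 1,\,b\geq 0\}$ and $B$ is the triangle on $(1,0),(2,0),(1,1)$ with $\Vol_2=1=\eps(I)$. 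In particular $0\notin\Omega$ whenever some variable lies outside $\sqrt I$, so the pyramids $\Pyr(F_j)$ are not even contained in~$B$.

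Your identification of the lattice points counted by $\lambda(\Gamma_\m(I^k/I^{k+1}))$ with $\N^n\cap kP(I)\cap(k+1)B$ is essentially right (modulo the integral-closure slip: $\N^n\cap\ell P(I)$ gives $\overline{I^\ell}$, not $I^\ell$, which requires a separate $O(k^{n-2})$ estimate). Likewise your parametrization correctly shows that inside the cone over $F_j$ the set $kP(I)\setminus(k+1)P(I)$ is the shell $\{sq:q\in F_j,\ k\leq s<k+1\}$. What is missing is a valid bridge between the two: you must show directly that $kP(I)\cap(k+1)B$ and the union of shells differ by at most $O(k^{n-2})$ lattice points, and this does not follow from the false identity $B=\bigcup_j\Pyr(F_j)$.
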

Recall that by  \rr{jvanishing}, $j(I)=0$ if and only if $\ell(I)$ is less than $n$. On the other hand,
by a result of Bivi{\`a}-Ausina \cite{AB}, the analytic spread of $I$ is  the maximum of the dimensions of the compact faces of $P(I)$ plus one. Therefore, we obtain
\begin{rem}\label{R:Bivia}
$j(I)=0$ if and only if all compact faces of $P(I)$ have dimension less than $n-1$, that is $P(I)$ has no compact facets.
\end{rem}

\begin{ex}\label{ex:CompleteUniform}
Let $I$ be the ideal generated by all square-free monomials of degree $m$ in $R$. Then, the Newton polytope of $I$ is the convex hull of all vectors in $\R^n$ with exactly $m$ entries being $1$ and the rest $0$. Therefore,  $I$ corresponds to a hypersimplex of type $(m, n)$  lying in the hyperplane $z_1+\cdots+z_n=m$. It is classical that ${\Vol_{n-1}}(F(I))$ equals the  Eulerian number $A(n-1,m)$. 
Therefore, by \rt{JonathanJack} we obtain a closed formula
$$
j(I)= m \cdot A(n-1,m) = m \cdot \left( \sum_{k=0}^{m} (-1)^k{n \choose k} (m-k)^{n-1} \right).
$$
For instance, if $m=2$ then $j(I)=2^n -2n$, and if $m=n-1$ then $j(I)=n-1$. Note that $j(I)=0$
if and only if $m=n$.

\end{ex}

Below we provide a simple proof of \rt{JonathanJack} when $I$ is a monomial ideal of the form $wJ$, where $w$ is a monomial and $J$ is a zero-dimensional monomial ideal in $R$, using the volume interpretation of the Hilbert-Samuel multiplicity of  zero-dimensional monomial ideals due to Teissier \cite{Tei}. Note that all monomial ideals of a polynomial ring in two variables are of form $wJ$ as above.
\begin{proof}
First note that by Theorem \cite[3.12]{KV}, $j(I) = j(wJ) = e(J) + e(J\bar{R})$, where $\bar{R}=R/(w)$.
Write $w$ as  $x_1^{a_1} \cdots x_n^{a_n}$. By the associativity formula for the Hilbert-Samuel multiplicity,
$$
e(J\bar{R}) = \sum_{i=1}^n \lambda ((\bar{R})_{(x_i)})\cdot e(J(\bar{R}/x_i\bar{R})) =\sum_{i=1}^n a_i \cdot e(J{R}_i)
$$
where ${R}_i=R/(x_i)$. Hence we obtain
\begin{equation}\label{e:associativity}
j(I) = e(J) +  \sum_{i=1}^n a_i \cdot e(J{R}_i).
\end{equation}
For a polyhedron $P$ denote by $c(P)$ the union of the pyramids over the compact faces of $P$.
Using Teissier's result for the zero-dimensional ideal $J$  we have $e(J)=\Vol_n(c({P(J)}))$.
For $i=1, \ldots, n$, let $P_i$ be the facet of $P(J)$ with the inner normal vector $e_i$.
Then $P_i$ is the Newton polyhedron of the zero-dimensional ideal $J{R}_i$ and, hence,  $e(J{R}_i)=\Vol_{n-1}(c({P_i}))$, again by Teissier's result. Therefore,
$$
j(I) = \Vol_n(c({P(J)})) +  \sum_{i=1}^n a_i \Vol_{n-1}(c({P_i})).
$$
We claim that the latter equals $\Vol_n(c({P(I)}))$. Note that
$P(I)=P(J)+a$, where $a=(a_1,\dots, a_n)$ as above. Let  $F_j$ be the compact facets of $P(J)$ 
with primitive inner normals $\eta_j\in\Z^n$, for $1\leq j\leq k$. As the compact facets of $P(I)$ are translates of the $F_j$ we have
\begin{equation}\label{e:volumes}
\Vol_n(c({P(I)}))=\sum_{j=1}^k\min_{u\in P(I)}\langle u,\eta_j\rangle\Vol_{n-1}(F_j)=
\sum_{j=1}^k\min_{u\in P(J)}\langle u,\eta_j\rangle\Vol_{n-1}(F_j)+\sum_{j=1}^k\langle a,\eta_j\rangle\Vol_{n-1}(F_j).
\end{equation}
The first summand in the right hand side of \re{volumes} equals $\Vol_n(c({P(J)}))$. For the second summand we have
\begin{equation}\label{e:volumes2}
\sum_{j=1}^k\langle a,\eta_j\rangle\Vol_{n-1}(F_j)=\sum_{i=1}^na_i\sum_{j=1}^k\langle e_i,\eta_j\rangle\Vol_{n-1}(F_j).
\end{equation}
%But $\langle e_i,\eta_j\rangle\Vol_{n-1}(F_j)$ is the $(n-1)$-dimensional volume of the projection of the face $F_j$ onto $L_i$. 
\rl{projection} below implies
that the projection of the union of the $F_j$ onto $L_i$ gives a polyhedral subdivision of
$c({P_i})$. As the projection of $F_j$ onto $L_i$ has volume 
$\langle e_i,\eta_j\rangle\Vol_{n-1}(F_j)$, we get
$$\Vol_{n-1}(c({P_i}))=\sum_{j=1}^k\langle e_i,\eta_j\rangle\Vol_{n-1}(F_j).$$
Combining this with \re{volumes2} and \re{volumes} we obtain
$$\Vol_n(c({P(I)}))=\Vol_n(c({P(J)})) +  \sum_{i=1}^n a_i \Vol_{n-1}(c({P_i})),$$
as claimed.

%Note that $P(I)$ is a translation of $P(J)$ along the vector $v=(a_1, \ldots, a_n)$, hence the same is true about their faces. 
%We subdivide $\tilde{P}(I)$ into two parts. The first part is the translation $v+\overline{P(J)}$ and the second part is
%the union of the pyramids over the $(n-1)$-dimensional pyramids in $v+\overline{P_i(J)}$ for all $i=1, \ldots, n$.
%The normalized volume of the first part equals $\Vol_n(\overline{P(J)})=e(J)$,
%and the normalized volume of the second part equals the sum
%$$\sum_{i=1}^n a_i \Vol_{n-1}(\overline{P_i(J)})=\sum_{i=1}^n a_i \cdot e(J{R}_i).$$
%Therefore, by \re{associativity},
%$$j(I)=\Vol_n(\overline{P(I)}),$$
%as required.
%
%$$ \Vol_n(v+U(J)) + \sum_{i=1}^n \Vol_n(\Pyr(P_i))=\Vol_n(U(I))=\sum_{i=1}^k\Vol_n(\Pyr(F_i)).$$
%Using Teissier's result for the zero-dimensional ideal $J$  we have $$e(J)=\Vol_n(U(J))=\Vol_n(v+U(J)).$$ 
%Also note that $J\bar{R}_i$ is a zero-dimensional ideal in the polynomial ring ${R}_i$. Therefore, by Teissier's result $e(J\bar{R}_i)=\Vol_{n-1}(P_i)$.
%Hence,
%$a_i \cdot e(J\bar{R}_i)=a_i \cdot \Vol_{n-1}(P_i)= \Vol_n(\Pyr(P_i))$. Thus,
%$$j(I) = e(J) +  \sum_{i=1}^d a_i \cdot e(J\bar{R}_i) = \Vol_n(v+U(J)) + \sum_{i=1}^d \Vol_n(\Pyr(P_i))=\sum_{i=1}^k\Vol_n(\Pyr(F_i)). $$
%
% compact faces of $P(I)$ are translations of $F_1,\dots, F_k$ along the vector $(a_1, \ldots, a_d)$. 
\end{proof}

\begin{lem}\label{L:projection}
Let $P$ be a polyhedron in the $n$-orthant $\R^n_{\geq 0}$
whose complement $\R^n_{\geq 0}\setminus P$ is bounded. Let $L_i=\{z\in \R^n\ |\ z_i=0\}$
be a coordinate hyperplane. Then the projection $\pi_i:\R^n\to L_i$
gives a bijection between the union of  the compact
facets of $P$ and the closure of the complement of $P\cap L_i$ in the 
$(n-1)$-orthant $\R^n_{\geq 0}\cap L_i$.
\end{lem}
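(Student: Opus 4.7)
The plan is to construct an explicit inverse of $\pi_i$ on $\bigcup_j F_j$ by sending each $q$ in the closure of the complement to the first intersection of the ray $q+\R_{\geq 0}e_i$ with $P$. I first observe that, since $\R^n_{\geq 0}\setminus P$ is bounded, the recession cone of $P$ is exactly $\R^n_{\geq 0}$: every inner facet normal $\eta$ therefore lies in $\R^n_{\geq 0}$, and a facet with normal $\eta$ is compact if and only if every $\eta_j>0$ (a zero entry $\eta_j$ places the ray $e_j$ in the facet's recession cone).

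Injectivity is immediate: if $p_1,p_2\in\bigcup_j F_j$ and $\pi_i(p_1)=\pi_i(p_2)$, then $p_2=p_1+se_i$ for some $s\geq 0$, and if $s>0$ then the inner normal $\eta$ of a compact facet through $p_2$ (necessarily with $\eta_i>0$) gives $\langle\eta,p_1\rangle=\langle\eta,p_2\rangle-s\eta_i<\langle\eta,p_2\rangle$, contradicting $p_1\in P$. Verifying that $\pi_i(F_j)$ lies in the closure of the complement is straightforward from the identity $\langle\eta,\pi_i(p)\rangle=\langle\eta,p\rangle-p_i\eta_i$: if $p_i>0$ then $\pi_i(p)\notin P$, while if $p_i=0$ then $\pi_i(p)=p$ and perturbing along any $e_k$ with $k\neq i$, $p_k>0$, produces complement points converging to $p$.

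The main technical point is surjectivity. Given $q\in(\R^n_{\geq 0}\cap L_i)\setminus P$, set $t^*=\inf\{t\geq 0:q+te_i\in P\}$; this infimum is finite by boundedness of the complement and positive since $q\notin P$, so $p=q+t^*e_i\in\partial P$ projects to $q$. Since $p-\eps e_i\notin P$ for small $\eps>0$, some facet $F$ at $p$ has $\eta_i>0$, and I claim such an $F$ must be compact. Otherwise some $\eta_j=0$ with $j\neq i$, so $F$ contains the ray $p+\R_{\geq 0}e_j$; writing $c=\langle\eta,p\rangle$, we compute $\langle\eta,q+se_j\rangle=c-t^*\eta_i<c$ for every $s\geq 0$, so $\{q+se_j:s\geq 0\}$ is an unbounded ray inside $(\R^n_{\geq 0}\cap L_i)\setminus P\subseteq\R^n_{\geq 0}\setminus P$, contradicting the hypothesis. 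The remaining points $q\in P\cap L_i$ that still lie in the closure of the complement are recovered as limits of the previous construction, using that $\bigcup_j F_j$ is a finite union of closed sets and hence closed.
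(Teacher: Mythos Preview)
Your proof is correct and mirrors the paper's strategy: establish injectivity via the inner normal with $\eta_i>0$, lift complement points along the $e_i$-ray to the first point of $\partial P$, and close up at the end. The one slip is in your last sentence, where you need $\bigcup_j F_j$ to be \emph{compact} (not merely closed) so that its $\pi_i$-image is closed and hence contains the full closure $c(P')$; since the $F_j$ are compact facets this is immediate. Your unbounded-ray argument that the facet at the lifted point must be compact is a mild variant of the paper's device: the paper instead restricts first to interior points of $c(P')$, so the lifted point has all coordinates positive, and then invokes the identification $\mathcal F=\cl(\partial P\cap\R^n_{>0})$.
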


\begin{proof} 
First note that the non-compact facets of $P$ are precisely the intersections $P\cap L_i$ for $1\leq i\leq n$. This implies that the union of the compact facets $\cal F$ of $P$ equals the closure of 
$\partial P\cap \R^n_{>0}$.  
In addition, the inner normals of the compact facets of $P$ have all their coordinates positive. 
To simplify notation we assume $i=n$ and let 
$P'=P\cap L_n$ and $c({P'})$ be the closure of the complement of $P'$ in 
$\R^n_{\geq 0}\cap L_n$.

First we check that  $\pi_n$ restricted to ${\cal F}$ is one-to-one. Indeed,
suppose $a_1=(a',t_1)$ and $a_2=(a',t_2)$ lie in ${\cal F}$
for some $(a',0)\in L_n$ and $t_1,t_2\geq 0$ and assume $t_1\leq t_2$.
%Since $a_1\in P$, the entire ray $\{(a',t)\ |\ t\geq t_1\}$ lies in $P$. 
Let $\eta$ be an inner normal to a facet containing $a_2$. Then $\langle \eta,z\rangle$ attains its minimum on $P$ at $z=a_2$, but since $a_1\in P$ and $\eta_n>0$ we must have $t_2\leq t_1$. Therefore, $t_1=t_2$
and so $a_1=a_2$.

Now we show that $\pi_n({\cal F})=c({P'})$.
Let $a_0=(a',0)$ be an interior point of $c({P'})$ (relative to $L_n$) and thus $a_0\not\in P$.
Since $\R^n_{\geq 0}\setminus P$ is bounded, 
$(a',t)\in P$ for $t\gg 0$. Since $P$ is closed, there exists the smallest
value of $t>0$ such that $a=(a',t)$ lies in $P$ and, hence, in the boundary of $P$. Thus, $a$ lies in a compact facet of $P$, as all coordinates of $a$ are positive. 
Therefore the interior of $c({P'})$ is contained in $\pi_n({\cal F})$. 
Since ${\cal F}$ is closed, by continuity, $c({P'})\subseteq\pi_n({\cal F})$.
Finally, if $\pi_n(a)=(a',0)\in P'$ for some $a=(a',t_1)\in{\cal F}$
then the entire ray $\{(a',t)\ |\ t\geq 0\}$ lies in $P$. By the same argument as in the previous paragraph
we must have $t_1\leq 0$, thus $t_1=0$. In other words, $\pi_n(a)=a$ lies in the boundary of $P'$.
Therefore,  $\pi_n({\cal F})\subseteq c({P'})$.

\end{proof}

%%%%%%%%%%%%%%%%%%%%%%%%%%%%%%%
\section{The $j$-multiplicity of monomial ideals and free sums}\label{S:sectionfreesum}

In this section we observe that if $I$ is a sum of monomial ideals  whose sets of minimal monomial generators
involve pairwise disjoint collections of variables, then the $j$-multiplicity of $I$ is the product of the $j$-multiplicities of the summands, see \rt{product}.
The combinatorial counterpart here is the  free sum of co-convex bodies.

Recall the notion of a co-convex body. Let $C\subset \R^n$ be a closed convex cone with non-empty interior which does not contain non-trivial linear subspaces.
Let $P\subset C$ be a convex set such that $C\setminus P$ is bounded. Then the closure of $C\setminus P$, denoted by $c(P)$, is called a {\it co-convex} body. Furthermore, let $F(P)=c(P)\cap P$ which is the union of the bounded faces of $P$.
For example, let $F(I)$ be the Newton polytope and $P(I)$ be the Newton polyhedron of a monomial ideal $I$ in  $R=k[x_1,\dots, x_n]_{(x_1,\dots, x_n)}$. Let $C$ be the cone over $F(I)$ and $P=P(I)\cap C$. Then the co-convex body $c(P)$
is the union of pyramids over the bounded faces of $P(I)$. Its normalized volume equals the $j$-multiplicity of the ideal $I$
\begin{equation}\label{e:coconvex}
j(I)=\Vol_n(c(P)),
\end{equation}
according to \rt{JonathanJack}.

\begin{df} Let $P_i\subset C_i\subset \R^{n_i}$, for $i=1,2$, be convex sets contained
in convex cones as above and $K_i=c(P_i)$ the corresponding co-convex bodies. Define
the {\it free sum}  $P_1\oplus P_2$ to be the convex hull of the union 
$(P_1\times\{0\})\cup(\{0\}\times P_2)$ in $\R^{n_1}\times \R^{n_2}$. The closure of the complement of
$P_1\oplus P_2$ in $C_1\times C_2$ is called the {\it free sum} of the co-convex bodies
$K_1$ and $K_2$, and is denoted by $K_1\oplus K_2$.
\end{df}

%\begin{df} Let $P\subset\R^k$ and $Q\subset\R^l$ be convex sets containing the origin. 
%%polytopes in $\R^n$ such that $0\in P\cap Q$ and 
%%the affine spans of $P$ and $Q$ are orthogonal. 
%Then the convex set $P\oplus Q=\conv(P\times{0}\cup {0}\times Q)$ in $\R^k\times\R^l$
%is called the {\it free sum of $P$ and $Q$}. 
%\end{df}

\begin{ex}\label{ex:simplex} 
Let $\D_1$ be an $n_1$-simplex generated by integer vectors $v_1,\dots, v_{n_1}$ in $\R^{n_1}$
and $\D_2$ be an $n_2$-simplex generated by integer vectors $w_1,\dots, w_{n_2}$ in 
$\R^{n_2}$ and let $n=n_1+n_2$. Then $\D_1\oplus \D_2$ is the $n$-simplex generated by $(v_1,0),\dots, (v_{n_1},0),(0,w_1),\dots, (0,w_{n_2})$. Moreover, the normalized volumes of $\D_1$, $\D_2$, and $\D_1\oplus \D_2$ satisfy
$$\Vol_n(\D_1\oplus \D_2)=\Vol_k(\D_1)\Vol_l(\D_2).$$
Indeed, the volume on the left equals the absolute value of the determinant of the block matrix
with blocks corresponding to the two sets of vectors. 
\end{ex}

The above property about normalized volumes extends to free sums of arbitrary convex sets
containing the origin, as well as to co-convex bodies.
For convex centrally symmetric bodies this follows from \cite[p. 15]{RyaZva} but the argument can be  adapted  to the case of co-convex bodies as sketched  below. A different proof for convex sets containing the origin was found by T.~McAllister (private communication).

%Let $K\subset\R^n$ be a convex set containing $0$. By $c(K)$ we denote the cone of $K$, that is 
%$$c(K)=\{x\in\R^n \ |\ x\in rK\text{ for some }r>0\}.$$
Let $K=c(P)\subset C$ be a co-convex body. The {\it Minkowski functional} of $K$ is defined on $C$ by 
$$|x|_K=\inf\{r\geq 0\ |\ x\in rK\}.$$
%It is easy to see that $|x|_K$ satisfies the triangle inequality $|x+y|_K\leq|x|_K+|y|_K$.
Note that $K$ is the set of those $x\in C$ with $|x|_K\leq 1$ and 
$F(P)$ is the set of  $x\in C$ with $|x|_K=1$. Furthermore, for any $r\geq 0$,
the dilation $rF(P)$ is the set of $x\in C$ with $|x|_K=r$.

\begin{lem}\label{L:additive}
Let $K_1\oplus K_2$ be a free sum of co-convex sets 
$K_i=c(P_i)\subset C_i\subset \R^{n_i}$, for $i=1,2$. Then
\begin{enumerate}
\item[(a)] $F(P_1\oplus P_2)=\{((1-t)p_1,tp_2)\in C_1\times C_2 \ |\ p_i\in F(P_i), 0\leq t\leq 1\}$,
\item[(b)] $|x|_{K_1\oplus K_2}=|x_1|_{K_1}+|x_2|_{K_2}\,$ 
for any $x=(x_1,x_2)\in C_1\times C_2$.
\end{enumerate}
\end{lem}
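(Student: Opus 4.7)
The plan is to establish part (a) via a supporting-hyperplane analysis of the bounded faces of $P_1\oplus P_2$, and then derive (b) from (a) using positive homogeneity of the Minkowski functional.

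For the inclusion $\supseteq$ in (a), let $p_i\in F(P_i)$. Each $p_i$ lies on a bounded face of $P_i$, and since $C_i\setminus P_i$ is bounded, such a face is cut out by some supporting hyperplane with inner normal $\eta_i\in\mathrm{int}(C_i^*)$. After rescaling I may assume $\langle\eta_i,p_i\rangle=\min_{y\in P_i}\langle\eta_i,y\rangle=1$. Then $\eta:=(\eta_1,\eta_2)$ lies in $\mathrm{int}(C_1^*)\times\mathrm{int}(C_2^*)$, and for every $z=((1-s)y_1,\,sy_2)\in P_1\oplus P_2$ one has $\langle\eta,z\rangle=(1-s)\langle\eta_1,y_1\rangle+s\langle\eta_2,y_2\rangle\geq 1$, with equality attained at $((1-t)p_1,tp_2)$. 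So this point lies on a bounded face of $P_1\oplus P_2$.

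For the reverse inclusion $\subseteq$, let $x\in F(P_1\oplus P_2)$ lie on a bounded face $G$, and pick a supporting normal $\eta=(\eta_1,\eta_2)$ for $G$. One first checks that $\eta_i\in\mathrm{int}(C_i^*)$, since otherwise $G$ would contain a ray of $C_1\times C_2$ and fail to be bounded. Setting $m_i=\min_{y\in P_i}\langle\eta_i,y\rangle$, attained on a bounded face $F_{\eta_i}\subseteq F(P_i)$, the parametrization $z=((1-s)y_1,sy_2)$ of $P_1\oplus P_2$ yields $\min_{P_1\oplus P_2}\langle\eta,\cdot\rangle=\min(m_1,m_2)$. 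A short case analysis according to whether $m_1<m_2$, $m_1>m_2$, or $m_1=m_2$ then shows that every minimizer, hence every point of $G$, has the form $((1-t)p_1,tp_2)$ with $p_i\in F(P_i)$, $t\in[0,1]$.

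For (b), write $\alpha_i=|x_i|_{K_i}$ and $x_i=\alpha_i p_i$ with $p_i\in F(P_i)$. If $\alpha:=\alpha_1+\alpha_2>0$, put $t=\alpha_2/\alpha$; then $x=\alpha\bigl((1-t)p_1,\,tp_2\bigr)$, and by (a) the inner factor lies in $F(P_1\oplus P_2)$, so $x\in\alpha F(P_1\oplus P_2)$ and therefore $|x|_{K_1\oplus K_2}=\alpha=\alpha_1+\alpha_2$. The degenerate case $\alpha=0$ forces $x=0$, where both sides vanish. The main obstacle will be the $\subseteq$ direction of (a): verifying that a bounded face of $P_1\oplus P_2$ must be supported by a normal in $\mathrm{int}(C_1^*)\times\mathrm{int}(C_2^*)$, and carefully treating the tie case $m_1=m_2$, where the whole "mixed" suspension $\{((1-t)p_1,tp_2):p_i\in F_{\eta_i},\,t\in[0,1]\}$ appears as a single bounded face rather than as two lower-dimensional extremal pieces.
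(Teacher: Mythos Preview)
Your proposal is correct and follows essentially the same supporting-hyperplane approach as the paper. The paper's argument for the $\subseteq$ direction of (a) is terser---it simply writes $p=((1-t)p_1,tp_2)$ and asserts that minimality of $\langle u,p\rangle$ forces each $\langle u_i,p_i\rangle$ to be minimal---whereas you are more explicit in checking that the normal components lie in $\mathrm{int}(C_i^*)$ and in handling the trichotomy on $m_1,m_2$; for (b) the paper starts from $r=|x|_{K_1\oplus K_2}$ and decomposes, while you start from the $|x_i|_{K_i}$ and assemble, but these are the same computation run in opposite directions.
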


\begin{pf} {\it (a)} First, by convexity of the $P_i$ we have
\begin{equation}\label{e:freesum}
P_1\oplus P_2=\{\left((1-s)v_1,sv_2\right)\in C_1\times C_2\ |\ v_i\in P_i, \, 0\leq s\leq 1\}.
\end{equation}

Pick $p_i\in F(P_i)$, for $i=1,2$, and consider  $p=\left((1-t)p_1,tp_2\right)$ for some $0\leq t\leq 1$. Let $\G_i$ be a bounded face of $P_i$ containing $p_i$ with inner normal $u_i$, and let $b_i=\min_{v_i\in P_i}\langle u_i,v_i\rangle=\langle u_i,p_i\rangle$. Note that $b_i> 0$ since $0\not\in \G_i$, so by rescaling the $u_i$ we may assume that $b_i=1$.
Put $u=(u_1,u_2)$. Then $\langle u,p\rangle=1$. On the other hand,
for any $v=\left((1-s)v_1,sv_2\right)\in P_1\oplus P_2$ we have 
$$\langle u,v\rangle=(1-s)\langle u_1,v_1\rangle+s\langle u_2,v_2\rangle\geq 1.$$
This shows that $p$ belongs to a bounded face of $P_1\oplus P_2$.

Conversely, if $p\in F(P_1\oplus P_2)$ then $\langle u,p\rangle=\min_{v\in P_1\oplus P_2}\langle u,v\rangle$  for some $u=(u_1,u_2)$. As above, by \re{freesum}, we have
$$\langle u,p\rangle=(1-t)\langle u_1,p_1\rangle+t\langle u_2,p_2\rangle\quad\text{for some }\ 0\leq t\leq 1.$$
Therefore, $\langle u_i,p_i\rangle=\min_{v_i\in P_i}\langle u_i,v_i\rangle$ for $i=1,2$, i.e. $p_i\in F(P_i)$.

%From this it is clear that $c(P\oplus Q)\subseteq c(P)\times c(Q)$.
%Conversely, let $x=(x_1,x_2)\in\R^k\times\R^l$  and suppose $x_1\in r_1P$ and $x_2\in r_2Q$ for some $r_i>0$. Then
%for $p=x_1/r_1$, $q=x_2/r_2$ and $t=r_2/(r_1+r_2)$ we have
%$$\frac{1}{r_1+r_2}x=\left((1-t)p,tq\right)\in P\oplus Q,$$
%i.e. $x\in (r_1+r_2)(P\oplus Q)$. Therefore, $c(P\oplus Q)\supseteq c(P)\times c(Q)$.\\

{\it (b)} Let $r=|x|_{K_1\oplus K_2}$. Then $x\in rF(P_1\oplus P_2)$, hence, by {\it (a)}
$x=(x_1,x_2)=(r(1-t)p_1,rtp_2)$ for some $p_i\in F(P_i)$ and $0\leq t\leq 1$. 
This implies that $|x_1|_{K_1}= r(1-t)$ and $|x_2|_{K_2}= rt$ and so 
$$|x_1|_{K_1}+|x_2|_{K_2}= r=|x|_{K_1\oplus K_2}.$$

%Conversely, suppose $x_i\in r_ic(P_i)$ for some $r_i>0$. Then
%for $p_i=x_i/r_i\in F(P_i)$ and $t=r_2/(r_1+r_2)$ we have
%$$\frac{1}{r_1+r_2}x=\left((1-t)p_1,tp_2\right)\in P_1\oplus P_2,$$
%i.e. $x\in (r_1+r_2)(P_1\oplus P_2)$. 
\end{pf}

The following lemma is an easy adaptation of the calculation given in the proof of Lemma 3.2 in \cite[p. 15]{RyaZva}.

\begin{lem}\label{L:integral} Let $K\subset C$ be a co-convex body. Then
$$\int_{C}e^{-|x|_K}\,dx=n!\vol_n(K)=\Vol_n(K).$$
\end{lem}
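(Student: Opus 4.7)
The strategy is a standard ``polar coordinates in the gauge'' calculation: express the integral as an integral over the sublevel sets of $|\cdot|_K$ and reduce it to a gamma integral. The key ingredients are the positive homogeneity of the Minkowski functional together with the layer--cake formula.

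First I would record the sublevel-set identity
$$\{x\in C\ |\ |x|_K\leq r\}=rK\quad\text{for every }\ r\geq 0,$$
from which $\vol_n(\{x\in C\ |\ |x|_K\leq r\})=r^n\vol_n(K)$. The inclusion $\supseteq$ is immediate from the definition of $|\cdot|_K$. The inclusion $\subseteq$ requires that $K$ is star-shaped at the origin, i.e.\ $tK\subseteq K$ for $0\leq t\leq 1$; this is built into the structure of a co-convex body $K=c(P)$, since $P$ is convex in $C$ and $C\setminus P$ is bounded, so that $c(P)$ is downward-closed along rays from the origin in $C$.

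Next, applying Fubini to the tautology $e^{-s}=\int_s^{\infty}e^{-t}\,dt$ yields
\begin{align*}
\int_C e^{-|x|_K}\,dx
&=\int_C\left(\int_{|x|_K}^{\infty}e^{-t}\,dt\right)dx
=\int_0^{\infty}e^{-t}\vol_n(\{x\in C\ |\ |x|_K\leq t\})\,dt\\
&=\vol_n(K)\int_0^{\infty}t^n e^{-t}\,dt
=n!\,\vol_n(K),
\end{align*}
using the gamma identity $\int_0^{\infty}t^n e^{-t}\,dt=\Gamma(n+1)=n!$. The final equality $n!\,\vol_n(K)=\Vol_n(K)$ is merely the definition of normalized volume.

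The main (and essentially only) obstacle is justifying the sublevel-set identity $\{|x|_K\leq r\}=rK$, i.e.\ checking that $|\cdot|_K$ behaves as the gauge of a star body at the origin. Once this is in place the proof collapses to a single application of Fubini followed by an evaluation of the gamma function. As the paper indicates, this is the standard Ryabogin--Zvavitch calculation for centrally symmetric convex bodies transplanted to co-convex bodies, with the Euclidean ball replaced by $K$ and the ambient space $\R^n$ replaced by the cone $C$.
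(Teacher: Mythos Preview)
Your argument is correct and is exactly the standard gauge/layer-cake computation the paper has in mind; the paper does not actually prove this lemma but simply points to the calculation in \cite[p.~15]{RyaZva}, of which your write-up is a faithful adaptation to the co-convex setting. Note also that the sublevel-set identity you single out as the only potential obstacle is already recorded in the paper just before the lemma (``$K$ is the set of those $x\in C$ with $|x|_K\leq 1$''), so together with the obvious homogeneity $|tx|_K=t|x|_K$ there is nothing further to justify.
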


Now the above mentioned property of the free sum follows 
from the two lemmas and the Fubini theorem.

\begin{prop}\label{P:freesums}
Let $K_1\oplus K_2$ be a free sum of co-convex sets 
$K_i=c(P_i)\subset C_i\subset \R^{n_i}$, for $i=1,2$. Then
$$\Vol_{n_1+n_2}(K_1\oplus K_2)=\Vol_{n_1}(K_1)\Vol_{n_2}(K_2).$$
\end{prop}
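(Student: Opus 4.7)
The proposition is really the combination of the two preceding lemmas with Fubini's theorem, so the plan is to assemble these pieces. First I would apply \rl{integral} to the free sum directly, writing
\[
\Vol_{n_1+n_2}(K_1\oplus K_2)=\int_{C_1\times C_2} e^{-|x|_{K_1\oplus K_2}}\,dx.
\]
This is the natural place to start because it converts the (normalized) volume into an integral that is sensitive to the behavior of the Minkowski functional of the free sum.

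Next I would invoke the additivity property from \rl{additive}(b), which gives $|x|_{K_1\oplus K_2}=|x_1|_{K_1}+|x_2|_{K_2}$ for $x=(x_1,x_2)\in C_1\times C_2$. This turns the exponential in the integrand into a product $e^{-|x_1|_{K_1}}e^{-|x_2|_{K_2}}$, at which point Fubini's theorem (valid here because the integrand is nonnegative and we have already seen via \rl{integral} that each factor is integrable over its respective cone) lets me split the integral:
\[
\int_{C_1\times C_2} e^{-|x|_{K_1\oplus K_2}}\,dx=\left(\int_{C_1} e^{-|x_1|_{K_1}}\,dx_1\right)\left(\int_{C_2} e^{-|x_2|_{K_2}}\,dx_2\right).
\]

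Finally I would apply \rl{integral} to each factor separately, obtaining $\Vol_{n_1}(K_1)\cdot \Vol_{n_2}(K_2)$, which combined with the first display completes the proof.

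There is essentially no hard step left here since the real combinatorial and geometric content has been absorbed into \rl{additive}(b) (the additivity of the Minkowski functionals under free sum, which rests on the description of bounded faces in part (a)) and \rl{integral} (the volume-integral identity, adapted from \cite{RyaZva}). The only minor technical point worth verifying in passing is the applicability of Fubini, but since the integrand is nonnegative on $C_1\times C_2$ and each one-dimensional slice gives a finite volume, Tonelli's theorem suffices without any additional hypothesis.
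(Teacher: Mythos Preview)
Your proof is correct and follows essentially the same route as the paper: apply \rl{integral} to the free sum, use \rl{additive}(b) to split the Minkowski functional, apply Fubini (Tonelli), and then \rl{integral} again to each factor. The paper's proof is the same one-line computation, without the explicit remark on Tonelli.
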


\begin{pf} Indeed, by \rl{integral} and \rl{additive}, part (b)
$$\Vol_{n_1+n_2}(K_1\oplus K_2)=\int_{C_1\times C_2}e^{-|x|_{K_1\oplus K_2}}\,dx=\int_{C_1}e^{-|x_1|_{K_1}}\,dx_1
\int_{C_2}e^{-|x_2|_{K_2}}\,dx_2=\Vol_{n_1}(K_1)\Vol_{n_2}(K_2).$$
\end{pf}

Now let an ideal $I\subset R=k[x_1,\dots, x_n]_{(x_1,\dots, x_n)}$ be the sum of monomial ideals
whose sets of generators involve pairwise disjoint collections of variables. 
Then \rp{freesums} provides us with the following multiplicativity property of the $j$-multiplicity. 

\begin{thm}\label{T:product}
Assume that the set of the variables $\{x_1,\dots, x_n\}$ is partitioned into subsets 
$X_1,\dots, X_s$ and consider the ideal $I=I_1R+\dots+I_sR$ for some monomial 
ideals $I_k\subset R_k=k[X_k]_{(X_k)}$ for $k=1, \ldots, s$. Then
$$j(I)=j(I_1)\cdots j(I_s).$$
\end{thm}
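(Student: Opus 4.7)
The plan is to induct on $s$, the base case $s=2$ being where the geometry happens. Throughout I will combine the volume interpretation of $j$-multiplicity from \re{coconvex} with the multiplicativity of volumes under free sums from \rp{freesums}.

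For the base case, write $n=n_1+n_2$ and $I=I_1R+I_2R$. The minimal monomial generators of $I$ are exactly the minimal monomial generators of $I_1$, whose exponent vectors sit in $\R^{n_1}\times\{0\}$, together with those of $I_2$, whose exponent vectors sit in $\{0\}\times\R^{n_2}$. Consequently the cone $C$ over $F(I)$ in $\R^n$ decomposes as $C=C_1\times C_2$, where $C_i$ is the cone over $F(I_i)$ in $\R^{n_i}$.

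The polyhedral crux of the proof is the identity
\[
P(I)\cap C \;=\; P_1\oplus P_2, \qquad\text{where}\quad P_i:=P(I_i)\cap C_i.
\]
The inclusion $\supseteq$ is immediate since both $P_1\times\{0\}$ and $\{0\}\times P_2$ are contained in $P(I)\cap C$, which is convex. For $\subseteq$, I would take $v=(v^{(1)},v^{(2)})\in P(I)\cap C$, write $v=p+r$ with $p\in F(I)$ and $r\in\R^n_{\geq 0}$, and split the convex representation of $p$ into contributions from generators of $I_1$ and of $I_2$, producing total weights $\alpha,\beta\geq 0$ with $\alpha+\beta=1$. When $\alpha,\beta>0$, the decomposition $v=\alpha(v^{(1)}/\alpha,0)+\beta(0,v^{(2)}/\beta)$ places $v$ in the free sum, with $v^{(1)}/\alpha\in P_1$ and $v^{(2)}/\beta\in P_2$ by the cone property of the $C_i$. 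The boundary case $\alpha=0$ (symmetrically $\beta=0$) is the delicate step: it forces $v^{(1)}=r^{(1)}$, and the hypothesis $v\in C$ ensures $r^{(1)}\in C_1$, so $r^{(1)}=c\tilde q_1$ for some $c\geq 0$ and $\tilde q_1\in F(I_1)$. If $c=0$ then $v=(0,v^{(2)})\in\{0\}\times P_2\subset P_1\oplus P_2$; if $c>0$, any $\epsilon\in(0,c]$ turns the trivial decomposition into one with two genuinely positive weights, again exhibiting $v\in P_1\oplus P_2$.

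Taking closures of complements inside $C=C_1\times C_2$ upgrades the polyhedral identity to the co-convex identity $c(P(I)\cap C)=c(P_1)\oplus c(P_2)$. \rp{freesums} together with \re{coconvex} then yield
\[
j(I)=\Vol_n(c(P(I)\cap C))=\Vol_{n_1}(c(P_1))\,\Vol_{n_2}(c(P_2))=j(I_1)\,j(I_2).
\]
The induction step is formal: writing $I=I_1R+(I_2R+\cdots+I_sR)$ reduces the $s$-fold statement to the base case combined with the inductive hypothesis applied to $I_2,\dots,I_s$.

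The main obstacle is the boundary case $\alpha=0$ in the polyhedral identity: without exploiting the cone condition $v\in C$ one can exhibit points of $P(I)$ that do not lie in $P_1\oplus P_2$, so passing to $P(I)\cap C$ (rather than all of $P(I)$) is essential. This is consistent with the fact that only the volume inside $C$ computes $j(I)$.
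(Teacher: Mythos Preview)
Your proof is correct and follows essentially the same approach as the paper: interpret $j$ via \re{coconvex} and then apply the multiplicativity of normalized volume under free sums (\rp{freesums}). The paper simply asserts the key identity $c(P)=c(P_1)\oplus\cdots\oplus c(P_s)$ for general $s$ without justification, whereas you supply a detailed verification of $P(I)\cap C=P_1\oplus P_2$ (including the boundary case) and then induct---so your argument is more complete but otherwise identical in strategy.
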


\begin{pf} Let $C\subset\R^n$ be the cone over $F(I)$ and $P=P(I)\cap C$ as above. 
Then the $j$-multiplicity $j(I)$ equals the normalized volume of
the co-convex body $c(P)$, as in \re{coconvex}. Similarly, let
$C_k\subset\R^{n_k}$, where $n_k=|X_k|$, 
be the cone over $F(I_k)$ and  $P_k=P(I_k)\cap C_k$. Then 
$j(I_k)$ equals the normalized volume of $c(P_k)$.
On the other hand, $c(P)$ equals the free sum $c(P_1)\oplus\cdots\oplus c(P_s)$.
Therefore, by \rp{freesums} we have 
$$j(I)=\Vol_{n}(c(P))=\Vol_{n_1}(c(P_1))\cdots \Vol_{n_s}(c(P_s))=j(I_1)\cdots j(I_s).$$
\end{pf}

\begin{rem}
It would be interesting to give an algebraic proof of \rt{product}. For instance, using \rt{edgesubring} and \rt{cedgesubring} one may give an algebraic proof for the case of edge ideals of $m$-uniform hypergraphs with properly connected components. 
Moreover, using methods of commutative algebra we can show \rt{product} holds for arbitrary zero-dimensional ideals, or for  arbitrary homogenous ideals generated in the same degree. This leads us to believe that \rt{product} holds true even if the ideals involved are not monomial. These results will be addressed in a subsequent paper.
\end{rem}

%%%%%%%%%%%%%%%%%%%%%%%%%%%%%%%
\section{The $j$-multiplicity of edge ideals and volumes}\label{S:edgeidealvolumes}

Consider a hypergraph $G$ with the node set  $V(G)=\{x_1,\dots, x_n\}$ and the edge set $E(G)$. By definition, $E(G)$ consists of finitely many subsets of $V(G)$, called edges of $G$. We say $G$ is  {\it $m$-uniform} if each edge of $G$ has size $m$.
 Note that a {\it simple graph} is a 2-uniform hypergraph. By abuse of notation we let $k[x_1,\dots, x_n]$ be a polynomial ring generated by the $x_i$ as indeterminates over a field~$k$.  
  To every edge $\{x_{i_1},\ldots, x_{i_k}\}$  in $G$  we associate a square-free monomial $x_{i_1}\cdots x_{i_k}$ in the local ring $R=k[x_1,\dots, x_n]_{(x_1,\dots, x_n)}$. 
Then the {\it edge ideal of $G$} is 
$$I(G) =(x_{i_1}\cdots x_{i_k} \ | \  \{x_{i_1}, \ldots, x_{i_k} \} \in E(G))\subset R.$$ 
We denote the Newton polyhedron and the Newton polytope of $I(G)$
simply by $P(G)$ and $F(G)$, respectively. 
Following \cite{OH1998, V1998} we call $F(G)$ the {\it edge polytope} of $G$.

Assume $G$ is $m$-uniform. Then it can be readily seen that the monomials in $R$ associated to the edges of $G$ are the minimal generators of $I(G)$.
Note that $F(G)$ is the convex hull of some  lattice points in $\Z^n$ in which all entries are zero except for $m$ entries which are $1$. Thus,  
$F(G)$ lies in the hyperplane 
$$L=\{(z_1,\dots,z_n)\in\R^n \ | \ z_1+\cdots+z_n=m\},$$
and so the dimension of $F(G)$  is at most $n-1$. Therefore, the edge polytope $F(G)$ is the unique maximal compact face of $P(G)$, and if the dimension of $F(G)$ is exactly $n-1$,  then $F(G)$ is the unique compact facet of $P(G)$. 
Recall the formula in  \rt{JonathanJack} on the $j$-multiplicity of a monomial ideal and the volume.
For the edge ideal $I(G)$, there is only one term in the sum corresponding to $F(G)$ as the unique compact facet when the $j$-multiplicity is not zero. In this case, the volume of the pyramid 
$\Pyr(F(G))$ is computed by \re{pyrvolume} where the lattice distance $h(F(G))=m$. Therefore, we obtain the following result connecting the $j$-multiplicity to the volume of the edge polytope.
\begin{cor}\label{C:JonathanJack}
Let $G$ be an $m$-uniform hypergraph on $n$ nodes. Then $$j(I(G))=m \cdot \Vol_{n-1}(F(G)).$$
\end{cor}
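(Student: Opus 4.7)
The plan is to derive the corollary as a direct specialization of Theorem~\ref{T:JonathanJack} to edge ideals, using the structural observations about $P(G)$ already collected in the paragraph preceding the statement. Essentially all the geometric work is done; I just need to identify the compact facets of $P(G)$ and compute the lattice distance for the unique contributor.

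First I would handle the two cases separately according to whether $j(I(G))$ vanishes. If $j(I(G)) = 0$, then by \rr{Bivia} the polyhedron $P(G)$ has no compact facets, i.e.\ every compact face of $P(G)$ has dimension at most $n-2$. Since $F(G)$ is the unique maximal compact face of $P(G)$, this forces $\dim F(G) \leq n-2$, and hence $\Vol_{n-1}(F(G)) = 0$; both sides of the asserted equality are zero.

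If $j(I(G)) \neq 0$, then $P(G)$ has at least one compact facet, and since $F(G)$ is the unique maximal compact face, $F(G)$ itself must be that compact facet and $\dim F(G) = n-1$. Applying \rt{JonathanJack} gives
$$j(I(G)) = h(F(G))\,\Vol_{n-1}(F(G)),$$
so it remains to identify $h(F(G))$. Because every minimal generator of $I(G)$ is a square-free monomial of degree $m$, the affine span of $F(G)$ is contained in the hyperplane $L = \{z \in \R^n \mid z_1 + \cdots + z_n = m\}$ with primitive normal $u = (1,\dots,1)$ and lattice distance $m$ from the origin; thus $h(F(G)) = m$, and the desired formula follows.

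No step here is a genuine obstacle: the heart of the corollary is already packaged in \rt{JonathanJack}, and the role of the $m$-uniformity hypothesis is purely to pin down the single compact facet and its lattice distance. The only mild subtlety is keeping straight the degenerate case $\dim F(G) < n-1$, which is exactly the content of the vanishing criterion \rr{Bivia} combined with the uniqueness of $F(G)$ as the maximal compact face.
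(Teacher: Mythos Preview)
Your proof is correct and follows essentially the same approach as the paper: the paper's justification (given in the paragraph immediately preceding the corollary) likewise observes that when $j(I(G))\neq 0$ the edge polytope $F(G)$ is the unique compact facet of $P(G)$, applies \rt{JonathanJack}, and uses $h(F(G))=m$ via \re{pyrvolume}. Your treatment of the degenerate case $j(I(G))=0$ through \rr{Bivia} is slightly more explicit than the paper's, but the argument is the same.
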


Let $G$ be a hypergraph on $n$ nodes. If $G$ has an isolated node, then every generator of $I(G)$ will be missing at least one of the variables which makes $F(G)$ of dimension less than $n-1$. Therefore, $j(I(G))$ is zero. Similarly, if the number of edges of $G$ is less than the number of nodes, then $j(I(G))$ is zero. Therefore,

\begin{rem}\label{R:iso}
If $G$ is  a hypergraph  with an isolated node, or if the number of edges of $G$ is less than the number of nodes, then $j(I(G))=0$. Thus, for the rest of this paper we will assume that the hypergraphs in question do not have isolated nodes, and the number of edges of each connected component is at least the number of its nodes. 
\end{rem}

A hypergraph $G$ is called {\it connected} if for any two nodes $x_i, x_j \in V(G)$, there is a sequence of edges  in $E(G)$ such that $x_i$ and $x_j$ belong to the first and the last edges of the sequence respectively, and consecutive edges in  the sequence  have a common node. 
%Let $G$ be an $m$-uniform hypergraph. 
Let $G_1,\dots, G_c$ be the connected components of $G$. Then the edge ideal $I(G)$ is the sum of the extensions of the edge ideals $I(G_k)$ for $k=1, \ldots, c$ whose generators depend on  pairwise disjoint collections of variables. Therefore, by \rt{product} we obtain the following result.
\begin{prop}\label{P:product}
Let $G_1,\dots, G_c$ be the connected components of a hypergraph $G$. Then
$$j(I(G))=j(I(G_1))\cdots j(I(G_c)).$$
\end{prop}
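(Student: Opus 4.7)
The plan is to recognize this as a direct instance of \rt{product}. First I would observe that the connected components $G_1, \dots, G_c$ of $G$ induce a partition of the node set $V(G) = V(G_1) \sqcup \cdots \sqcup V(G_c)$, since being in the same connected component is an equivalence relation on nodes (after excluding isolated nodes by \rr{iso}, each node lies in exactly one connected component).

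Next I would verify that the edge ideal decomposes correctly. By definition of connected component, every edge of $G$ is entirely contained within a single $V(G_k)$. Consequently each minimal monomial generator of $I(G)$ is a product of variables from exactly one $V(G_k)$, and
$$I(G) = I(G_1) R + \cdots + I(G_c) R,$$
where $I(G_k) \subset R_k = k[V(G_k)]_{(V(G_k))}$. This exhibits $I(G)$ as a sum of monomial ideals whose generating sets involve pairwise disjoint collections of variables, which is precisely the hypothesis of \rt{product}.

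Applying \rt{product} to the partition $X_k = V(G_k)$ and the ideals $I_k = I(G_k)$ immediately yields
$$j(I(G)) = j(I(G_1)) \cdots j(I(G_c)),$$
as desired.

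There is no real obstacle here; the work has been done in \rp{freesums} and \rt{product}, and all that remains is to match the combinatorial setup of connected components with the algebraic setup of disjoint variable supports. The only minor care needed is the observation that connected components partition $V(G)$ and that edges respect this partition, both of which are immediate from the definition of connectedness.
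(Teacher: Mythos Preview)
Your proposal is correct and follows exactly the paper's approach: the paper observes in the paragraph preceding the proposition that $I(G)$ is the sum of the extensions of the $I(G_k)$, whose generators depend on pairwise disjoint collections of variables, and then invokes \rt{product}. Your write-up simply makes the partition of the node set and the decomposition of $I(G)$ explicit, which is entirely in line with the paper's reasoning.
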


Recall that by a result of Bivi{\`a}-Ausina \cite{AB}, for a monomial ideal the analytic spread equals one plus the  maximum of the dimensions of the compact faces of the Newton polyhedron. If $I(G)$ is the edge ideal of an $m$-uniform hypergraph $G$ on $n$ nodes and $e$ edges, then   $F(G)$ is the unique maximal compact face of the Newton polyhedron $P(G)$.  Therefore,
$$\ell(I(G))= 1+\dim F(G)=\rank M(G),$$ 
where $M(G)$ denotes  the $e\times n$ {\it incidence matrix} of $G$.
If $G$ is a simple graph, then $\rank M(G)$ is equal to $n-c_0$, where $c_0$ is the number of connected components of $G$ that contain no odd cycles, i.e. the number of bipartite components of $G$ \cite{Grossman}. Hence, 

\begin{rem}\label{R:spread}
If $I(G)$ is the edge ideal of an  $m$-uniform hypergraph $G$, then $\ell(I(G))$ is the rank of the incidence matrix of $G$. In particular, if $G$ is a simple graph on $n$ nodes, then $\ell(I(G)) = n-c_0$.
\end{rem}

Using \rr{jvanishing} and \rr{spread} we obtain the following characterization for  positivity of the $j$-multiplicity of edge ideals of simple graphs.

\begin{prop}\label{P:positivity}
If $G$ is a simple graph, then $j(I(G))\not =0$ if and only if all connected components of $G$  contain an odd cycle, that is they are non-bipartite. 
\end{prop}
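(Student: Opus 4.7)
The plan is to simply combine the two remarks immediately preceding the statement, since they together give a chain of equivalences leading to the claim. Let $G$ be a simple graph on $n$ nodes, so that $I(G)$ sits inside the $n$-dimensional local ring $R=k[x_1,\dots,x_n]_{(x_1,\dots,x_n)}$.

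First I would invoke \rr{jvanishing}, which says $j(I(G))\neq 0$ if and only if the analytic spread $\ell(I(G))$ equals the Krull dimension $n$. Next I would invoke \rr{spread}, which identifies $\ell(I(G))$ with $n-c_0$, where $c_0$ is the number of bipartite connected components of $G$ (this uses the Bivi\`a-Ausina description of analytic spread together with Grossman's formula for the rank of the incidence matrix). Stringing the two equivalences together, $j(I(G))\neq 0$ is equivalent to $c_0=0$, i.e.\ no connected component of $G$ is bipartite. Finally, the classical characterization of bipartite graphs tells us that a connected graph is bipartite if and only if it contains no odd cycle, so $c_0=0$ is the same as saying every connected component of $G$ contains an odd cycle.

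There is essentially no obstacle here: both nontrivial inputs (the formula $\ell(I(G))=\rk M(G)$ and the equality $\rk M(G)=n-c_0$) have already been recorded in \rr{spread}, and the vanishing criterion is \rr{jvanishing}. The only bookkeeping point worth mentioning is consistency with the blanket assumption in \rr{iso} that $G$ has no isolated nodes: an isolated node would constitute a (trivially) bipartite component with no odd cycle, giving $c_0\geq 1$ and hence $j(I(G))=0$, in agreement with \rr{iso}. Thus under either convention the equivalence stated in the proposition holds, and the proof is a two-line application of the preceding remarks.
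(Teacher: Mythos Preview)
Your proposal is correct and matches the paper's own argument exactly: the paper also deduces \rp{positivity} directly from \rr{jvanishing} and \rr{spread}, noting that $j(I(G))\neq 0$ iff $\ell(I(G))=n$ iff $c_0=0$.
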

In \rs{pecas} we generalize \rp{positivity} to $m$-uniform hypergraphs. 
If a simple connected graph has the same number of nodes as the number of edges, then it contains exactly one cycle, hence it is called {\it unicyclic}. Therefore, in a simple graph  the number of nodes is equal to the number of edges if and only if the connected components are unicyclic.  The following result computes the $j$-multiplicity of the edge ideals of simple graphs with unicyclic components.  In the following proof, $\tau_0$ stands for the maximum number of node-disjoint odd cycles in $G$, called {\it odd tulgeity} of $G$.

\begin{prop}\label{P:unicyclic} 
Let $G$ be a simple graph with $c$ connected components and $e=n$. If $j(I(G)) \not =0$, then
$j(I(G))= 2^{c}$.
In particular, if $G$ unicyclic, then $j(I(G))= 2$ when $G$ has an odd cycle, and it is zero otherwise.
\end{prop}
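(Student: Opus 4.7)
The plan is to reduce to a single connected component via the multiplicativity result \rp{product}, and then to show that each relevant component contributes a factor of $2$ to the product. First, since $e=n$ and every component has at least as many edges as nodes (see \rr{iso}), I would note that in fact $e_i = n_i$ for each connected component $G_i$, where $e_i$ and $n_i$ denote the number of edges and nodes of $G_i$. A connected simple graph with equally many edges and vertices contains exactly one cycle, so each $G_i$ is unicyclic. By \rp{product}, $j(I(G))=\prod_{i=1}^c j(I(G_i))$, and if the left-hand side is nonzero then each factor is nonzero. Then \rp{positivity} forces the unique cycle in each $G_i$ to be odd.

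So the task reduces to proving that for a connected unicyclic graph $H$ on $n'$ nodes whose cycle is odd, $j(I(H))=2$. By \rc{JonathanJack}, $j(I(H))=2\cdot \Vol_{n'-1}(F(H))$. The edge polytope $F(H)$ has exactly $n'$ vertices (the incidence vectors of its $n'$ edges) lying in the hyperplane $z_1+\cdots+z_{n'}=2$, and since $j(I(H))\neq 0$, the polytope is full-dimensional inside this hyperplane, hence $F(H)$ is an $(n'-1)$-simplex. Applying \re{pyrvolume} with $h(F(H))=2$, one gets
$$\Vol_{n'}(\Pyr(F(H)))= 2\cdot \Vol_{n'-1}(F(H))=j(I(H)).$$
But $\Pyr(F(H))$ is the simplex with vertices $0, v_1,\ldots, v_{n'}$, where $v_1,\ldots, v_{n'}$ are the rows of the incidence matrix $M(H)$, so its normalized volume is $|\det M(H)|$.

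Thus the remaining step is the classical linear-algebra fact that the (square) vertex-edge incidence matrix of a connected unicyclic graph has determinant $\pm 2$ when the cycle is odd and $0$ when it is even. I would prove this by ordering the edges so that the leaves of the tree attached to the cycle come first and applying row/column reductions to peel them off one at a time, each contributing a unit factor; what remains is the incidence matrix of the cycle itself, whose determinant is computed directly by cofactor expansion (or from the observation that over $\mathbb{F}_2$ the kernel is nontrivial precisely when the cycle is even). This computation is the only non-formal step and is the main potential obstacle, but it is standard and short. Combining, $j(I(G_i))=2$ for each component, and the multiplicativity formula yields $j(I(G))=2^c$. For the final statement, taking $c=1$ recovers the unicyclic case, and the even-cycle case falls under \rp{positivity} which already gives $j(I(G))=0$.
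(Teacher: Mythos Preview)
Your argument is correct, and it reaches the same endpoint as the paper---namely that $j(I(G))=|\det M(G)|$ for the square incidence matrix---but the route is genuinely different. The paper does \emph{not} invoke the multiplicativity formula \rp{product}; instead it works with the full graph $G$ directly, observes that $\Pyr(F(G))$ is an $n$-simplex whose normalized volume is $|\det M(G)|$, and then cites \cite[Theorem~2.6]{Grossman} to conclude that this determinant equals $2^{\tau_0}=2^c$. Your approach trades that citation for an elementary inductive computation of the determinant of a single unicyclic component (peel off leaves, reduce to the cycle), at the cost of importing the heavier machinery behind \rp{product} (free sums of co-convex bodies). Both are valid; yours is more self-contained on the linear-algebra side, while the paper's is independent of the multiplicativity result and so logically prior to it in the development.
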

\begin{proof}
Since $e=n$, by \rp{positivity}  we obtain $j(I(G))\not = 0$ if and only if each connected component has exactly one odd cycle. Thus in this case, $\tau_0=c$. By \cite[Theorem 2.6]{Grossman}, the maximal minor of the incidence matrix $M(G)$ with maximum absolute value is $\pm 2^{\tau_0}$. But $M(G)$ is a square matrix in our case. Therefore,  the absolute value of $\det( M(G))$ is $2^{c}$.  Note that $\Pyr(F(G))$ is an $n$-simplex and  the vertices of $F(G)$ are exactly the rows of the incidence matrix $M(G)$.
Thus the normalized volume of $\Pyr(F(G))$ equals the absolute value of  $\det(M(G))$. Now the result follows from  \rt{JonathanJack}. 
\end{proof}

In \rp{muniformunicyclic} we prove an extension of \rp{unicyclic}  for $m$-uniform hypergraphs.

\begin{rem}
If $G$ is the {\it complete} $m$-uniform hypergraph on $n$ nodes, then  \rex{CompleteUniform} provides a closed formula for the $j$-multiplicity of $I(G)$ in terms of $m$ and $n$. 
\end{rem}

%%%%%%%%%%%%%%%%%%%%%%%%%%%%%%%%%%%%%%%%%
\section{The pivot equivalence relation and analytic spread}\label{S:pecas}

Let $G$ be an $m$-uniform hypergraph. By \rr{iso}, we will always assume that $G$ has no isolated nodes. Then $G$ is called {\it properly connected} if for any two edges $u, v$ in $E(G)$, there is a sequence of edges of $G$  starting with $u$ and ending with $v$, such that the intersection of consecutive edges  has size $m-1$. Note that simple connected graphs are properly connected. As in \cite{BK}, we define a relation $\approx$ on the set of nodes of $G$ by letting  $x_i \approx x_j$ if there is a subset $A \subset \{x_1, \ldots, x_n\} \setminus \{x_i, x_j\}$, such that $\{x_i\}\cup A$ and  $\{x_j\}\cup A$ are edges of $G$. Then we define an equivalence relation $\sim$ on the set of nodes of $G$  by declaring $x_i \sim x_j$  for two nodes  $x_i, x_j$ if there is a sequence of nodes $x_{i_1}, \ldots, x_{i_r}$ such that
$$
x_i = x_{i_1}\approx x_{i_2} \approx  \cdots \approx  x_{i_r} =x_j.
$$
Note that $x_i \sim x_i$ for $i=1, \ldots n$ as we assume $G$ has no isolated nodes. This equivalence relation is called {\it pivot equivalence} and it gives a partition of the nodes of $G$ into {\it pivot equivalence classes}.

\begin{prop}\label{P:HyperSpread}
Let $G$ be an $m$-uniform hypergraph on $n$ nodes in which the connected components are properly connected. Let $c$ be the number of connected components and $p$ be the number of pivot equivalence classes of $G$. Then
$$
\ell(I(G))=n-p+c.
$$ 
\end{prop}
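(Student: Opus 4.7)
My plan is to combine Remark \ref{R:spread} with a direct computation of the rank of the incidence matrix $M(G)$. Since \rr{spread} identifies $\ell(I(G))$ with $\rank M(G)$, the proposition reduces to showing
$$\rank M(G) = n - p + c.$$
Because the incidence matrix of $G$ is block-diagonal (up to reordering rows and columns) with one block per connected component, and a node equivalence class is contained in a single connected component, both sides are additive over components. Hence I would reduce to the case where $G$ is connected, so $c=1$, and prove $\rank M(G) = n-p+1$.

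For the connected (properly connected) case, the key observation is the following dictionary between pivot moves and row-space elements. Whenever $x_i \approx x_j$ there exist edges $\{x_i\}\cup A$ and $\{x_j\}\cup A$ in $G$; the corresponding rows of $M(G)$ differ by the vector $e_j - e_i$. Taking transitive closures, every vector $e_j - e_i$ with $x_i \sim x_j$ lies in the row space of $M(G)$. Conversely, fix any edge (row) $v_0$; proper connectedness says any other edge $v$ can be reached from $v_0$ by a sequence of edges in which consecutive edges share exactly $m-1$ nodes, so $v - v_0$ is a telescoping sum of vectors $e_{j_k} - e_{i_k}$ with $x_{i_k} \sim x_{j_k}$. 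Therefore the row space of $M(G)$ is exactly the span of $v_0$ together with the set $D = \{e_j - e_i : x_i \sim x_j\}$.

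Now I would compute the dimension of this span. If the pivot equivalence classes have sizes $s_1,\dots,s_p$, then $D$ decomposes as a disjoint union of sets of differences within each class, and the span of the $k$-th such set is the hyperplane $\{\sum_{x_i\in C_k} z_i = 0\}$ inside the coordinate subspace indexed by the class $C_k$. These subspaces live in disjoint coordinates, so $\dim\mathrm{span}(D) = \sum_k (s_k-1) = n - p$. Finally, $v_0\in\{0,1\}^n$ has coordinate sum $m>0$, while every element of $\mathrm{span}(D)$ has coordinate sum zero, so $v_0\notin\mathrm{span}(D)$. Adding it contributes exactly one more dimension, yielding $\rank M(G) = n-p+1$, as required.

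The main obstacle is making the two containments for the row space rigorous in the $m$-uniform setting. The upward direction is immediate from the definition of $\approx$, but the downward direction relies crucially on the \emph{proper} connectedness hypothesis: it is exactly what guarantees that the walk between two edges proceeds by single-node pivots, so that the increments lie in $D$ rather than being arbitrary edge-to-edge differences. After that, the rank count is a clean linear-algebra step.
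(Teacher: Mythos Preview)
Your argument is correct, but it takes a different route from the paper. The paper's proof simply invokes \rr{spread} to identify $\ell(I(G))$ with $\rank M(G)$, decomposes into connected components exactly as you do, and then \emph{cites} the main theorem of Bj\"orner--Karlander \cite{BK}, which asserts that the incidence matrix of a properly connected $m$-uniform hypergraph on $n_i$ nodes with $p_i$ pivot classes has rank $n_i-p_i+1$. Summing over components gives the result. What you have done instead is to supply a direct, self-contained proof of the Bj\"orner--Karlander rank formula: your row-space analysis (showing that the row space is spanned by a single edge vector $v_0$ together with the difference set $D$, and then counting $\dim\operatorname{span}(D)=n-p$ via the block decomposition over pivot classes) is an elementary linear-algebra reproof of their result. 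Your approach buys independence from the external reference and makes transparent exactly where proper connectedness enters (the telescoping step); the paper's approach is much shorter but leaves the work to \cite{BK}.
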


\begin{proof}
Let $G_1, \ldots, G_c$ be the connected components of $G$. Since the $G_i$ are properly connected, then  by the main theorem of   \cite{BK} the rank of the incidence matrix of $G_i$ is $n_i-p_i+1$, where $n_i$ is the number of nodes and $p_i$ is the number of pivot equivalence classes in $G_i$.  Recall from \rr{spread} that the analytic spread of the edge ideal of $G$ can be computed as the rank of its incidence matrix, which is the sum of the ranks of the incidence matrices of the $G_i$. Hence the analytic spread of the edge ideal $I(G)$ is given by $\sum_{i=1}^c(n_i-p_i+1)$. Therefore, we may write $\ell(I(G))=n-p+c$. 
\end{proof}

Using \rr{jvanishing} and \rp{HyperSpread} we obtain the following characterization for  positivity of the $j$-multiplicity of edge ideals of $m$-uniform hypergraphs.

\begin{prop}\label{P:uniformpositivity}
Let $G$ be an $m$-uniform hypergraph in which the connected components are properly connected. Then
$j(I(G))\not = 0$ if and only if the nodes in each connected component of $G$  are pivot equivalent.
\end{prop}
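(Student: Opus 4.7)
The plan is to combine Remark \ref{R:jvanishing} with the analytic-spread formula of Proposition \ref{P:HyperSpread} and then translate the resulting numerical identity into the stated combinatorial condition.

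First I would observe that by Remark \ref{R:jvanishing}, $j(I(G)) \neq 0$ is equivalent to $\ell(I(G)) = n$. Since the connected components of $G$ are properly connected, Proposition \ref{P:HyperSpread} gives $\ell(I(G)) = n - p + c$, where $p$ is the number of pivot equivalence classes and $c$ is the number of connected components. Hence $j(I(G)) \neq 0$ if and only if $p = c$.

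Next I would verify that the pivot equivalence relation refines the partition into connected components. If $x_i \approx x_j$, then there is a subset $A$ with $\{x_i\} \cup A$ and $\{x_j\} \cup A$ both edges of $G$; these two edges share the nodes in $A$ (which is nonempty since $m \geq 2$ and $G$ has no isolated node so we can assume $m\geq 2$; the case $m=1$ is trivial), so $x_i$ and $x_j$ lie in the same connected component. Iterating, any two pivot equivalent nodes lie in the same connected component. Consequently every pivot equivalence class is contained in a single connected component, which forces $p \geq c$, with equality exactly when each connected component is a single pivot equivalence class, i.e. when the nodes of each connected component are all pivot equivalent.

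Combining the two observations, $j(I(G)) \neq 0 \Leftrightarrow p = c \Leftrightarrow$ the nodes within each connected component of $G$ are pivot equivalent, as required.

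The argument is essentially a bookkeeping step on top of Proposition \ref{P:HyperSpread}; the only point that requires a moment of care is checking that pivot equivalence classes do not cross connected components, which is immediate from the definition of $\approx$ once one notes that the two relevant edges share the nodes of $A$.
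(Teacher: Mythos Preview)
Your proof is correct and follows exactly the approach the paper intends: the proposition is stated immediately after Proposition~\ref{P:HyperSpread} with the one-line justification ``Using \rr{jvanishing} and \rp{HyperSpread},'' and you have simply spelled out the implicit step that $p=c$ is equivalent to each connected component being a single pivot class (which in turn rests on the observation that pivot equivalence cannot cross components when $m\geq 2$). Your parenthetical handling of $m=1$ is slightly awkward but harmless, since that degenerate case plays no role in the paper.
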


If $G$ is a properly connected $m$-uniform hypergraph admitting pivot equivalence classes $V_1, \ldots, V_p$, then by
the first proposition of \cite{BK} there are fixed positive integers $b_1, \ldots, b_p$ such that each edge of $G$ contains exactly $b_i$ nodes from $V_i$ for $i=1, \ldots, p$. Hence $m=b_1 + \cdots + b_p \geq p$. Therefore, 

\begin{rem}
If $G$ is a properly connected $m$-uniform hypergraph, then $G$ has at most $m$ pivot equivalence classes. 
\end{rem}

For instance, if $G$ is a simple connected graph, then $G$ admits at most 2 pivot equivalence classes since two nodes are pivot equivalent if by definition they are connected by a {\it walk} of even length (see the definition of a walk in \rs{toricedgeideal}). Indeed, one may observe that $G$ admits only one pivot equivalence class if and only if $G$ contains an odd cycle. It follows that if $G$ is not connected, then $p=c+c_0$, where $c_0$ is the number of connected components of $G$ that contain no odd cycles. Hence $\ell(I(G))=n-p+c = n-c_0$ as in \rr{spread}.

%%%%%%%%%%%%%%%%%%%%%%%%%%%%%%%%%%%%%%%%%
\section{The $j$-multiplicity of edge ideals and edge subrings}\label{S:edgeidealedgesubring}

As in the previous section, let $I(G)\subset R=k[x_1,\dots, x_n]_{(x_1,\dots, x_n)}$ be the edge ideal of an $m$-uniform  hypergraph $G$ on $n$ nodes. Then the {\it edge subring} of $G$, denoted by $k[G]$,  is the subalgebra of $R$ generated by the edges of $G$. In other words,
$$
k[G]:= k[x_{i_1}\cdots x_{i_m} \ | \  \{x_{i_1}, \ldots, x_{i_m} \} \in E(G)] \subset R.
$$ 
Note that the edge subring of $G$ is a graded algebra generated in degree $m$, thus it can be regarded as a standard graded algebra by assigning degree 1 to its generators. The Hilbert-Samuel multiplicity of the edge subring with respect to this grading is denoted by $e(k[G])$.  Let $G$ be an $m$-uniform hypergraph on $n$ nodes with properly connected components. Then there is a natural  homogeneous  isomorphism between edge subring $k[G]$  and the special fiber ring of the edge ideal of $G$. Therefore, the Krull dimension of $k[G]$ is the analytic spread of $I(G)$. Hence by \rp{HyperSpread} we obtain,
\begin{rem}\label{R:DimEdge}
If $G$ is an $m$-uniform hypergraph with properly connected components, then
$$
\dim k[G] = n-p+c,
$$
where $n$ is the number of nodes,  $p$ is the number pivot equivalence classes  and $c$ is the number of connected components of $G$. 
\end{rem}

If $G$ is a simple graph on $n$ nodes in which all connected components contain an odd cycle, then  $\Vol_{n-1} (F(G))$ is equal to $2^{c-1}e(k[G])$  by \cite[Theorem 4.9]{GilVal}. Therefore, $j(I(G))=2^{c}e(k[G])$ by  \rc{JonathanJack}. The following result is an extension of this statement to $m$-uniform hypergraphs. Our proof is an algebraic argument that does not rely on the relation between multiplicities and volumes. % of the Newton polytope. 
We begin with the case that $G$ is properly connected.

\begin{thm}\label{T:edgesubring}
Let $G$ be a properly connected $m$-uniform hypergraph. If $j(I(G))\not = 0$,  then
$$j(I(G))= m \cdot e(k[G]).$$
\end{thm}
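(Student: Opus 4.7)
The plan is to identify the special fiber ring with $k[G]$ and then reduce the theorem to a general multiplicity identity for ideals generated in a single degree.

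First, since $j(I(G))\neq 0$, Propositions~\ref{P:HyperSpread} and~\ref{P:uniformpositivity} give $\ell(I(G)) = n$, and the canonical surjection sending $x_{i_1}\cdots x_{i_m} + I(G)^2 \in \gr_{I(G)}(R)_1$ to the corresponding generator of $k[G]_1$ descends to a graded isomorphism $\mathcal{F}(I(G)) = \gr_{I(G)}(R)/\m\gr_{I(G)}(R)\cong k[G]$. In particular $\dim k[G] = n$ and $e(\mathcal{F}(I(G))) = e(k[G])$. The theorem therefore reduces to the following general fact: for an ideal $I$ in a standard graded Noetherian local ring $(R,\m)$ of dimension $n$, generated in a single $R$-degree $m$ and with $\ell(I) = n$, one has $j(I) = m\cdot e(\mathcal{F}(I))$.

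To prove this fact, I would exploit the bigrading on $\mathcal{G}:=\gr_I(R)$ by $R$-degree $d$ and Rees degree $k$ induced by the single-degree generation of $I$: one has $\mathcal{G}_{k,d}=0$ for $d<mk$, $\mathcal{G}_{k,d}=(I^k)_d$ for $mk\le d<m(k+1)$, and $\mathcal{G}_{k,d}=(I^k)_d/(I^{k+1})_d$ for $d\ge m(k+1)$. The aim is to establish an asymptotic
\[
\lambda_R\bigl(\Gamma_\m(\mathcal{G}_k)\bigr) = m\cdot\dim_k k[G]_k + O(k^{n-2})\qquad(k\to\infty),
\]
from which $j(I) = m\cdot e(\mathcal{F}(I))$ follows upon dividing by $k^{n-1}$ and multiplying by $(n-1)!$.

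The main obstacle is making this asymptotic rigorous. The naive expectation that each of the $m$ low degree-strata $\mathcal{G}_{k,mk+s}$ for $0\le s<m$ lies in $\Gamma_\m(\mathcal{G}_k)$ (which would immediately contribute $m$ copies of $k[G]_k$) is \emph{false}: for instance, in the triangle case ($n=3$, $m=2$, with $e(k[G])=1$ and $j(I(G))=2$) one finds $\lambda_R(\Gamma_\m(\mathcal{G}_k))=k^2-k+1$ while $m\cdot\dim_k k[G]_k=(k+1)(k+2)$; these polynomials agree in leading order but differ below. The correct approach is therefore to exhibit inside each low-degree stratum a subspace of asymptotic size $\dim_k k[G]_k$ that does lie in $\Gamma_\m$, and to bound the contributions from strata of $R$-degree $\ge m(k+1)$. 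Here the hypotheses that $G$ is properly connected and that every connected component has all its nodes pivot equivalent enter through their combinatorial control of the Newton polyhedron $P(I(G))$, ensuring that the projections of $\N A_{k+1}$ onto coordinate hyperplanes cover the needed shadows up to lower-order error.
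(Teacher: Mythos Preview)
Your reduction to a ``general fact'' about equigenerated ideals is incorrect: the claim that $j(I)=m\cdot e(\cF(I))$ holds for any ideal generated in a single degree $m$ with $\ell(I)=n$ is \emph{false}. The paper itself provides a counterexample in the remark following the theorem (the $3$-uniform hypergraph on $8$ nodes in \rf{example1}, which is connected but not properly connected). There the edge ideal is generated in degree $m=3$, has $j(I(G))=6\neq 0$ and hence $\ell(I(G))=8=n$, yet $e(k[G])=1$, so $m\cdot e(k[G])=3\neq 6$. Thus the properly connected hypothesis cannot be dropped, and the theorem does not reduce to the general statement you formulate. Your closing paragraph partly acknowledges this by reintroducing the combinatorial hypotheses, but that contradicts the reduction you just made, and the proposed ``correct approach'' via Newton polyhedra remains a sketch with no actual argument.

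The paper's proof proceeds quite differently and more structurally. It uses the associativity formula for multiplicities to obtain
\[
j(I)=\lambda(\cG_{\m\cG})\cdot e(k[G]),
\]
valid because $\cG/\m\cG\cong k[G]$ is a domain, so $\m\cG$ is the unique minimal prime in the support of $\Gamma_\m(\cG)$ of dimension $n$. (This identity holds for any connected $m$-uniform hypergraph with $j(I(G))\neq 0$; see \rp{inequalityedgesubring}.) The entire content of the properly connected hypothesis is then concentrated in the computation $\lambda(\cG_{\m\cG})=m$: via \rl{ReesPivot}, pivot equivalence of all nodes forces $(x_i)\cR_{\m\cR}=(x_j)\cR_{\m\cR}$ for all $i,j$, so $\m\cG_{\m\cG}$ is principal, and one checks directly that $I\cR_{\m\cR}=\m^m\cR_{\m\cR}$. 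In the non--properly-connected counterexample the length is $6$ rather than $3$, which is exactly where your general claim breaks down. If you want to salvage your approach, the right target is to prove $\lambda(\cG_{\m\cG})=m$ rather than an asymptotic length estimate, and for that the localization argument via the Rees algebra is far cleaner than any bigraded bookkeeping.
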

\begin{proof}
 Let $I$ denote the edge ideal of $G$ and assume $j(I) \not = 0$. Then $j(I)=e(\Gamma_{\m}(\cG))$ by definition, where $\cG$ is the associated graded ring of $R$ with respect to $I$, and $\m$ is the maximal ideal $(x_1,\dots, x_n)R$. By the associativity formula for multiplicities of graded modules over graded algebras,  
%\footnote{Associative formula for the multiplicity of  $\Gamma_{\m}(\cG)$ might be they key to prove the volume formula for the $j$-multiplicity, at least in the case that each compact face comes from a uniform hypergraph. Need to understand what are the  minimal primes $P$ of $\cG=\cR/I\cR$ of dimension $n$ containing $\m$, and compute $ \lambda (\cG_P )$}
$$
e(\Gamma_{\m}(\cG)) = \sum \lambda ((\Gamma_{\m}(\cG))_P )\cdot e(\cG/P),
$$
where $\lambda$ denotes the length, and the sum runs over all minimal primes $P$ in the support of $\Gamma_{\m}(\cG)$ of dimension $n$. Recall the special fiber ring $\cG/\m\cG$ is isomorphic to $k[G]$, which is a domain. Therefore, $\m \cG$ is a prime ideal of $\cG$ of dimension $n$, since $\dim\, \cG/\m\cG = \ell(I) = n$ by  \rr{jvanishing}. Moreover, $\m \cG$ is in the support of $\Gamma_{\m}(\cG)$ and any prime ideal in the support of $\Gamma_{\m}(\cG)$ contains $\m \cG$  as some power of $\m\cG$ annihilates  $\Gamma_{\m}(\cG)$. Thus, $\m \cG$ is the only minimal prime in the support of $\Gamma_{\m}(\cG)$ of dimension $n$. Therefore, 
$$
j(I) = e(\Gamma_{\m}(\cG)) =  \lambda((\Gamma_{\m}(\cG))_{\m \cG}) \cdot e(\cG/\m \cG) = \lambda(\cG_{\m \cG}) \cdot e(k[G]).
$$
It remains to show that $\cG_{\m \cG}$ has length $m$. 
Let $\cR$ denote the Rees algebra of $I$, which is defined as
$$\cR = R[It]=R[x_{i_1}\cdots x_{i_m}t \ | \  \{x_{i_1}, \ldots, x_{i_m} \} \in E(G)].$$
Then $\cG=\cR/I \cR$ and so
$\cG_{\m \cG} \simeq \cR_{\, \m \cR}/I \cR_{\, \m \cR}$. We claim that the ideal $\m \cG_{\m \cG}  = \m \cR_{\, \m \cR}/I \cR_{\, \m \cR}$ is  principal. Since $G$ is properly connected and $j(I)$ is not zero, any two nodes  $x_i$ and $x_j$ in $G$  are pivot equivalent by \rp{uniformpositivity}. Then by  \rl{ReesPivot} below we have
 $(x_i)\cR_{\, \m \cR} = (x_j)\cR_{\, \m \cR}$. Thus, $\m \cR_{\, \m \cR} = (x_i) \cR_{\, \m \cR}$ for any node $x_i$ in $G$, which proves the claim.  Let $\{x_{i_1}, \ldots, x_{i_m}\}$ be an edge in $G$. Then  $$\m^m \cR_{\, \m \cR}  =(x_{i_1}) \cR_{\, \m \cR} \cdots (x_{i_m}) \cR_{\, \m \cR} =(x_{i_1}\cdots x_{i_m})\cR_{\, \m \cR} \subset  I \cR_{\, \m \cR} \subset  \m^m \cR_{\, \m \cR}.$$
Thus  $I \cR_{\, \m \cR} = \m^m \cR_{\, \m \cR} $. Hence, the principal ideal
$$\m^k \cG_{\m \cG}  = (\m^k +I) \cR_{\, \m \cR}/I \cR_{\, \m \cR}$$
 is zero if and only if $k \geq m$. Therefore,
$$
\lambda(\cG_{\m \cG} ) = \sum_{k=1}^{m}  \lambda(\m^{k-1} \cG_{\m \cG}/  \m^k \cG_{\m \cG})
= m.
$$
%since $\lambda(\m \cG_{\m \cG}/  \m^2 \cG_{\m \cG}) $ is the minimum number of generators of $\m \cG_{\m \cG}$.
\end{proof}

\begin{lem}\label{L:ReesPivot}
Let $G$ be an $m$-uniform hypergraph. Let $\cR$ denote the Rees algebra of the edge ideal of $G$. If $x_i$ and $x_j$ are two nodes in $G$ that are pivot equivalent, then $(x_i) \cR_{\, \m \cR}=(x_j) \cR_{\, \m \cR}$.
\end{lem}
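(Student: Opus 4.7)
The plan is to reduce the problem to the atomic pivot relation $x_i \approx x_j$ via the transitive closure defining $\sim$, and then to construct an explicit unit in $\cR_{\m\cR}$ that converts $x_i$ into a multiple of $x_j$ using two edges of $G$ that share all but one node.

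In the atomic case, I would fix edges $e_1 = \{x_i\}\cup A$ and $e_2 = \{x_j\}\cup A$ witnessing $x_i \approx x_j$, set $m_A = \prod_{a \in A} a$, and consider the corresponding minimal monomial generators $f_1 = x_i m_A$ and $f_2 = x_j m_A$ of $I = I(G)$. The key identity is $x_i f_2 = x_i x_j m_A = x_j f_1$ in $R$, which upgrades in the Rees algebra $\cR = R[It]$ to
\[
x_i \cdot (f_2 t) \;=\; x_j \cdot (f_1 t).
\]

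The crux of the argument is showing that $f_1 t$ and $f_2 t$ are units in $\cR_{\m\cR}$, which reduces to checking that neither lies in $\m\cR$. For this I would invoke the standard description of the special fiber ring $\cR/\m\cR$ as the edge subring $k[G]\subset R$, under the map sending $\overline{f t}$ to $f$ for $f \in I$: since $k[G]$ is a subring of the domain $R$, the ideal $\m\cR$ is prime, and since $f_1, f_2$ are two of the (nonzero) algebra generators of $k[G]$, the classes $\overline{f_1 t}$ and $\overline{f_2 t}$ are nonzero in $\cR/\m\cR$. Hence $f_1 t, f_2 t \notin \m\cR$ and both become units in $\cR_{\m\cR}$. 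The displayed identity then yields $x_i = x_j \cdot (f_1 t)(f_2 t)^{-1}$ inside $\cR_{\m\cR}$, so $x_i \in (x_j)\cR_{\m\cR}$; the symmetric argument gives the reverse inclusion, finishing the atomic case, and transitivity extends the equality to arbitrary pivot-equivalent nodes.

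I do not anticipate a genuine obstacle. The only technical step is the identification of $\cR/\m\cR$ with $k[G]$ under $\overline{f t} \mapsto f$, which is a routine fact for monomial ideals and is already tacitly used in the proof of \rt{edgesubring} above; everything else reduces to the elementary observation that a two-term monomial identity forced by a shared ``pivot'' translates into a unit relation once one inverts elements outside the fiber-cone maximal ideal.
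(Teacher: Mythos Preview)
Your proof is correct and is essentially the same as the paper's: both reduce to the atomic relation $x_i\approx x_j$, use the identity $x_i\cdot(f_2 t)=x_j\cdot(f_1 t)$ in $\cR$ coming from two edges sharing $m-1$ nodes, and observe that $f_1t,f_2t\notin\m\cR$ (hence are units in $\cR_{\m\cR}$) because they are minimal generators of $I$; transitivity then handles general pivot equivalence. The only cosmetic difference is that the paper asserts $f_k t\in\cR\setminus\m\cR$ directly, whereas you justify it via the identification $\cR/\m\cR\cong k[G]$.
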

\begin{proof}
Note that if $\{x_{i_1}, \ldots, x_{i_m}\}$ is an edge in $G$, then $x_{i_1}\cdots x_{i_m} t \in \cR \setminus \m \cR$. 
Hence $x_{i_1}\cdots x_{i_m} t$ is invertible in $\cR_{\, \m \cR}$. If $x_{i}\approx x_{j}$, then there is a subset $A \subset \{x_1, \ldots, x_n\}\setminus \{x_i, x_j\}$, such that $\{x_i\}\cup A$ and  $\{x_j\}\cup A$ belong to $E(G)$. Write $A=\{ x_{p_1}, \ldots, x_{p_{m-1}} \}$.
Then  $x_{p_1}\cdots x_{p_{m-1}}x_it$ and $x_{p_1}\cdots x_{p_{m-1}}x_jt$  are invertible in the localization $\cR_{\, \m \cR}$. Therefore, 
$$
\frac{x_i}{1}= \frac{x_ix_{p_1}  \cdots x_{p_{m-1}} t}{ x_{p_1}\cdots x_{p_{m-1}}x_jt} \cdot \frac{x_j}{1},
$$
which implies that $(x_i)\cR_{\, \m \cR} = (x_j)\cR_{\, \m \cR}$.  If $x_i$ and $x_j$ are pivot equivalent, then there is a sequence of nodes $x_{i_1}, \ldots x_{i_r}$ such that
$$
x_i = x_{i_1}\approx x_{i_2} \approx  \cdots \approx  x_{i_r} =x_j.
$$
Hence by what we observed earlier,
$$(x_i) \cR_{\, \m \cR}=(x_{i_1}) \cR_{\, \m \cR} = \cdots = (x_{i_r}) \cR_{\, \m \cR} = (x_j) \cR_{\, \m \cR}.$$
\end{proof}

\begin{rem}
The converse of \rl{ReesPivot} is not true in general.  Indeed, if $(x_i) \cR_{\, \m \cR} = (x_j) \cR_{\, \m \cR}$, then one can show that there are two subsets of $E(G)$, with associated square-free monomials $\{m_1, \ldots m_s\}$ and $\{m'_1, \ldots m'_s\}$ in $I(G)$, such that 
\begin{equation}\label{convlem1}
x_i\, m_1 \cdots m_s = x_j\, m'_1 \cdots m'_s.
\end{equation}
But we cannot conclude that $x_i$ and $x_j$ are pivot equivalent. For example,  let $G$ be a $3$-uniform hypergraph with  $V(G)=\{x, y, z, w, x_1,x_2,x_3\}$ and $E(G)$  the triangles in the simplicial
complex illustrated in \rf{example2}.  Then one may directly verify that
\begin{equation}\label{convlem2}
w(xx_1x_2)(xx_1x_3)(xx_2x_3)(yzw)=x(xywt)(xzw)(x_1x_2x_3)^2.
\end{equation}
Note that the expression in each parenthesis in (\ref{convlem2}) corresponds to an edge in $G$, hence it is invertible in $\cR_{\, \m \cR}$ after multiplying by the variable $t$. Therefore, $(w)\cR_{\, \m \cR} = (x)\cR_{\, \m \cR}$. However, $x$ and $w$ are not pivot equivalent.  It would be interesting to find a combinatorial interpretation of (\ref{convlem1})  in graph-theoretical terms.

\begin{figure}[h]
\includegraphics*[height=5.2cm]{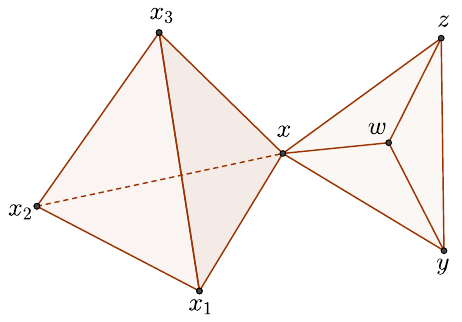}
\caption{The boundary of a tetrahedron attached to a union of three triangles.}
\label{F:example2}
\end{figure}
\end{rem}

Now we consider the case that $G$ has more than one properly connected component.

\begin{thm}\label{T:cedgesubring}
Let $G$ be an $m$-uniform hypergraph with properly connected components.  If $c$  is the number of components  and $j(I(G))$ is not zero,  then 
$$j(I(G))= m^c e(k[G]).$$
\end{thm}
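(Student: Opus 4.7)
The plan is to combine the multiplicativity of $j$-multiplicity over connected components (\rp{product}) with the single-component formula (\rt{edgesubring}), and then reduce the product of the component edge subring multiplicities to the multiplicity of $k[G]$ itself via a tensor product identification.

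First I would invoke \rp{product} to write
$$j(I(G))=j(I(G_1))\cdots j(I(G_c)).$$
Since $j(I(G))\neq 0$ by hypothesis, each factor $j(I(G_i))$ must be nonzero. Each $G_i$ is properly connected, so \rt{edgesubring} applies to give $j(I(G_i))=m\cdot e(k[G_i])$, and hence
$$j(I(G))=m^c\cdot e(k[G_1])\cdots e(k[G_c]).$$

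The remaining task is to show $e(k[G])=e(k[G_1])\cdots e(k[G_c])$. Let $V(G_i)\subset\{x_1,\dots,x_n\}$ be the node set of the $i$th component. Because the $V(G_i)$ are pairwise disjoint and every edge of $G$ lies entirely within some $V(G_i)$, the edge monomials generating $k[G]$ split into $c$ groups involving pairwise disjoint variables. I would then show that the natural $k$-algebra map
$$\varphi:k[G_1]\otimes_k\cdots\otimes_k k[G_c]\longrightarrow k[G]$$
is an isomorphism of standard graded $k$-algebras (assigning degree $1$ to each edge monomial): surjectivity is immediate from the generation of $k[G]$ by the edge monomials, while injectivity follows from the fact that a tensor of monomials on disjoint variable sets maps to a monomial that uniquely determines its component factors. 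The grading is respected since each edge monomial has degree $1$ on both sides.

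Once $\varphi$ is identified as a graded isomorphism, the Hilbert series of $k[G]$ factors as $H_{k[G]}(t)=\prod_{i=1}^c H_{k[G_i]}(t)$. Writing each factor in reduced form $H_{k[G_i]}(t)=P_i(t)/(1-t)^{d_i}$ with $d_i=\dim k[G_i]$ and $P_i(1)=e(k[G_i])$, we get
$$H_{k[G]}(t)=\frac{P_1(t)\cdots P_c(t)}{(1-t)^{d_1+\cdots+d_c}},$$
and since $\dim k[G]=\sum d_i$ (which follows independently from \rr{DimEdge} and the fact that pivot classes of $G$ are unions of pivot classes of the $G_i$), this is the reduced form for $k[G]$. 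Evaluating the numerator at $t=1$ yields $e(k[G])=\prod_{i=1}^c e(k[G_i])$. Substituting back gives $j(I(G))=m^c\cdot e(k[G])$, as desired.

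The main technical point is verifying the graded tensor product identification and confirming the dimension additivity so that the product of Hilbert series indeed computes the multiplicity of $k[G]$; once that is in hand, the rest is assembling earlier results.
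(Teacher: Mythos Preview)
Your proposal is correct and follows essentially the same approach as the paper: factor $j(I(G))$ over components via \rp{product}, apply \rt{edgesubring} to each component, and then identify $k[G]\simeq k[G_1]\otimes_k\cdots\otimes_k k[G_c]$ to deduce $e(k[G])=\prod_i e(k[G_i])$. The only difference is that where you spell out the Hilbert series factorization directly, the paper invokes Northcott's theorem on the Hilbert function of a tensor product for the same conclusion.
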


\begin{proof}
Let  $G_1, \ldots, G_c$ denote the connected components of $G$. Then by \rp{product} and \rt{edgesubring} we obtain $j(I(G))= m^c e(k[G_1]) \cdots  e(k[G_c])$.  Therefore, the result follows from the main theorem of  \cite{North} which implies $e(k[G_1]) \cdots e(k[G_c])=e(k[G])$  since  $ k[G_1] \otimes_k \cdots \otimes_k k[G_c]  \simeq k[G]$. 
\end{proof}

Below we also sketch a direct proof of \rt{cedgesubring}  without using the multiplicativity formula in \rp{product} and the main result of \cite{North}.  

\begin{proof}
Let $\cG$ be the associated graded ring of $R$ with respect to the edge ideal $I$ of $G$. Then, as in the proof of  \rt{edgesubring},
$$
j(I)= \lambda(\cG_{\m \cG}) \cdot e(k[G]).
$$
We need to show that $\cG_{\m \cG}$ has length $m^c$. Recall that $\cG=\cR/I \cR$, where $\cR$ is the Rees algebra of $I$. Thus
$\cG_{\m \cG} \simeq \cR_{\, \m \cR}/I \cR_{\, \m \cR}$. 
Now let $X_k\subset\{x_1,\dots, x_n\}$ be the set of the nodes of the connected component $G_k$, 
so $\{x_1,\dots, x_n\}$ is the disjoint union of  $X_1,\dots, X_c$. After a possible relabeling of the nodes
we may assume that $x_k\in X_k$ for $k=1, \ldots,c$. Then  \rl{ReesPivot}
implies 
$(X_k) \cR_{\, \m \cR}= (x_k) \cR_{\, \m \cR}$ for  $k=1, \ldots, c$. Therefore,
$$\m \cG_{\m \cG}  = (x_1, \ldots, x_c) \cG_{\m \cG} .$$
Also $(x_k^m) \cG_{\m \cG}=(0)$ for all $k=1, \ldots, c$ as in the proof of  \rt{edgesubring}. Thus, by the pigeonhole principle
$$\m^{c(m-1)+1} \cG_{\m \cG}=(0).$$ 
Furthermore,  it can be readily seen that  for $i = 0, \ldots, c(m-1)$ the ideal 
$\m^i \cG_{\m \cG}$ is minimally generated by  monomials $x_1^{a_1} \cdots x_c^{a_c}$ of degree $i$ such that the $a_k$ are less than $m$. Therefore
$$
\lambda(\cG_{\m \cG} ) =\sum_{i=0}^{c(m-1)} \lambda(\m^{i} \cG_{\m \cG}/  \m^{i+1} \cG_{\m \cG}) 
$$
is the number of all monomials $x_1^{a_1} \cdots x_c^{a_c}$ such that the $a_k$ are less than $m$, which is $m^c$.
\end{proof}

\begin{ex}\label{ex:multipartite}
Let $G$ be the complete multipartite graph on $n$ nodes of type $(q_1, \ldots, q_k)$.  If $k$ is at least 3, then by \cite[Corollary 2.7]{Hibi} and  \rt{edgesubring} we obtain
$$
j(I(G))=2e(k[G])= 2^{n} - 2\sum_{i=1}^k \sum_{j=1}^{q_i} {n-1 \choose j-1}.
$$
\end{ex}

The following result is an immediate consequence of \rt{cedgesubring} and \rc{JonathanJack}
\begin{cor}\label{C:edgepolytope}
Let $G$ be an $m$-uniform hypergraph on $n$ nodes with properly connected components. If  $G$ has $c$ connected components
and $\Vol_{n-1}(F(G)) \not = 0$, equivalently, if the nodes in each connected component of $G$ are pivot equivalent, then
$$e(k[G])=m^{c-1}\Vol_{n-1}(F(G)).$$
\end{cor}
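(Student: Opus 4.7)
The plan is to observe that this corollary follows by equating the two formulas for $j(I(G))$ established earlier in the excerpt. First I would justify that the hypothesis $\Vol_{n-1}(F(G))\neq 0$ is indeed equivalent to the pivot-equivalence condition on the connected components of $G$. Since $F(G)$ is the unique maximal compact face of $P(G)$ and it lies in the hyperplane $z_1+\cdots+z_n=m$, we have $\dim F(G)\leq n-1$, and $\Vol_{n-1}(F(G))\neq 0$ exactly when $\dim F(G)=n-1$, i.e.\ when $F(G)$ is a compact facet of $P(G)$. By \rr{Bivia} this is equivalent to $j(I(G))\neq 0$, and by \rp{uniformpositivity} that in turn is equivalent to the nodes in each connected component of $G$ being pivot equivalent.

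Assuming this hypothesis, I would apply \rc{JonathanJack}, which gives the single-term formula
$$j(I(G))=m\cdot\Vol_{n-1}(F(G)),$$
since $F(G)$ is the unique compact facet of $P(G)$ at lattice distance $h(F(G))=m$ from the origin. On the other hand, because each connected component of $G$ is properly connected and $j(I(G))\neq 0$, \rt{cedgesubring} is applicable and yields
$$j(I(G))=m^{c}\,e(k[G]).$$

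Setting the two expressions equal and dividing by $m$ (which is a positive integer since $G$ is $m$-uniform with $m\geq 1$) gives exactly
$$e(k[G])=m^{c-1}\,\Vol_{n-1}(F(G)),$$
as claimed. There is no substantive obstacle here; the only thing that requires any care is the preliminary verification that the three conditions — nonvanishing of $\Vol_{n-1}(F(G))$, nonvanishing of $j(I(G))$, and pivot equivalence of nodes within each component — are mutually equivalent, so that the hypotheses of both \rc{JonathanJack} (implicitly, that $F(G)$ is a facet) and \rt{cedgesubring} (that $j(I(G))\neq 0$) are simultaneously met.
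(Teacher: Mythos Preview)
Your proposal is correct and follows exactly the approach the paper intends: the corollary is stated there as an immediate consequence of \rt{cedgesubring} and \rc{JonathanJack}, and you have simply spelled out the equating of the two expressions for $j(I(G))$ together with the equivalence of the nonvanishing hypotheses.
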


\begin{rem}
Note that in \rt{edgesubring}, if we do not assume $G$ is  properly connected then the statement fails, as the following example illustrates. Here $G$ is a connected $3$-uniform hypergraph with 
$V(G)=\{x_1,x_2,x_3,x_4,x_5,y_1,y_2,y_3\}$. The edge set $E(G)$ is given by the triangles in the simplicial
complex represented in \rf{example1}. 
\begin{figure}[h]
\includegraphics*[height=5.2cm]{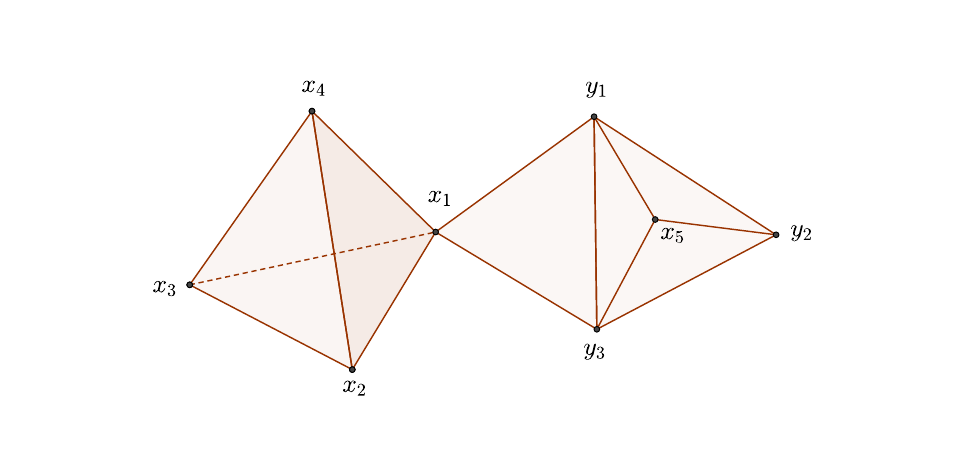}
\caption{The boundary of a tetrahedron attached to a union of four triangles.}
\label{F:example1}
\end{figure}
Note that $G$ has 8 nodes and 8 edges, and the incidence matrix $M(G)$ is
a square $8\times 8$ matrix of full rank. A simple calculation provides
$$j(G)=\Vol_8(\Pyr(F(G)))=\det M(G)=6.$$
 On the other hand, as in the proof of \rp{muniformunicyclic}, one can see that the edge ring $k[G]$ is isomorphic to a polynomial ring over a field,  and so $e(k[G]) = 1$, which shows that  \rt{edgesubring} fails for not properly connected hypergraphs. We can also calculate $j(G)$ directly as in the proof of \rt{edgesubring}. Recall that 
$$j(G)=e(\Gamma_{\m}(\cG)) = \lambda(\cG_{\m \cG}) \cdot e(k[G])=\lambda(\cG_{\m \cG}).$$
Let us show that the length of  $\cG_{\m \cG}\simeq \cR_{\, \m \cR}/I \cR_{\, \m \cR}$ is 6. 
First, note that $G$ has two pivot classes $\{x_1,x_2,x_3,x_4,x_5\}$ and $\{y_1,y_2,y_3\}$.
Then by  \rl{ReesPivot}  we have
$(x_i)\cR_{\, \m \cR} = (x_1)\cR_{\, \m \cR}$ for $i=1, \ldots 5$, and $(y_j)\cR_{\, \m \cR}= (y_1)\cR_{\, \m \cR}$ for  $j=1,2, 3$. Thus
$\m \cR_{\, \m \cR} = (x_1,y_1) \cR_{\, \m \cR}$. 
Using edges $\{x_1, y_1, y_3\}$ and $\{x_1, x_2, x_4\}$ we have $ x_2x_4 (x_1y_1y_3t)=y_1y_3 (x_1x_2x_4t) $. Hence we may write  
$$(x_1^2)\cR_{\, \m \cR} = (x_2x_4)\cR_{\, \m \cR}  =  (y_1y_3)\cR_{\, \m \cR} =  (y_1^2)\cR_{\, \m \cR}.$$
This implies that $\m^2 \cR_{\, \m \cR} = (x_1^2, x_1y_1) \cR_{\, \m \cR}$, $\m^3 \cR_{\, \m \cR} = (x_1^3, y_1^3) \cR_{\, \m \cR}$ and $\m^4 \cR_{\, \m \cR} = (x_1^4, x_1^3y_1)  \cR_{\, \m \cR}$.
Note that $(x_1^3) \cR_{\, \m \cR} = (x_1x_2x_3) \cR_{\, \m \cR} \subset I \cR_{\, \m \cR} $. Therefore, $\m^4\cG_{\m \cG}=(0)$ and $\m^{i} \cG_{\m \cG}/  \m^{i+1} \cG_{\m \cG}$ for $i=0,1,2, 3$ have bases
$\{1\}$, $\{x_1,y_1\}$, $\{x_1^2,x_1y_1\}$, and $\{y_1^3\}$, respectively. Thus,
$$
\lambda(\cG_{\m \cG} ) =\sum_{i=0}^{3} \lambda(\m^{i} \cG_{\m \cG}/  \m^{i+1} \cG_{\m \cG})=1+2+2+1=6.
$$

\end{rem}

%%%%%%%%%%%%%%%%%%%%%%%%%%%%%%%%%%%%%%%%%
\section{The $j$-multiplicity of edge ideals and toric edge ideals}\label{S:toricedgeideal}

Let $I(G)$ be the edge ideal of an $m$-uniform hypergraph  $G$ on $n$ nodes $x_1, \ldots, x_n$. 
As we mentioned in the previous section, the associated  edge subring  $k[G]$ can be regarded as a standard graded algebra over $k$. Therefore, we may define a homogeneous epimorphism of $k$-algebras
$$
\phi: S=k[T_{i_1 \cdots i_m} \ | \  \{x_{i_1}, \ldots, x_{i_m}\} \in E(G)] \to k[G],
$$
where the $T_{i_1 \cdots i_m}$ are indeterminates over $k$, by assigning $\phi(T_{i_1 \cdots i_m})=x_{i_1} \cdots x_{i_m}$ for $\{x_{i_1}, \ldots, x_{i_m}\} \in E(G)$. Thus one obtains a homogeneous  isomorphism $k[G] \simeq S/I_G$, where $I_G=\ker (\phi)$ is a homogeneous prime ideal  called the {\it toric edge ideal} of $G$. Indeed,  the ideal $I_G$ is generated by binomials, defining an affine toric variety \cite{Strumfels1}.

\begin{prop}\label{P:heighttoric}
Let $G$ be an $m$-uniform hypergraph on $n$ nodes with  properly connected components. Let $e$ denote the number of edges,
$p$  the number of pivot equivalence classes and $c$  the number of connected components of $G$. Then
$$
{\rm ht} \, I_G =e-n+p-c.
$$
\end{prop}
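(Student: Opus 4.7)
The plan is to compute the height of $I_G$ as the codimension of the quotient $S/I_G\cong k[G]$ inside the polynomial ring $S$, and then to invoke the dimension formula for $k[G]$ that was already established in \rr{DimEdge}.

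First I would note that $S$ is a polynomial ring over $k$ in exactly $e$ indeterminates, one for each edge of $G$, so $\dim S=e$. Since $S$ is a finitely generated $k$-algebra that is a domain, it is universally catenary (in fact Cohen--Macaulay), and the prime ideal $I_G$ satisfies
$$\operatorname{ht} I_G=\dim S-\dim(S/I_G)=e-\dim k[G],$$
where the isomorphism $S/I_G\cong k[G]$ from the definition of the toric edge ideal is used in the second equality.

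Next I would apply \rr{DimEdge}, which gives $\dim k[G]=n-p+c$ under the hypothesis that every connected component of $G$ is properly connected. Substituting this into the formula above yields
$$\operatorname{ht} I_G=e-(n-p+c)=e-n+p-c,$$
as desired.

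There is essentially no obstacle: the only non-trivial input is \rr{DimEdge}, which in turn relies on \rp{HyperSpread} and the identification of $k[G]$ with the special fiber ring of $I(G)$. The rest is the standard codimension computation for a prime ideal in a polynomial ring, and no further combinatorial argument is needed.
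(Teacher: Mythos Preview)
Your proof is correct and follows essentially the same approach as the paper: both compute $\operatorname{ht} I_G=\dim S-\dim k[G]=e-\dim k[G]$ and then substitute the formula $\dim k[G]=n-p+c$ from \rr{DimEdge} (equivalently, \rp{HyperSpread}). The only cosmetic difference is that the paper phrases the intermediate step via $\ell(I(G))$, while you justify the codimension equality by noting that $S$ is Cohen--Macaulay and $I_G$ is prime.
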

\begin{proof}
Recall that $\dim k[G] =\ell(I(G))$ by \rr{DimEdge}.  Thus, one can compute the height of the toric edge ideal of $G$ as
$$
{\rm ht} \, I_G=\dim S - \dim k[G] = e-\ell(I(G)).
$$
If all connected components of $G$  are properly connected then $\ell(I(G)) =n-p+c$ by  \rp{HyperSpread} and the result follows.
\end{proof}

Recall that if $j(I(G)) \not = 0$, then by  \rr{iso} the number of edges of $G$ is at least the number of nodes of $G$. The following result deals with the extremal case and extends \rp{unicyclic} to $m$-uniform hypergraphs.

\begin{prop}\label{P:muniformunicyclic}
Let $G$ be an $m$-uniform hypergraph  with properly connected components.
Assume the number of edges of $G$ is equal to the number of nodes of $G$.  If  $G$ has $c$ connected components and   $j(I(G)) \not = 0$,  then 
$$j(I(G))= m^{c}.$$
\end{prop}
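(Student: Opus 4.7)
The plan is to show that under the assumption $e=n$ together with $j(I(G))\neq 0$, the edge subring $k[G]$ is actually a polynomial ring over $k$, so that $e(k[G])=1$, and then apply \rt{cedgesubring} to conclude $j(I(G))=m^c$.

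First I would extract the relation between the pivot classes and the components from the hypothesis $j(I(G))\neq 0$. By \rp{uniformpositivity}, the nodes inside each connected component of $G$ are all pivot equivalent. In other words, each of the $c$ connected components contributes exactly one pivot equivalence class to $G$, so $p=c$. Plugging this into the formula in \rp{heighttoric} gives
$$\operatorname{ht} I_G = e - n + p - c = e - n = 0.$$
Since $I_G$ is a prime ideal of the polynomial ring $S=k[T_{i_1\cdots i_m}\mid \{x_{i_1},\dots,x_{i_m}\}\in E(G)]$, having height zero forces $I_G=(0)$. Consequently, the presentation $k[G]\simeq S/I_G$ gives $k[G]\simeq S$, a polynomial ring over $k$ (on $e$ generators in degree one once we regrade the edge subring as in \rs{edgeidealedgesubring}). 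In particular, the Hilbert–Samuel multiplicity satisfies $e(k[G])=1$.

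Finally, since $G$ has properly connected components, $c$ components, and $j(I(G))\neq 0$, \rt{cedgesubring} yields
$$j(I(G))= m^{c}\, e(k[G]) = m^{c},$$
which is the desired formula.

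The argument is short and the only potentially delicate step is the identification $p=c$ under the positivity hypothesis; this is where \rp{uniformpositivity} does the work, together with the fact that pivot equivalence classes cannot cross connected components (which follows immediately from the definition of $\approx$, since $x_i\approx x_j$ requires an edge containing both $x_i$ and an appropriate pivot, so both nodes live in the same component). Everything else is a direct substitution into the previously established formulas.
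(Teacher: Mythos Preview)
Your proof is correct and follows essentially the same approach as the paper: use \rp{uniformpositivity} to get $p=c$, apply \rp{heighttoric} to conclude $\operatorname{ht} I_G=0$ and hence $I_G=0$, deduce $e(k[G])=1$, and finish with \rt{cedgesubring}.
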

\begin{proof}
Since all connected components of $G$ are properly connected and  $j(I(G)) \not = 0$, by \rp{uniformpositivity}, each connected component of $G$  admits only one pivot equivalence class. Then by \rp{heighttoric} the toric edge ideal $I_G$ has height zero. Thus $I_G$ is zero. Hence $k[G]$ is isomorphic to a polynomial ring over a field, and thus $e(k[G])=1$. Therefore, by \rt{cedgesubring} we obtain 
$$j(I(G))= m^{c}e(k[G])= m^{c}.$$
\end{proof}

\begin{ex}
If $G$ is the complete $(n-1)$-uniform hypergraph  on $n$ nodes, then $e=n$. In addition, $G$ is properly connected and has only one pivot equivalence class. Therefore, by \rp{muniformunicyclic} we obtain  $j(I(G))=n-1$, as in \rex{CompleteUniform}.
\end{ex}

Recall that a {\it walk} $w$ of length $s$ in  a simple graph $G$ is a sequence of edges of the form
$$\{x_{i_0}, x_{i_1}\}, \, \{x_{i_1}, x_{i_2}\},   \ldots,  \{x_{i_{s-1}}, x_{i_s}\}.$$
A walk  $w$ is called {\it closed} if the initial and the end nodes  $x_{i_0}, x_{i_s}$ are the same.  If $w$ is a closed walk of even length $2l$,  then we call $w$ a {\it monomial walk} and we define
$$T_w=T_{i_0i_1}T_{i_2i_3}\cdots T_{i_{2l-2}i_{2l-1}}-T_{i_1i_2}T_{i_3i_4}\cdots T_{i_{2l-1}i_{2l}} \in S,$$
which belongs to the toric edge ideal $I_G$.  Indeed, the toric edge ideal $I_G$ is generated by  binomials of the form $T_w$ associated to monomial walks in $G$ \cite{Vill1}. More generally, one may define monomial walks in an $m$-uniform hypergraph $G$ such that the toric edge ideal $I_G$ is generated by the associated binomials \cite{Petrovic}. We say a monomial walk $w$ is  {\it nontrivial} if $T_w \not = 0$, and   {\it minimal} if $T_w$ is irreducible. For example, if $G$ is unicyclic with an odd cycle, then  it does not admit a nontrivial  monomial walk, hence $I_G$ is zero as we observed in the proof of \rp{muniformunicyclic}. Two monomial walks $w$ and $w'$ are called {\it equivalent} if  $T_w = T_{w'}$. \\

A simple connected graph $G$  is called {\it bicyclic} if the number of edges is one more than the number of nodes. For instance, if $G$ is a simple graph  obtained by connecting two disjoint cycles with a path, then $G$ is a bicyclic graph known as a {\it bowtie} (\rf{bicycle1}). If $G$ consists of two cycles with a common node, then we regard it as a bowtie graph where the length of the path between the two cycles is zero. The following result computes the $j$-multiplicity of the edge ideals of bicyclic graphs.

\begin{figure}[h]
\includegraphics*[height=4cm]{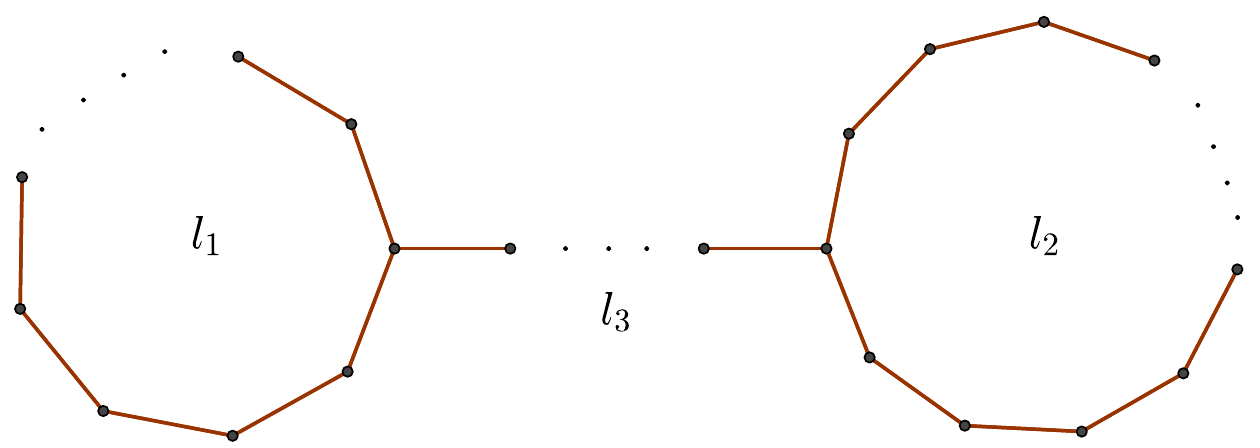}
\caption{A bicyclic graph of type 1.}
\label{F:bicycle1}
\end{figure}

\begin{prop}\label{P:bicyclic}
Let $G$ be an $m$-uniform hypergraph  with properly connected components. 
Assume the number of edges in $G$ is one more than the number of nodes and $G$ has $c$ connected components.
If $j(I(G)) \not = 0$,  then there is a unique nontrivial minimal monomial walk $w$ in $G$ up to equivalence.  Furthermore, if the length of $w$ is $2l$, then 
$$j(I(G))=m^cl.$$
In particular, if $G$ is a  bicyclic graph with an odd cycle, then $j(I(G))$ is the length of the unique nontrivial minimal monomial  walk in $G$.
\end{prop}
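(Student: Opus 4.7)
The plan is to reduce the computation of $j(I(G))$ to determining the multiplicity of $k[G]$, which in this setting will be the degree of a single irreducible binomial generating $I_G$. The bridge between the two sides of the statement is furnished by \rt{cedgesubring} together with \rp{heighttoric}.

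First I would compute the height of the toric edge ideal. Since $j(I(G)) \neq 0$ and each connected component is properly connected, \rp{uniformpositivity} says that every component consists of a single pivot equivalence class, so the total number of pivot classes is $p=c$. Substituting $e = n+1$ and $p = c$ into the formula of \rp{heighttoric} gives $\operatorname{ht} I_G = (n+1)-n+c-c = 1$. Because $k[G]$ is a domain, $I_G$ is a prime ideal of height one in the polynomial ring $S$, which is a unique factorization domain. Consequently $I_G$ is principal.

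Next I would identify a generator of $I_G$ as one of the binomials $T_w$ associated to a monomial walk. Since $I_G$ is generated by such binomials and is principal, at least one $T_w$ must itself generate $I_G$; picking such a $w$ forces $T_w$ to be irreducible (hence $w$ is a nontrivial minimal monomial walk by definition). For uniqueness up to equivalence: any other nontrivial minimal monomial walk $w'$ gives $T_{w'} \in (T_w)$ with $T_{w'}$ irreducible, so $T_{w'} = \lambda T_w$ for some $\lambda \in k^{\times}$; comparing the coefficients $\pm 1$ on both sides gives $\lambda = \pm 1$, which means exactly that $w'$ is equivalent to $w$ (after possibly traversing the walk in reverse, which only changes the sign of $T_w$).

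Finally I would compute $e(k[G])$. Writing $w$ as a closed walk of length $2l$, the binomial
\[
T_w = T_{i_0 i_1} T_{i_2 i_3} \cdots T_{i_{2l-2} i_{2l-1}} - T_{i_1 i_2} T_{i_3 i_4} \cdots T_{i_{2l-1} i_{2l}}
\]
is homogeneous of degree $l$ in the standard grading on $S$. Hence $k[G] \cong S/(T_w)$ is a hypersurface of degree $l$ in a standard graded polynomial ring, so by a direct Hilbert series computation $e(k[G]) = l$. \rt{cedgesubring} then yields
\[
j(I(G)) = m^{c} e(k[G]) = m^{c} l.
\]
The final assertion is the case $m=2$, $c=1$, where $j(I(G)) = 2l$ is exactly the length $2l$ of the unique nontrivial minimal monomial walk.

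The main obstacle is Step 3 concerning uniqueness: one must be careful that ``up to equivalence'' (as defined in \rs{toricedgeideal} by $T_{w} = T_{w'}$) genuinely absorbs the sign ambiguity introduced by the fact that a height-one prime in a UFD is principal only up to unit scalar. This is essentially a bookkeeping issue about orientations of closed walks, but it is where the notion of ``equivalence'' has to be pinned down precisely; the other steps are direct applications of results already proved in the paper.
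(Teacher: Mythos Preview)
Your proof is correct and follows essentially the same route as the paper: use \rp{uniformpositivity} to get $p=c$, apply \rp{heighttoric} to obtain $\operatorname{ht} I_G=1$, conclude that $I_G$ is principal generated by some irreducible $T_w$, compute $e(k[G])=\deg T_w=l$, and finish with \rt{cedgesubring}. Your explicit justification via the UFD property of $S$ and your care with the sign ambiguity in the uniqueness step are in fact slightly more detailed than the paper's own argument, which simply asserts that a height-one prime is principal and that the minimal walk is unique up to equivalence.
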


\begin{proof}
Recall that by \rp{uniformpositivity} $j(I(G))\not = 0$ if and only if  each connected component of $G$ contains only one pivot equivalence class.  Then we have ${\rm ht} \, I_G =e-n+p-c=1$  by \rp{heighttoric}.
Therefore, $I_G$ is a principal prime ideal generated by an irreducible homogeneous binomial $T_w$ corresponding to a unique minimal  monomial walk $w$ in $G$ up to equivalence. Hence,   we obtain $e(k[G])=e(S/I_G)=e(S/(T_w))=\deg T_w$. Thus, by \rt{cedgesubring} we conclude that 
$$j(I(G))=m^c \cdot e(k[G]) = m^c \cdot  \deg T_w.$$ 
Thus the result follows as the degree of  $T_w$ is half the length of the monomial walk $w$.
\end{proof}

\begin{ex}\label{ex:bowtie}
Let $G$ be a bicyclic graph, consisting of two cycles of lengths $l_1$ and $l_2$ connected by a path (\rf{bicycle1}) or attached along a path  of length $l_3$ 
 (\rf{bicycle2}).   If  both $l_1$ and $l_2$ are odd, then the length of the unique nontrivial minimal monomial walk in $G$ is $l_1+l_2+2l_3$ for the first type of graphs, and it is $l_1+l_2-2l_3$ for the second type of graphs. Thus,
$$j(I(G))=l_1+l_2\pm2l_3.$$
If   $l_1$ is odd and $l_2$ is even, then $j(I(G))=l_2$, and if  both $l_1$ and $l_2$ are even, then $j(I(G))=0$ by \rp{positivity}.
\end{ex}

 \begin{figure}[h]
\includegraphics*[height=4.7cm]{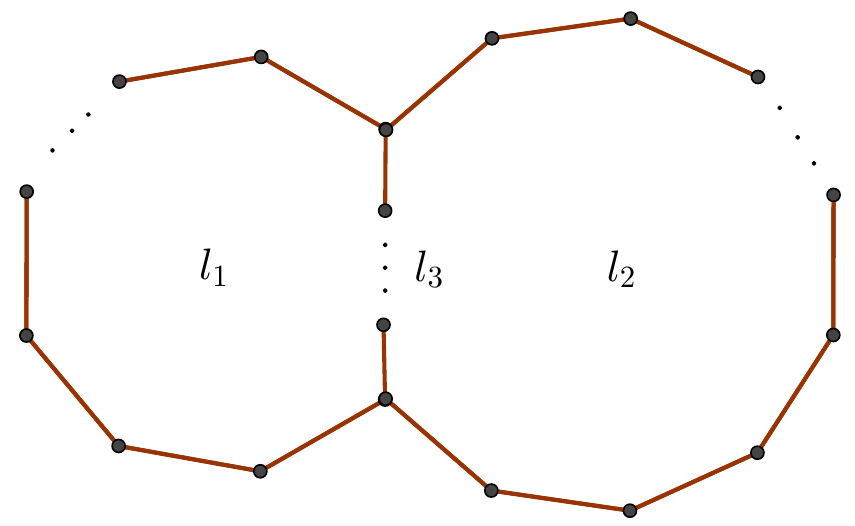}
\caption{A bicyclic graph of type 2.}
\label{F:bicycle2}
\end{figure}

%If $G$ is a simple graph, not necessarily connected, such that $e=n+1$ and each component has a cycle, then all connected components of $G$ are unicyclic, except one component that is bicyclic. Therefore, by  \rp{bicyclic}, \rp {muniformunicyclic} and \rp{product} we obtain the following result.

%\begin{cor}
%Let $G$ be a simple graph on $n$ nodes such that $e=n+1$. If $j(I(G))$ is not zero, equivalently, if each connected component of $G$ has an odd cycle, then $$j(I(G))=2^{c}l,$$  where $l$ is half the length of the  unique nontrivial minimal even closed walk in $G$. 
%\end{cor}

One may also  obtain the following result as an immediate corollary of \rp{bicyclic}, \rp {muniformunicyclic} and \rp{product}.

\begin{cor}\label{C:unibi}
Let $G$  be a simple graph in which the connected components are unicyclic or bicyclic. If $j(I(G))$ is not zero,  then $$j(I(G))=2^{c}l_1 \cdots l_{k},$$
where $c$ is the number of connected components of $G$ and  the $l_{i}$ are half the length of the unique nontrivial minimal monomial walks in the  bicyclic connected components of $G$.  
\end{cor}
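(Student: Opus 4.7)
The plan is to derive \rc{unibi} as a direct synthesis of the multiplicativity formula across connected components (\rp{product}) together with the closed-form expressions already established for individual unicyclic and bicyclic components (\rp{muniformunicyclic} and \rp{bicyclic}). Since $G$ is a simple graph we have $m=2$ throughout, which will make the exponents collapse nicely.

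First I would verify that the hypotheses of \rp{muniformunicyclic} and \rp{bicyclic} apply to each component. Every connected simple graph is properly connected as a $2$-uniform hypergraph: any two edges $u,v$ in a connected graph can be joined by a sequence of edges whose consecutive members share a vertex, which is exactly the $m-1=1$ intersection condition. Moreover, the assumption $j(I(G))\neq 0$ combined with \rp{product} forces $j(I(G_i))\neq 0$ for every connected component $G_i$ of $G$, so the positivity hypothesis in each of the two building blocks is met.

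Next I would split the $c$ components of $G$ into the $c_1$ unicyclic ones and the $c_2=k$ bicyclic ones. For a unicyclic component $G_i$ the number of edges equals the number of nodes, so \rp{muniformunicyclic} applied with $m=2$ and a single connected component gives $j(I(G_i))=2$. For a bicyclic component $G_i$ the number of edges is one more than the number of nodes, so \rp{bicyclic} applied with $m=2$ gives $j(I(G_i))=2\,l_i$, where $l_i$ is half the length of the unique nontrivial minimal monomial walk in $G_i$ (the uniqueness being supplied by \rp{bicyclic} itself). Substituting into the product formula of \rp{product} yields
$$j(I(G))=\prod_{i=1}^{c_1}2\;\cdot\;\prod_{i=1}^{k}2\,l_i = 2^{c_1+k}\,l_1\cdots l_k = 2^{c}\,l_1\cdots l_k,$$
which is the claimed identity.

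There is no substantive obstacle: the corollary is essentially a bookkeeping consequence of three results already proven. The only point that requires a line of justification is the observation that connected simple graphs are automatically properly connected in the sense of \rs{pecas}, so that the hypotheses of \rp{muniformunicyclic} and \rp{bicyclic} transfer componentwise without further work.
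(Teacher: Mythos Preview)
Your proof is correct and follows exactly the route the paper indicates: the corollary is stated as an immediate consequence of \rp{bicyclic}, \rp{muniformunicyclic}, and \rp{product}, and your argument spells out precisely this synthesis with $m=2$. The only extra remark you make---that connected simple graphs are automatically properly connected---is also noted explicitly in the paper, so nothing is missing.
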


\begin{rem}
Note that the toric edge ideal of the graphs as in the statement of the \rc{unibi} are complete intersections. Let $G$ be an arbirtrary $m$-uniform hypergraph with complete intersection toric edge ideal $I_G$, generated by a regular sequence of binomials $T_{w_1}, \ldots, T_{w_s}$. Then 
$$e(k[G])=e(S/(T_{w_1}, \ldots, T_{w_s}))= \deg T_{w_1} \cdots \deg T_{w_s}.$$
Therefore, if   $G$ has properly connected components  and the $j$-multiplicity of the edge ideal of $G$ is not zero, then by \rt{cedgesubring} we obtain
$$
j(I(G))=m^c \cdot e(k[G]) =m^c   \deg T_{w_1} \cdots \deg T_{w_s} = m^{c}l_1 \cdots l_{s},
$$
where $l_{i}$ is half the length of the monomial walk $w_i$ for $i=1, \ldots, s$. In particular, we recover  \rc{unibi} without using \rp{product} and the volumes. For a study of simple graphs with complete intersection toric edge ideals, see \cite{BGR, GRV, TT}.
\end{rem}

%%%%%%%%%%%%%%%%%%%%%%%%%%%%%%%%%%%%%%%%%
\section{Inequalities on the $j$-multiplicity of edge ideals}\label{S:inequalityedgeideal}

In this section, we explore the relations between the $j$-multiplicity of  the edge ideals of hypergraphs and  their subhypergraphs and we obtain general bounds for the $j$-multiplicity of edge ideals. Let $G$ and $H$ be hypergraphs. Then $H$ is called a {\it subhypergraph} of $G$ if  $V(H)$ and $E(H)$ are subsets of $V(G)$ and $E(G)$, respectively. 
%If $H$ and $K$ are subhypergraphs of $G$, then $H \cap K$ is a subhypergraph of $G$ whose set of nodes is $V(H) \cap V(K)$ and the set of  edges  is $E(H) \cap E(K)$.  
In \rt{inequality} below we prove a monotonicity property of the $j$-multiplicity, which will be useful in providing bounds for the $j$-multiplicity of edge ideals.
We start with the following geometric observation.

\begin{lem}\label{L:subpolytope} Let $A$ be any finite set of lattice points in $\R^n$ and 
$B\subset A$. Then the normalized volume of $\Conv(B)$ in the affine span of $B$ is no greater than the normalized volume of $\Conv(A)$ in the affine span of $A$.
\end{lem}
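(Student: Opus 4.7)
The plan is to induct on $|A \setminus B|$. If $A = B$ there is nothing to prove, so choose $v \in A \setminus B$ and set $B' = B \cup \{v\}$. It suffices to establish the single-step inequality
$$\Vol_{\dim \aff B}(\conv B) \le \Vol_{\dim \aff B'}(\conv B'),$$
since applying the induction hypothesis to $B' \subseteq A$ then completes the chain.

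I would split into two cases according to whether adding $v$ changes the dimension of the affine span. If $v \in \aff B$, then $\aff B' = \aff B$ and the two normalized volumes are computed with respect to the same sublattice $\aff B \cap \Z^n$; here $\conv B \subseteq \conv B'$ and monotonicity of Lebesgue measure in a fixed dimension gives the inequality. If $v \notin \aff B$, then $\dim \aff B' = \dim \aff B + 1$ and $\conv B' = \Pyr_v(\conv B)$, the pyramid with base $\conv B$ and apex $v$. The key input is the intrinsic version of the pyramid formula \re{pyrvolume-apex}, applied inside the affine subspace $\aff B'$ with its induced lattice $\aff B' \cap \Z^n$: it reads
$$\Vol_{\dim \aff B'}\bigl(\Pyr_v(\conv B)\bigr) = h \cdot \Vol_{\dim \aff B}(\conv B),$$
where $h$ is the lattice distance from $v$ to the hyperplane $\aff B \subset \aff B'$, measured with respect to $\aff B' \cap \Z^n$.

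The main point to verify is that $h \ge 1$. Since $\aff B$ is a rational affine subspace of the rational affine subspace $\aff B'$, and $v$ is a lattice point of $\aff B' \cap \Z^n$ not lying in $\aff B$, the lattice distance $h$ is a positive integer, so the single-step inequality follows and the induction closes. The only mildly delicate point is the pyramid formula in the intrinsic setting, but this reduces to \re{pyrvolume-apex} by choosing any lattice-preserving affine identification of $\aff B'$ with $\R^{\dim \aff B'}$ that sends $\aff B \cap \Z^n$ to a rational hyperplane, noting that the formula depends only on intrinsic lattice data.
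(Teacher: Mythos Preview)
Your proof is correct and follows essentially the same approach as the paper: induct on $|A\setminus B|$, reduce to the one-point step, split into the case where the affine span is unchanged (monotonicity of volume) and the case where the dimension jumps by one (pyramid formula with lattice height a positive integer). The only cosmetic difference is that the paper removes a point from $A$ while you add a point to $B$, and the paper fixes coordinates so that $\aff A=\R^n$ whereas you keep the argument intrinsic and invoke a lattice-preserving identification; both amount to the same verification.
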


\begin{proof}
By induction, it is enough to assume that $|A|-|B|=1$. Also, by choosing coordinates
we may assume that the affine span of $A$ is $\R^n$. Let $A\setminus B=\{a\}$.
If the affine span of $B$ is also $\R^n$ then, clearly
$$\Vol_n(\Conv(B))\leq \Vol_n(\Conv(A)).$$
Otherwise, the affine span of $B$ is an affine hyperplane $L\subset \R^n$ and $\Conv(A)$
is the pyramid over $\Conv(B)$ with apex $a$. Then
$$\Vol_{n-1}(\Conv(B))\leq \Vol_n(\Conv(A))$$
follows from \re{pyrvolume-apex} since the lattice distance from the affine span of $B$ to $a$
is a positive integer.
\end{proof}

\begin{thm}\label{T:inequality}
Let $G$ be an $m$-uniform hypergraph. If $j(I(G))$ is not zero and $H$ is a subhypergraph of $G$, then
$$j(I(H))\leq j(I(G)).$$
\end{thm}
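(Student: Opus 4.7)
The plan is to reduce the inequality to the geometric statement of \rl{subpolytope} via the volume formula of \rc{JonathanJack}. If $j(I(H))=0$ the inequality is automatic, so assume $j(I(H))\neq 0$. Note that $H$ is automatically $m$-uniform, since every edge of $H$ is an edge of $G$ and hence has size $m$; write $n=|V(G)|$ and $n'=|V(H)|$. The hypothesis $j(I(G))\neq 0$ combined with \rc{JonathanJack} forces $F(G)$ to be full-dimensional in its ambient hyperplane, hence of dimension $n-1$, and similarly $F(H)$ has dimension $n'-1$ inside $\R^{n'}$. Thus by \rc{JonathanJack} the desired inequality $j(I(H))\leq j(I(G))$ is equivalent to
\[\Vol_{n'-1}(F(H))\leq \Vol_{n-1}(F(G)).\]

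Next I would embed $\R^{n'}\hookrightarrow\R^n$ via the coordinate inclusion $\iota$ induced by $V(H)\subset V(G)$. Under $\iota$, the vertex $e'_{i_1}+\cdots+e'_{i_m}$ of $F(H)$ corresponding to an edge $\{x_{i_1},\ldots,x_{i_m}\}\in E(H)$ is sent to the vertex $e_{i_1}+\cdots+e_{i_m}$ of $F(G)$ corresponding to the same edge viewed in $E(G)$. Therefore, letting $A$ be the vertex set of $F(G)$ and $B$ the vertex set of $\iota(F(H))$, we have $B\subset A$ as finite sets of lattice points in $\R^n$. Applying \rl{subpolytope} then yields that the normalized volume of $\Conv(B)=\iota(F(H))$ in the affine span of $B$ is at most $\Vol_{n-1}(F(G))$.

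The main bookkeeping step, which I expect to be the chief technical point, is identifying this latter volume with $\Vol_{n'-1}(F(H))$ (the volume originally normalized inside $\R^{n'}$). This holds because $\iota$ is an affine lattice isomorphism from the hyperplane $\{z\in\R^{n'}\mid z_1+\cdots+z_{n'}=m\}$ onto its image $\aff(\iota(F(H)))$: the lattice points of the latter (with respect to $\Z^n$) are precisely the integer vectors supported on coordinates indexed by $V(H)$ whose entries sum to $m$, and these are exactly the $\iota$-images of lattice points of the former. Consequently the two normalized $(n'-1)$-volumes coincide, and multiplying the resulting inequality by $m$ yields $j(I(H))\leq j(I(G))$ via \rc{JonathanJack}.
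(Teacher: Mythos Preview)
Your argument is correct and follows essentially the same route as the paper: reduce to \rl{subpolytope} after embedding $V(H)$ into $V(G)$. The only difference is cosmetic. The paper adjoins the origin to both point sets, i.e.\ it takes $A$ to be the origin together with the edge vectors of $G$ and $B$ the origin together with the edge vectors of $H$, so that $\Conv(A)=\Pyr(F(G))$ and $\Conv(B)=\Pyr(F(H))$, and then invokes \rt{JonathanJack} rather than \rc{JonathanJack}. With the origin included, the affine span of $B$ is simply the coordinate subspace $\R^k$ (when $j(I(H))\neq 0$), whose intersection with $\Z^n$ is manifestly $\Z^k$; this makes the lattice identification you single out as the ``main bookkeeping step'' immediate and avoids tracking the factor of $m$.
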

\begin{proof}
Let $A\subset\R^n$ consist of  the origin and the lattice points corresponding to the edges of $G$.
Then $j(I(G))=\Vol_n(\Conv(A))$ by \rt{JonathanJack}. The set of nodes $V(H)$ defines
a coordinate subspace of $\R^n$ which we identify with $\R^k$, where $k=|V(H)|$.
Similarly, let $B\subset\R^k$ consist of the origin and the lattice points corresponding to the edges of $H$, and, hence, $j(I(H))=\Vol_k(\Conv(B))$.  If the affine span of $B$ equals $\R^k$ then 
$j(I(H))\leq j(I(G))$ by \rl{subpolytope}. Otherwise, $j(I(H))=0$ and the inequality obviously holds. 
%We use reverse induction on the number of nodes in $H$. For $\alpha  \in E(G)$ let $H+{\alpha}$ denote the subhypergraph of $G$ obtained by adding $\alpha$ and the corresponding nodes to $H$. If the nodes appearing in $\alpha$ belong to $V(H)$,  then $j(I(H)) \leq j(I(H+{\alpha}))$ by \rc{JonathanJack}, since adding an edge  to $H$  either does not change the Newton polytope $F(H)$ or adds  a vertex to $F(H)$, which may only increase its volume. Therefore, we may reduce to the case that there is no edge in $E(G) \setminus E(H)$ whose  set of nodes is a subset of $V(H)$. If all edges in $E(G) \setminus E(H)$  do not contain any nodes of $H$, then $H$ is a union of connected components of $G$. Thus $j(I(H))\leq j(I(G))$ by \rp{product}, since $ j(I(G))$ is a product of the $j$-multiplicities of the edge ideals of the  connected components of $G$, which are not zero by assumption. If $H$ is not a union of connected components of $G$, then there is at least one edge $\alpha \in E(G)$ which contains exactly $m-1$ nodes from $V(H)$ and one node $x$ from $V(G) \setminus V(H)$, as all connected components of $G$ are properly connected by assumption. Then $H$ is obtained from $H+{\alpha}$ by removing the free node $x$  and the corresponding edge $\alpha$. Therefore, by \rl{whisker} and the induction hypothesis  we conclude $j(I(H))=j(I(H+{\alpha})) \leq j(I(G))$. 
\end{proof}

\begin{rem} The above argument easily carries over to the case of arbitrary 
monomial ideals $I$ in $R=k[x_1,\dots, x_n]_{(x_1,\dots, x_n)}$ whose minimal monomial generators have 
exponents lying in a hyperplane (that is when $\dim F(I)<n$). 
Namely, if $\cB$ is a subset of the set of the minimal monomial generators of $I$
and $X\subset \{x_1,\dots, x_n\}$ is the set of variables appearing in $\cB$ then the ideal 
$J\subset k[X]_{(X)}$ generated by $\cB$ satisfies $j(J)\leq j(I)$.
Note that the condition $\dim F(I)<n$ is essential here as the following simple example shows.
If $I=\langle x^3,xy,y^3\rangle$ and $J=\langle x^3,y^3\rangle$ in $R=k[x,y]_{(x,y)}$ then $j(J)>j(I)$.
\end{rem}

\begin{cor}\label{C:completebound}
Let $G$ be an $m$-uniform hypergraph on $n$ nodes. Then $j(I(G))$ is bounded above by the $j$-multiplicity of the edge ideal  of the  complete $m$-uniform hypergraph on $n$ nodes mentioned in  \rex{CompleteUniform}. In particular if $G$ is a simple graph, then  $j(I(G))$ is at most $2^n -2n$.
\end{cor}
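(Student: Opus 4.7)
The approach is to apply the monotonicity property of \rt{inequality} with the complete $m$-uniform hypergraph $K$ on the node set $\{x_1,\dots,x_n\}$ playing the role of the ambient object. Since every edge of the $m$-uniform hypergraph $G$ is an $m$-element subset of $\{x_1,\dots,x_n\}$, we have $V(G)\subseteq V(K)$ and $E(G)\subseteq E(K)$, so $G$ is a subhypergraph of $K$. Provided $j(I(K))\neq 0$, \rt{inequality} (applied with $K$ in the role of the larger hypergraph and $G$ in the role of the subhypergraph) yields
$$j(I(G))\leq j(I(K)),$$
and $j(I(K))$ has already been computed in closed form in \rex{CompleteUniform} as $m\cdot A(n-1,m)$. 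Specializing to $m=2$ gives the second assertion: $G$ is a subgraph of the complete graph $K_n$ and $j(I(K_n))=2^n-2n$ by the same example.

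The only small point to address is the degenerate case $j(I(K))=0$, where \rt{inequality} does not directly apply. By \rex{CompleteUniform} this occurs precisely when $m=n$. In that case $K$ consists of a single edge $\{x_1,\dots,x_n\}$, so the only $m$-uniform hypergraph on $n$ nodes without isolated nodes is $K$ itself, and thus $j(I(G))=0=j(I(K))$ trivially. More slickly, if $j(I(G))=0$ then the desired inequality holds by non-negativity of the $j$-multiplicity, and otherwise \rt{inequality} can be applied in the form stated.

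There is no substantial obstacle: the corollary is a direct consequence of the monotonicity established in \rt{inequality} together with the evaluation of the $j$-multiplicity for the complete $m$-uniform hypergraph carried out in \rex{CompleteUniform}. The only minor bookkeeping is the $m=n$ boundary case handled above.
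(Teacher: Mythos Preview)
Your proof is correct and follows the same route as the paper, which states the corollary without proof as an immediate consequence of \rt{inequality} applied with the complete $m$-uniform hypergraph as the ambient object. Your explicit handling of the boundary case $m=n$ (where $j(I(K))=0$ and \rt{inequality} would not directly apply) is a small piece of bookkeeping that the paper leaves implicit.
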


Let $G$ be a simple graph with {\it odd tulgeity} $\tau_0$, which is the maximum number of node-disjoint odd cycles in $G$. Let $H$ be a subgraph of $G$ consisting of $\tau_0$ node-disjoint odd cycles in $G$.  Then by \rp{unicyclic} or \rp{muniformunicyclic}, the $j$-multiplicity of $I(H)$ is $2^{\tau_0}$.  Therefore, if $I(G)$ has nonzero $j$-multiplicity, then $j(I(G)) \geq 2^{\tau_0}$ by \rt{inequality}. On the other hand, if $G$ is a multipartite graph of type $(q_1, \ldots, q_k)$, then by  \rt{inequality} $j(I(G))$ is bounded above by the $j$-multiplicity of the complete multipartite graph of type $(q_1, \ldots, q_k)$ as in  \rex{multipartite}. Therefore, we obtain the following corollary.

\begin{cor} \label{C:bounds}
Let $G$ be a simple multipartite graph of type $(q_1, \ldots, q_k)$ with $n$ nodes and odd tulgeity $\tau_0$. If the $j$-multiplicity of $I(G)$ is not zero, then
$$
2^{\tau_0} \leq j(I(G)) \leq 2^{n} - 2\sum_{i=1}^k \sum_{j=1}^{q_i} {n-1 \choose j-1}.
$$
\end{cor}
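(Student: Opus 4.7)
The plan is to derive the two inequalities independently, in each case invoking the monotonicity statement \rt{inequality} together with a known evaluation of the $j$-multiplicity of an auxiliary graph.

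For the lower bound, by the very definition of odd tulgeity there exist $\tau_0$ pairwise node-disjoint odd cycles inside $G$. I would take $H$ to be the subgraph formed by their union. Then $H$ consists of exactly $\tau_0$ connected components, each a simple odd cycle, so the number of edges of $H$ equals the number of its nodes, and every component carries an odd cycle. I would then apply \rp{unicyclic} to read off $j(I(H))=2^{\tau_0}$. Since $j(I(G))\neq 0$ by hypothesis, \rt{inequality} applied to $H\subseteq G$ gives $j(I(G))\geq j(I(H))=2^{\tau_0}$.

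For the upper bound, the plan is to view $G$ as a subgraph of the complete multipartite graph $K=K_{q_1,\dots,q_k}$ on the same $k$-partition of the $n$ nodes. Before invoking \rt{inequality} I must verify $j(I(K))\neq 0$. From $j(I(G))\neq 0$ and \rp{positivity}, $G$ is non-bipartite, which forces $k\geq 3$; then $K$ is a connected graph containing a triangle, so \rp{positivity} applied to $K$ itself yields $j(I(K))\neq 0$. Hence \rt{inequality} applied to $G\subseteq K$ gives $j(I(G))\leq j(I(K))$, and the closed-form value of $j(I(K))$ supplied by \rex{multipartite} produces precisely the right-hand side in the statement.

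I do not expect a serious obstacle in this argument: it is a two-sided sandwich built from monotonicity together with two prior evaluations. The one subtle point that must not be overlooked is the non-vanishing hypothesis in \rt{inequality}, which has to be checked for the \emph{ambient} graph in each of the two applications. For the lower bound this is exactly the hypothesis of the corollary; for the upper bound it is secured by \rp{positivity} after noting that $G$ being non-bipartite forces $k\geq 3$ and hence a triangle in $K$.
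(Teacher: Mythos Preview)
Your proposal is correct and follows essentially the same approach as the paper: both bounds come from \rt{inequality} combined with the evaluations in \rp{unicyclic} and \rex{multipartite}, respectively. Your treatment is in fact slightly more careful than the paper's, since you explicitly verify the non-vanishing hypothesis $j(I(K))\neq 0$ needed to apply \rt{inequality} on the upper side (deducing $k\geq 3$ from \rp{positivity} and hence a triangle in $K$), a point the paper leaves implicit.
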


For a node $x$ in $G$, we let $G-x$ denote the subhypergraph of $G$ obtained by removing $x$ and the edges containing it from $G$. We say that $x$ is a {\it free node}  if it is contained in only one edge in $E(G)$. For simple graphs a free node is also known as a {\it whisker}. Recall that by \rt{inequality}, $j(I(G-x))\leq j(I(G))$ for any node $x$ in $G$. 
Below we note that equality holds for free nodes.

\begin{prop}\label{P:whisker}
Let $G$ be an $m$-uniform hypergraph containing a free node $x$. Then $$j(I(G))=j(I(G-x)).$$
\end{prop}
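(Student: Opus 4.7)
The plan is to derive the equality purely from the polyhedral geometry developed in Section \ref{S:edgeidealvolumes}, exploiting the fact that a free node contributes a single apex to the convex hull appearing in the volume formula for $j(I(G))$. Let $e=\{x,y_1,\dots,y_{m-1}\}$ be the unique edge containing $x$ and $v_e\in\{0,1\}^n$ its exponent vector. As in the proof of \rt{inequality}, if $A_G$ denotes the union of the origin with the exponent vectors of the edges of $G$, then $j(I(G))=\Vol_n(\Conv(A_G))$, and similarly $j(I(G-x))=\Vol_{n-1}(\Conv(A_{G-x}))$ inside $\R^{n-1}$ indexed by $V(G)\setminus\{x\}$.

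Next I would identify $\R^{n-1}$ with the coordinate hyperplane $H=\{z\in\R^n:z_x=0\}$. Every element of $A_G$ other than $v_e$ has $x$-coordinate equal to $0$, so $A_G\cap H$ coincides with $A_{G-x}$ under this identification, while $v_e$ sits above $H$ at $x$-coordinate~$1$. Consequently $\Conv(A_G)$ is the pyramid with base $\Conv(A_{G-x})\subset H$ and apex $v_e$. Applying \re{pyrvolume-apex} with the primitive normal $u=-e_x$ to $H$ (so that $\langle u,z\rangle=0$ on $H$ and $\langle u,v_e\rangle=-1$) yields the lattice height $1$ of $v_e$ above $H$ and
\begin{equation*}
j(I(G))=\Vol_n(\Conv(A_G))=1\cdot\Vol_{n-1}(\Conv(A_{G-x}))=j(I(G-x)).
\end{equation*}

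The one point requiring care is the degenerate case when $\Conv(A_{G-x})$ is not full-dimensional in $H$, in which $j(I(G-x))=0$. Then $\Conv(A_G)$ has affine dimension at most $\dim\Conv(A_{G-x})+1\leq n-1$, so $\Vol_n(\Conv(A_G))=0$ as well and both sides vanish. I do not expect any serious obstacle here: the essential content is the standard fact that attaching an apex at lattice height one to a codimension-one base preserves normalized volume, and this applies in our setting precisely because $v_e$ is a $0/1$-vector whose only entry outside $H$ equals~$1$.
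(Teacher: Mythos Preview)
Your argument is correct and is essentially the same as the paper's geometric proof: both observe that the free node $x$ contributes a single apex at lattice height~$1$ over the rest of the convex hull, so the normalized volume is preserved. The only cosmetic difference is that the paper applies this pyramid observation directly to the edge polytope $F(G)$ (and then invokes \rc{JonathanJack} to multiply by $m$), whereas you apply it to the full pyramid $\Conv(A_G)=\Pyr(F(G))$ including the origin; the degenerate case you single out is likewise handled in the paper by the convention that $\Vol_{n-1}$ of a lower-dimensional set vanishes.
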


\begin{proof} If $x_i \in V(G)=\{x_1, \ldots, x_n \}$ is a free node, then removing $x_i$ and the corresponding edge from $G$ is equivalent to removing
the unique vertex  of  the edge polytope $F(G)$ with $z_i$-coordinate being~$1$. Note that $F(G)$ is a pyramid with apex at this vertex and base $F(G-x_i)$. Since the base lies in the hyperplane $z_i=0$,  the height of the pyramid is one. Therefore the normalized $(n-1)$-volume of $F(G)$ equals the normalized $(n-2)$-volume of the base $F(G-x_i)$. Then  by \rc{JonathanJack} we obtain
$$j(I(G))=m \cdot \Vol_{n-1}(F(G))=m \cdot \Vol_{n-2}(F(G-x_i))=j(I(G-x_i)).$$
\end{proof}

One could also prove \rp{whisker} algebraically for simple graphs using  toric edge ideals as follows. 

\begin{proof}
By \rp{product} we may assume $G$ is connected. We may further assume $G$ contains an odd cycle, otherwise the statement is trivilally true as both $j(I(G))$ and $j(I(G-x))$ are zero.
 Let $\alpha$ be the only edge in $E(G)$ containing $x$.  Then $\alpha$ is not part of any nontrivial minimal monomial walk in $G$. Therefore, if we write $k[G]\simeq S/I_G$ as in \rs{toricedgeideal}, then $\alpha$ corresponds to a variable $T_{\alpha}$ in $S$ not appearing in the generators of the toric edge ideal $I_G$. If we let $\bar{S} = S/(T_{\alpha})$ and consider $\alpha$ as an element in $k[G]$, then we have the following homogenous isomorphisms of graded $k$-algebras,
$$
k[G]/(\alpha) \simeq  S/(I_G+(T_{\alpha})) \simeq \bar{S}/I_{G-x} \simeq k[G-x].
$$
Therefore, using the homogenous short exact sequence 
$$
0 \rightarrow k[G](-1) \xrightarrow{\alpha} k[G] \rightarrow k[G]/(\alpha)  \simeq k[G-x] \rightarrow 0
$$
we obtain $e(k[G])=e(k[G-x])$. Now since both $G$ and $G-x$ are connected and contain an odd cycle, by  \rt{edgesubring} we conclude 
$$j(I(G))=2e(k[G])=2e(k[G-x])=j(I(G-x)).$$
\end{proof}

The following result gives a lower bound for the $j$-multiplicity of the edge ideal of an $m$-uniform hypergraph  in terms of the multiplicity of the associated edge subring.
\begin{prop}\label{R:inequalityedgesubring}
Let $G$ be an  $m$-uniform hypergraph with $c$ connected components, not necessarily properly connected. If $j(I(G))$ is not zero, then
$$j(I(G)) \geq m^c \cdot e(k[G]).$$
\end{prop}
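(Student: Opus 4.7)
The plan is to follow the template of the proof of \rt{cedgesubring} but to replace the exact length computation by a lower bound, since without proper connectedness the full strength of \rl{ReesPivot} is not available. First, I would reduce to the connected case: by \rt{product}, $j(I(G)) = \prod_{k=1}^c j(I(G_k))$, where $G_1,\dots, G_c$ are the connected components of $G$, and because $k[G] \simeq k[G_1]\otimes_k \cdots \otimes_k k[G_c]$ as standard graded $k$-algebras, the main result of \cite{North} gives $e(k[G]) = \prod_{k=1}^c e(k[G_k])$. Since $j(I(G))\neq 0$ forces each $j(I(G_k))\neq 0$, it suffices to prove $j(I(G_k)) \geq m \cdot e(k[G_k])$ for every connected $G_k$.

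For the connected case, I would invoke the identity
$$j(I(G)) = \lambda(\cG_{\m\cG}) \cdot e(k[G]),$$
which was derived in the proof of \rt{edgesubring}. That derivation only needs that $k[G]$ is a domain (automatic, as $k[G]$ embeds in a polynomial ring) and that $\ell(I(G))=n$ (equivalent to $j(I(G))\neq 0$); proper connectedness is not used. Hence it remains to show $\lambda(\cG_{\m\cG}) \geq m$.

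For the length bound, the plan is to use the explicit descending chain
$$\cG_{\m\cG} \supseteq \m\cG_{\m\cG} \supseteq \m^2\cG_{\m\cG} \supseteq \cdots \supseteq \m^m\cG_{\m\cG}$$
inside $\cG_{\m\cG} \simeq \cR_{\m\cR}/I(G)\cR_{\m\cR}$. Since every generator of $I(G)$ has degree $m$ in $\m$, we have $I(G)\cR_{\m\cR} \subseteq \m^m\cR_{\m\cR} \subseteq \m^{k+1}\cR_{\m\cR}$ for each $0\leq k\leq m-1$, so the $k$-th step of the chain in $\cG_{\m\cG}$ is strict if and only if $\m^k\cR_{\m\cR} \supsetneq \m^{k+1}\cR_{\m\cR}$ in $\cR_{\m\cR}$. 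Because $\cR\subset R[t]$ is an integral domain and $\cR_{\m\cR}$ is a Noetherian local ring, Nakayama's lemma applied to the nonzero finitely generated ideal $\m^k\cR_{\m\cR}$ gives this strict inclusion. Therefore all $m$ inclusions in the chain are strict, and $\lambda(\cG_{\m\cG}) \geq m$.

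The main obstacle I anticipate is ensuring that strict inclusions in the chain of powers of $\m$ inside the domain $\cR_{\m\cR}$ survive when passing to the quotient by $I(G)\cR_{\m\cR}$; this is precisely where one needs the degree-$m$ containment $I(G) \subseteq \m^m$ to absorb $I(G)\cR_{\m\cR}$ into $\m^{k+1}\cR_{\m\cR}$ at every step $k\leq m-1$. When $G$ is properly connected, \rl{ReesPivot} sharpens the Nakayama step to equality, recovering $\lambda(\cG_{\m\cG})=m^c$ and hence the equality of \rt{cedgesubring} as the sharp case of this inequality.
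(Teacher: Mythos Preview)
Your proposal is correct and follows essentially the same route as the paper: reduce to the connected case via \rp{product} and Northcott's multiplicativity for $e(k[G])$, then use the identity $j(I(G))=\lambda(\cG_{\m\cG})\cdot e(k[G])$ from the proof of \rt{edgesubring} together with $I\cR_{\m\cR}\subseteq\m^m\cR_{\m\cR}$ to bound $\lambda(\cG_{\m\cG})\geq m$. The only cosmetic difference is that you invoke Nakayama in the domain $\cR_{\m\cR}$ to get strict inclusions, whereas the paper phrases it as ``$\m^k\cG_{\m\cG}\neq 0$ for $k<m$'' and sums the quotient lengths; these are the same argument.
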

\begin{proof}
If $G$ is a connected  $m$-uniform hypergraph, not necessarily properly connected,  then as in  the proof of  \rt{edgesubring} we have 
$$
j(I(G)) =  \lambda(\cG_{\m \cG}) \cdot e(k[G])
$$
when $j(I(G))$ is not zero. Note that $I \cR_{\, \m \cR} \subset  \m^m \cR_{\, \m \cR}$. Thus $\m^k \cG_{\m \cG}=  (\m^k +I) \cR_{\, \m \cR}/I \cR_{\, \m \cR}$  is not zero for   $k$ less than $m$. Hence,
$$
\lambda(\cG_{\m \cG} ) = \sum_{k\geq1}  \lambda(\m^{k-1} \cG_{\m \cG}/  \m^k \cG_{\m \cG}) \geq m.
$$
Therefore, $j(I(G))$ is greater than or equal to $m \cdot  e(k[G])$. 
If $G$ is not connected,  then the desired inequality follows from \rp{product} and the fact that the multiplicity of the edge subring is multiplicative over the connected components.

\end{proof}

Let $G$ be an $m$-uniform hypergraph with properly connected components. Assume the  toric edge ideal  $I_G$ is minimally generated  by binomials $T_{w_1}, \ldots, T_{w_s}$. For a description of the minimal generators of the toric edge ideals of simple graphs see \cite{RTT}.  Then as in \rs{toricedgeideal} we may represent the edge subring  $k[G]$ as $S/(T_{w_1}, \ldots, T_{w_s})$. Therefore, 
$$e(k[G])=e(S/(T_{w_1}, \ldots, T_{w_s}))\leq \deg T_{w_1} \cdots \deg T_{w_s}.$$
Hence, by  \rt{edgesubring} we obtain
$$
j(I(G))=m^c \cdot e(k[G]) \leq m^c   \deg T_{w_1} \cdots \deg T_{w_s}.
$$
Thus we have the following result.

\begin{prop} 
Let $G$ be an $m$-uniform hypergraph with properly connected components. Then
$$
j(I(G)) \leq  m^{c}l_1 \cdots l_{s},
$$
where the $l_{i}$ are half the length of the monomial walks in $G$ corresponding to a minimal generating set of $I_G$.
\end{prop}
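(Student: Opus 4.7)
The plan is to combine Theorem~\ref{T:cedgesubring} with a standard Bezout-type bound on the multiplicity of a graded quotient of a polynomial ring. The whole argument should fit in just a few lines once that bound is in hand.

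If $j(I(G))=0$, then since each $l_i$ is a positive integer and $m^c\geq 1$, the right-hand side is positive and the inequality holds trivially. So assume $j(I(G))\neq 0$. By Theorem~\ref{T:cedgesubring} we then have $j(I(G))=m^{c}e(k[G])$, so it suffices to prove
$$e(k[G])\;\leq\;l_1\cdots l_s.$$
Using the toric presentation $k[G]\simeq S/I_G$ from Section~\ref{S:toricedgeideal}, with minimal generators $T_{w_1},\dots,T_{w_s}$ of degrees $\deg T_{w_i}=l_i$ (half the length of $w_i$), the required estimate becomes $e(S/I_G)\le \prod_i\deg T_{w_i}$. I would establish this by selecting a homogeneous regular sequence $g_1,\dots,g_{c'}$ inside $I_G$, where $c'={\rm ht}\,I_G$, consisting of polynomial combinations of the $T_{w_i}$ whose degrees satisfy $\prod_j\deg g_j\le\prod_i l_i$. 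Since $I_G$ is prime of height $c'$, it is a minimal prime of the complete intersection $J=(g_1,\dots,g_{c'})$ of the same Krull dimension as $S/J$. The associativity formula for Hilbert--Samuel multiplicity,
$$e(S/J)\;=\;\sum_{P}\lambda\bigl((S/J)_{P}\bigr)\,e(S/P),$$
summed over the top-dimensional minimal primes $P$ of $J$, together with non-negativity of each term, then gives
$$e(S/I_G)\;\le\;e(S/J)\;=\;\prod_{j=1}^{c'}\deg g_j\;\le\;\prod_{i=1}^{s}l_i.$$

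The only delicate point is the extraction of the regular sequence $g_1,\dots,g_{c'}$ with the right degree profile inside $I_G$; this is the Bezout step. I would handle it either by invoking the refined Bezout theorem (in the form due to Heintz or Vogel), or directly: order the generators by decreasing degree and adjoin them (or generic linear combinations of equal-degree generators) one at a time, at each stage avoiding the finitely many minimal primes of the partial regular sequence constructed thus far; because the $l_i$ are positive integers, dropping the excess factors only weakens the bound. Everything else in the argument is routine once Theorem~\ref{T:cedgesubring} is invoked.
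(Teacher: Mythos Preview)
Your proposal is correct and follows essentially the same approach as the paper: reduce to the bound $e(k[G])\leq l_1\cdots l_s$ via \rt{cedgesubring}, then invoke a Bezout-type inequality for the multiplicity of $S/I_G$ in terms of the degrees of its generators. The paper simply asserts the inequality $e(S/(T_{w_1},\ldots,T_{w_s}))\leq \deg T_{w_1}\cdots\deg T_{w_s}$ as a known fact, whereas you supply a sketch of the regular-sequence/associativity justification and are slightly more careful in separating out the trivial case $j(I(G))=0$.
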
 

Let $G$ be a simple connected graph on $n$ nodes and $e$ edges, such that  the edge subring $k[G]$ is Cohen-Macaulay.  See for instance \cite{BHO} for a study of graphs with Cohen-Macaulay edge subring. Then Lemma 4.1 in \cite{Hibi2} states that  $\Vol_{n-1}(F(G))$  is at least  $e-n+1$ when $G$ is not bipartite. Therefore, by  \rc{JonathanJack} we obtain the following lower bound for the $j$-multiplicity of the edge ideal of $G$.

\begin{prop} 
Let $G$ be a simple connected graph on $n$ nodes and $e$ edges whose edge subring  is Cohen-Macaulay. If $ j(I(G))$ is not zero, then $$j(I(G)) \geq 2(e-n+1).$$
\end{prop}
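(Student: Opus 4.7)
The proof should be quite direct, essentially just chaining together two results already quoted in the paragraph preceding the proposition. Here is how I would organize it.

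First I would note that because $j(I(G))$ is nonzero and $G$ is a simple connected graph, \rp{positivity} forces the (unique) connected component of $G$ to contain an odd cycle, i.e. $G$ is non-bipartite. This is the hypothesis needed to invoke \cite[Lemma 4.1]{Hibi2}, which is quoted in the paragraph above the proposition and states that for a simple non-bipartite connected graph $G$ on $n$ nodes and $e$ edges whose edge subring $k[G]$ is Cohen-Macaulay, one has
\[
\Vol_{n-1}(F(G)) \,\geq\, e-n+1.
\]

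Next I would apply \rc{JonathanJack} with $m=2$. Since $G$ is a simple ($2$-uniform) graph, that corollary gives
\[
j(I(G)) \,=\, 2\cdot \Vol_{n-1}(F(G)).
\]
Combining the two displayed inequalities immediately yields $j(I(G)) \geq 2(e-n+1)$, which is the desired bound.

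There is essentially no obstacle: the proof reduces to verifying that the hypotheses of Hibi's lemma are satisfied (connectedness is given; non-bipartiteness follows from the nonvanishing of $j(I(G))$ via \rp{positivity}; Cohen--Macaulayness of $k[G]$ is assumed), and then substituting into the volume formula of \rc{JonathanJack}. The only thing to be careful about is the reduction from the hypothesis "$j(I(G))\ne 0$" to "$G$ contains an odd cycle," so as to legitimately apply Hibi's bound, which requires non-bipartiteness.
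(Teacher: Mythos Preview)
Your proposal is correct and follows exactly the argument the paper gives in the paragraph immediately preceding the proposition: reduce nonvanishing of $j(I(G))$ to non-bipartiteness via \rp{positivity}, apply \cite[Lemma 4.1]{Hibi2} to bound $\Vol_{n-1}(F(G))$ below by $e-n+1$, and then multiply by $2$ using \rc{JonathanJack}. There is nothing to add.
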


%%%%%%%%%%%%%%%%%%%%%%%%%%%%%%%%%%%%%%%%%%%
\section{The $\eps$-multiplicity of edge ideals}\label{S:epsilonedgeideal}

We recall the notion of the $\eps$-multiplicity as introduced in \cite{KV} and \cite{UV}. Let $I$ be an arbitrary ideal in a Noetherian local ring $R$ with maximal ideal $\m$ and dimension $n$. Then the $\eps$-multiplicity of $I$ is defined as
$$ 
\eps(I)= n! \limsup_{k} \,  \frac{\lambda _R(\Gamma_{\m}(R/I^{k}))}{k^{n}} \in \R_{\geq 0}.
$$
Similar to the $j$-multiplicity, the $\eps$-multiplicity can be viewed as an extension of the Hilbert-Samuel multiplicity  to arbitrary ideals, for if $I$ is $\m$-primary, then $\Gamma_{\m}(R/I^{k})= R/I^{k}$, therefore $\eps(I) =  { e}(I)$.  However, the $\eps$-multiplicity exhibits a very different behavior than the $j$-multiplicity. For instance, the $j$-multiplicity is always a non-negative integer, while the $\eps$-multiplicity could be an irrational real number \cite{CHST}. In this section, we will compute the $\eps$-multiplicity of the edge ideal of cycles and complete hypergraphs, which further highlights the differences of the two invariants.
The vanishing of the $\eps$-multiplicity of an ideal is captured by the analytic spread of the ideal. Indeed, as in the case of $j$-multiplicity, the $\eps$-multiplicity of $I$ is not zero if and only if the analytic spread of $I$ is maximal  \cite{KV, UV}.  In particular,  by \rp{HyperSpread} we obtain the following result.

\begin{prop}\label{P:epsilon-vanishing}
If $G$ is an $m$-uniform hypergraph  with properly connected components, then  $\eps(I(G)) \not = 0$  if and only if  the nodes in each connected component of $G$ are pivot equivalent. 
Recall that for simple graphs, this condition means that each connected component contains an odd cycle.

\end{prop}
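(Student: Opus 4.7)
The plan is to derive the statement almost immediately from two previously established facts, mirroring the proof of \rp{uniformpositivity} for the $j$-multiplicity.

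First, I would invoke the general vanishing criterion for the $\eps$-multiplicity recalled in the paragraph preceding the proposition and cited to \cite{KV, UV}: for an ideal $I$ in a Noetherian local ring $(R,\m)$ of dimension $n$, one has $\eps(I)\neq 0$ if and only if $\ell(I)=n$, i.e.\ the analytic spread of $I$ is maximal. Since the ambient ring here is $R=k[x_1,\dots,x_n]_{(x_1,\dots,x_n)}$, which has dimension $n$, this reduces the problem to deciding when $\ell(I(G))=n$.

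Next, I would apply \rp{HyperSpread}, which under the hypothesis that the connected components of $G$ are properly connected gives the explicit formula $\ell(I(G))=n-p+c$, where $p$ is the number of pivot equivalence classes and $c$ is the number of connected components. Combining the two facts, $\eps(I(G))\neq 0$ if and only if $n-p+c=n$, i.e.\ $p=c$.

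Finally, I would interpret the equality $p=c$ combinatorially. By definition, the pivot equivalence relation is generated by a symmetric relation $\approx$ defined through edges of $G$; in particular, two pivot equivalent nodes lie in the same connected component, so each pivot class is contained in a single component. This gives $p\geq c$ always, with equality precisely when every connected component of $G$ consists of a single pivot class, that is, when the nodes in each connected component of $G$ are pivot equivalent. The final remark about simple graphs then follows from the observation already made in \rs{pecas} that a simple connected graph has a single pivot equivalence class if and only if it contains an odd cycle. There is no real obstacle here; the statement is essentially a formal consequence of the general analytic-spread criterion for $\eps$ combined with \rp{HyperSpread}.
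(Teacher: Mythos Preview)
Your proposal is correct and follows exactly the paper's approach: the paper does not give a separate proof but simply states that the proposition follows from the general analytic-spread criterion for $\eps$ (cited to \cite{KV, UV}) together with \rp{HyperSpread}, which is precisely the argument you spell out. Your added observation that $p\geq c$ always, with equality iff each component is a single pivot class, makes explicit the step the paper leaves implicit.
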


Let $I$ be a monomial ideal in  $R=k[x_1,\dots, x_n]_{(x_1,\dots, x_n)}$.  
Let $L_i\subset\R^n$ be the coordinate hyperplane defined by $z_i=0$ and $\pi_i:\R^n\to L_i$
the corresponding orthogonal projection. For the Newton polyhedron $P(I)$, define the following 
\begin{equation}\label{e:hatF}
\hat P(I)=\bigcap_{i=1}^n\pi_i^{-1}\left(\pi_i(P(I))\right),\quad \hat F(I)=\cl(\hat P(I)\setminus P(I)),
\end{equation}
where $\cl(K)$ denotes the closure of $K$ in $\R^n$.
The following theorem by Jeffries and Monta{\~n}o \cite[Theorem 5.1]{JM} gives an interpretation of the $\eps$-multiplicity of monomial ideals in terms of the volumes of the associated polytopes.

\begin{thm}\label{T:eps-volume} Let $I\subset R$ be a monomial ideal. Then $\eps(I)=\Vol_n(\hat F(I))$.
\end{thm}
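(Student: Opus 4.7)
The plan is to translate the definition of $\eps(I)$ into a lattice point count via the monomial structure, and then recognize this count as an Ehrhart-type asymptotic associated with the polyhedral region $\hat F(I)$.

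First, I would give a combinatorial description of the zeroth local cohomology. Since $I$ is monomial, so is $I^k$, and the saturation decomposes as $(I^k:\m^\infty)=\bigcap_{i=1}^n (I^k:x_i^\infty)$. Therefore a monomial $x^\alpha$ represents a nonzero element of $\Gamma_\m(R/I^k)=(I^k:\m^\infty)/I^k$ if and only if $x^\alpha\notin I^k$ but for each $i$ some $x_i^N x^\alpha\in I^k$. Translating into exponent vectors via $x^\beta\in I^k \iff \beta\in kP(I)\cap\Z^n_{\geq 0}$, the condition on the saturation becomes $\pi_i(\alpha)\in \pi_i(kP(I))=k\pi_i(P(I))$ for each $i$. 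Since projections commute with scaling, $\bigcap_i \pi_i^{-1}(k\pi_i(P(I)))=k\hat P(I)$, so we obtain
$$\lambda_R(\Gamma_\m(R/I^k))=\#\bigl(\Z^n_{\geq 0}\cap(k\hat P(I)\setminus kP(I))\bigr).$$

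Second, I would verify that $\hat F(I)=\cl(\hat P(I)\setminus P(I))$ is a bounded polyhedral region, i.e.\ a finite union of rational polytopes. Both $\hat P(I)$ and $P(I)$ are intersections of finitely many closed half-spaces, so set-theoretic difference and closure yield such a union. For boundedness, I would use that $P(I)$ contains a translate of the positive orthant and has recession cone $\R^n_{\geq 0}$; combined with the preimage description of $\hat P(I)$, one checks that if $\alpha\in\hat P(I)$ and any single coordinate $\alpha_i$ is sufficiently large, then already $\alpha\in P(I)$, so $\hat P(I)\setminus P(I)$ is contained in a box near the origin. Using $k\hat F(I)=k\cdot\hat F(I)$, standard Ehrhart asymptotics for unions of rational polytopes give
$$\#\bigl(\Z^n\cap k\hat F(I)\bigr)=\vol_n(\hat F(I))\,k^n+O(k^{n-1}),$$
with lattice points on the lower-dimensional boundary strata of $\hat F(I)$ (including the difference between the half-open region $\hat P(I)\setminus P(I)$ and its closure) contributing only to the $O(k^{n-1})$ error. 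This ensures the $\limsup$ in the definition of $\eps(I)$ is actually a limit and equals $n!\vol_n(\hat F(I))=\Vol_n(\hat F(I))$.

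Third, I would dispose of the degenerate case in which $\hat F(I)$ has empty interior. By the result of Bivi\`a-Ausina recalled in \rr{Bivia}, $P(I)$ has a compact facet precisely when $\ell(I)=n$; when $\ell(I)<n$ one has $\eps(I)=0$ by the analytic-spread vanishing criterion for the $\eps$-multiplicity, and in the same regime the structural analysis in Step 2 forces $\hat P(I)\setminus P(I)$ to lie in a hyperplane, so $\vol_n(\hat F(I))=0$, matching both sides of the formula.

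The main obstacle I expect is the careful justification of Step 2. Unlike a single lattice polytope, $\hat F(I)$ is non-convex and its explicit facet structure depends on delicate interactions between $P(I)$ and the coordinate projections $\pi_i(P(I))$; one must subdivide $\hat F(I)$ into (possibly non-disjoint) closed rational polytopes, apply Ehrhart reciprocity or inclusion-exclusion to control the lattice count, and show that closure versus half-open versions differ only in lower-order boundary contributions. The boundedness argument is the other delicate point, since $I$ need not be $\m$-primary and the preimages $\pi_i^{-1}(\pi_i(P(I)))$ are themselves unbounded; the proof rests on simultaneously using all $n$ projection constraints together with the recession-cone structure of $P(I)$.
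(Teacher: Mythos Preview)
The paper does not prove this statement; it is quoted verbatim as \cite[Theorem~5.1]{JM} and used as a black box. So there is no ``paper's own proof'' to compare against, and your outline should be judged on its own merits as a reconstruction of the Jeffries--Monta\~no argument.

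Your overall architecture---translate $\lambda(\Gamma_\m(R/I^k))$ into a lattice-point count inside a dilated region, then invoke Ehrhart asymptotics---is the right one. However, Step~1 contains a genuine error that propagates through the rest of the argument. You assert
\[
x^\beta\in I^k \iff \beta\in kP(I)\cap\Z^n_{\geq 0},
\]
but this biconditional is false for general monomial ideals: the lattice points of $kP(I)$ are the exponents of monomials in the \emph{integral closure} $\overline{I^k}$, not in $I^k$ itself. (Already for $I=(x^2,y^3)$ in two variables one has $(1,2)\in P(I)$ while $xy^2\notin I$.) Consequently your description of $(I^k:x_i^\infty)$ via $\pi_i(kP(I))$ is also off: what you have actually computed is $\lambda\big(\Gamma_\m(R/\overline{I^k})\big)$, not $\lambda\big(\Gamma_\m(R/I^k)\big)$.

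This is fixable, but it is real work, not a boundary-term afterthought. One route is to prove that $\eps(I)=\eps(\overline{I})$ (or more precisely that replacing $I^k$ by $\overline{I^k}$ changes the length by $O(k^{n-1})$); this uses that $\overline{I^k}/I^k$ is a finitely generated module over the Rees algebra and has dimension at most $n$, so its graded pieces grow like $O(k^{n-1})$. Another route is to carry the actual staircase set $E_k=\{\beta:x^\beta\in I^k\}$ through the argument and compare $E_k$ with $kP(I)\cap\Z^n$ directly. Either way, you should flag this as the main technical obstacle rather than the Ehrhart bookkeeping in Step~2, which is comparatively routine once boundedness of $\hat F(I)$ is established.
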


Note that since $\hat P(I)\setminus P(I)$ is bounded, $P(I)$ and $\hat P(I)$ coincide outside of
a large enough ball. Therefore, $P(I)$ and $\hat P(I)$ have the same facet inequalities for
their unbounded facets. In particular, since $P(I)=F(I)+\R^n_{\geq 0}$, the inequalities $z_i\geq 0$
for $i=1,\ldots, n$ are among the facet inequalities for both $P(I)$ and $\hat P(I)$. 

\begin{prop}\label{P:epsiloncomplete} 
Let $G_{m,n}$ be the complete $m$-uniform hypergraph on $n$ nodes. Then
$$
\eps(I(G_{m,n}))=\frac{n-m}{n-1}A(n-1,m).
$$
In particular, for the complete simple graph $G_{2,n}$ and for the complete $(n-1)$-uniform hypergraph $G_{n-1,n}$ we obtain
$$\eps(I(G_{2,n}))=\frac{n-2}{n-1}(2^{n-1}-n),\quad\quad \eps(I(G_{n-1,n}))=\frac{1}{n-1}.$$
\end{prop}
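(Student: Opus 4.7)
The plan is to apply \rt{eps-volume}, which reduces the problem to computing the normalized volume of the co-convex body $\hat F(I(G_{m,n}))$. First I would read off the facet structure of $P(I(G_{m,n})) = \Delta_{m,n} + \R^n_{\geq 0}$, where the edge polytope $\Delta_{m,n}$ is the hypersimplex in $\{\sum z_j = m\}$. An analysis of which faces of $\Delta_{m,n}$ have inner normals in $\R^n_{\geq 0}$ produces the inequalities $z_i \geq 0$ for each $i$, together with $\sum_{j \in S} z_j \geq |S| - (n-m)$ for each $S \subseteq [n]$ with $|S| > n - m$; the unique compact facet corresponds to $S = [n]$ and is $\sum z_j \geq m$.

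Next I would compute the coordinate projections $\pi_i(P(I))$: since any lower bound on $z_i$ can be met by choosing $z_i$ large enough, $\pi_i(P(I))$ is cut out exactly by those facet inequalities of $P(I)$ that do not mention $z_i$. Intersecting the resulting cylindrical preimages shows that $\hat P(I(G_{m,n}))$ inherits every facet of $P(I(G_{m,n}))$ except the compact one $\sum z_j \geq m$, and hence
\[
\hat F(I(G_{m,n})) \;=\; \hat P(I(G_{m,n})) \cap \bigl\{\textstyle\sum_j z_j \leq m\bigr\},
\]
a bounded polytope.

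The heart of the proof is to identify $\hat F(I(G_{m,n}))$ with the pyramid $\Pyr_{z^*}(\Delta_{m,n})$ over the hypersimplex with apex the symmetric point $z^* = \bigl(\tfrac{m-1}{n-1}, \dots, \tfrac{m-1}{n-1}\bigr)$. Three preliminary observations are immediate: (a) $z^*$ makes each of the $n$ ``outer'' inequalities $\sum_{j \neq i} z_j \geq m - 1$ tight; (b) summing these over $i$ gives $(n-1)\sum_j z_j \geq n(m-1)$, so the minimum of $\sum_j z_j$ on $\hat F$ equals $n(m-1)/(n-1)$ and is attained only at $z^*$; (c) the top slice $\hat F \cap \{\sum z_j = m\}$ equals $\Delta_{m,n}$. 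The main obstacle is upgrading these to the equality $\hat F = \conv(\Delta_{m,n} \cup \{z^*\})$ — one must rule out additional vertices by a careful combinatorial vertex analysis, invoking the intermediate facet inequalities with $n - m < |S| < n - 1$ to argue that every point of $\hat F$ lies on a segment from $z^*$ to a point of $\Delta_{m,n}$.

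Once the pyramid structure is established, the result follows directly from \re{pyrvolume-apex}. With $u = (1,\ldots,1)$ the primitive integer normal to the affine span of $\Delta_{m,n}$ and $h(\Delta_{m,n}) = m$ its lattice distance from the origin, the lattice distance from $\Delta_{m,n}$ to the apex is $m - \langle u, z^* \rangle = m - \tfrac{n(m-1)}{n-1} = \tfrac{n-m}{n-1}$. Multiplying by the normalized Eulerian volume $\Vol_{n-1}(\Delta_{m,n}) = A(n-1,m)$ yields the desired formula, and the two specializations follow by substituting $A(n-1,2) = 2^{n-1}-n$ (recorded in \rex{CompleteUniform}) and $A(n-1,n-1) = 1$.
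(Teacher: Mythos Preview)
Your approach mirrors the paper's: describe the facets of $P$ and $\hat P$, identify $\hat F(I(G_{m,n}))$ with the pyramid $\Pyr_{z^*}(\Delta_{m,n})$, and then apply \re{pyrvolume-apex}. You are more explicit than the paper about the full list of facet inequalities $\sum_{j\in S}z_j\geq |S|-(n-m)$ for $n-m<|S|\leq n-1$, but the decisive geometric step is the same pyramid claim.

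Unfortunately the pyramid identification --- which you rightly flag as ``the main obstacle'' --- is false for $n\geq 4$, so the vertex analysis you propose cannot be carried out. Already for $n=4$, $m=2$ (where there are no intermediate facets to invoke) the point $v=(\tfrac12,\tfrac12,\tfrac12,0)$ lies in $\hat F$: one checks $z_j\geq 0$, $\sum_{j\neq i}z_j\geq 1$ for each $i$, and $\sum_j z_j=\tfrac32\leq 2$. But $v\notin\Pyr_{z^*}(\Delta_{2,4})$, since the unique point on the ray from $z^*=(\tfrac13,\tfrac13,\tfrac13,\tfrac13)$ through $v$ with coordinate sum $2$ is $(1,1,1,-1)\notin\Delta_{2,4}$. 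In fact $v$ and its three coordinate permutations are genuine extra vertices of $\hat F$ (each lies on $z_j=0$ together with three of the hyperplanes $\sum_{k\neq i}z_k=1$), and decomposing $\hat F$ into pyramids with apex $z^*$ over its facets gives
\[
\Vol_4(\hat F)=\Vol_4\bigl(\Pyr_{z^*}(\Delta_{2,4})\bigr)+4\cdot\tfrac{1}{6}=\tfrac{8}{3}+\tfrac{2}{3}=\tfrac{10}{3},
\]
strictly larger than the pyramid volume $\tfrac{8}{3}$ that the formula predicts. Thus your proposed ``careful combinatorial vertex analysis'' cannot establish $\hat F=\Pyr_{z^*}(\Delta_{m,n})$, because the equality itself fails. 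The paper's own proof makes the identical pyramid assertion and is subject to the same objection; the claim happens to be correct only when $n=3$.
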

\begin{proof} Denote $I_{m,n}=I(G_{m,n})$. Clearly, when $m=n$ we have $I_{n,n}=(x_1\cdots x_n)$ and $\eps(I_{n,n})=0$ which agrees with the formula in the statement. Thus we may assume that $m>n$. 
Let $P=P(I_{m,n})$ be the Newton polyhedron of $I_{m,n}$ and $F=F(I_{m,n})$ its compact facet. 
Recall from \rex{CompleteUniform} that $F$ is given by
$\sum_{j=1}^nz_j= m$. For every $i=1, \ldots, n$ the projection $\pi_i(P)$ equals $P(I_{m-1,n-1})$
embedded in the coordinate hyperplane $z_i=0$. This implies that $\pi_i^{-1}\pi_i(P)$ 
has a facet given by $\langle u_i,z\rangle\geq m-1$, where $u_i=-e_i+\sum_{j=1}^ne_j$. Therefore, 
$\hat P(I_{m,n})$ is given by the facet inequalities $\langle u_i,z\rangle\geq m-1$ and $z_i\geq 0$ for all $i=1, \ldots, n$. Since these facets
are unbounded, they are also the unbounded facets of $P$. This shows that 
$\hat F(I_{m,n})$ is a pyramid over $F$ with apex $a=\big(\frac{m-1}{n-1},\dots, \frac{m-1}{n-1}\big)$.
Consequently, by \rex{CompleteUniform} and equation \re{pyrvolume-apex} we obtain
\begin{equation}\label{e:CompleteUniform}
\eps(I_{m,n})=\Vol_n(\hat F(I_{m,n}))=\Big(m-\frac{n(m-1)}{n-1}\Big)\Vol_{n-1}(F)=\frac{n-m}{n-1}A(n-1,m).
\end{equation}
\end{proof}

\begin{prop}\label{P:epsiloncycle} 
Let $G$ be a cycle of length $n$. If $n$ is even, then $\eps(I(G))=0$.  If $n$ is odd, then
$$\eps(I(G))=\frac{2}{n+1}.$$
\end{prop}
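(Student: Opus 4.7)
For $n$ even, $C_n$ is bipartite and contains no odd cycle, so by \rp{epsilon-vanishing} we have $\eps(I(C_n)) = 0$. For $n$ odd, I plan to apply \rt{eps-volume} and compute $\Vol_n(\hat F(I(C_n)))$ by identifying $\hat F$ with the pyramid $\Pyr_a(F)$ over the edge polytope $F = F(C_n)$ whose apex is $a = \tfrac{2}{n+1}(1,\dots,1)$. To describe the facets of $\hat P(I(C_n))$, each projection $\pi_i(P(I(C_n)))$ is the Newton polyhedron of $I(C_n)|_{x_i=1} = (x_{i-1}, x_{i+1}) + (\text{edge ideal of the path on }\{i+2,\dots,i-2\})$; its slanted facets inside $L_i$ correspond bijectively to the minimal vertex covers of $C_n$ that do not contain $i$, so the union over $i$ produces one slanted facet $\sum_{j\in S}z_j \geq 1$ of $\hat P$ per minimal vertex cover $S$ of $C_n$. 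The inclusion $\Pyr_a(F)\subseteq\hat F$ is then easy: every vertex cover of $C_n$ has size at least $(n+1)/2$, so $\sum_{j\in S}a_j \geq 1$, and $\sum_{j\in S}v_j \geq 1$ for every $v \in F$ since $S$ covers every edge of $C_n$.

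For the reverse inclusion $\hat F\subseteq\Pyr_a(F)$, given $z\in\hat F$ with $\sum_i z_i = t\leq 2$ I set $s = (2-t)(n+1)/2 \in [0,1]$ and $v = (z - sa)/(1-s)$, so that $z = sa + (1-s)v$ and $\sum_i v_i = 2$. The minimum-size vertex cover inequalities $\sum_{j\in T}v_j \geq 1$ (for $|T| = (n+1)/2$) then follow from those on $z$ using $\sum_{j\in T}a_j = 1$, and these $n$ inequalities together with $\sum_i v_i = 2$ cut out $F$ as an $(n-1)$-simplex. So the remaining point is $v \geq 0$, equivalently the key inequality $z_j \geq 2 - \sum_i z_i$ for every $j$. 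This is where the oddness of $n$ enters combinatorially: for each vertex $j$ of $C_n$ there is a pair of minimum vertex covers $S, S'$ of $C_n$ with $S \cap S' = \{j\}$ and $S \cup S' = \{1,\dots,n\}$; explicitly, for $j = 1$ one may take $S = \{1,3,5,\dots,n\}$ and $S' = \{1,2,4,\dots,n-1\}$, and rotate for general $j$. Summing the two slanted facet inequalities $\sum_{i\in S}z_i \geq 1$ and $\sum_{i\in S'}z_i \geq 1$ yields $2z_j + \sum_{i\neq j}z_i \geq 2$, which is the required inequality.

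Once $\hat F = \Pyr_a(F)$ is established, equation \re{pyrvolume-apex} gives $\Vol_n(\hat F) = (2 - \langle(1,\dots,1), a\rangle)\,\Vol_{n-1}(F(C_n)) = \tfrac{2}{n+1}\cdot 1 = \tfrac{2}{n+1}$, using \rp{unicyclic} to conclude $j(I(C_n)) = 2$ (since $C_n$ is a unicyclic graph with an odd cycle) and \rc{JonathanJack} to conclude $\Vol_{n-1}(F(C_n)) = 1$. The main obstacle is the reverse pyramid inclusion, which reduces to the combinatorial fact about complementary pairs of minimum vertex covers in odd cycles; the parity of $n$ is essential for producing such pairs, which aligns with the vanishing of $\eps(I(C_n))$ in the even case.
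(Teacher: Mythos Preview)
Your plan is correct and follows essentially the same route as the paper: show $\hat F(I(C_n))$ is the pyramid $\Pyr_a(F)$ over the edge polytope with apex $a=\frac{2}{n+1}(1,\dots,1)=\frac{1}{k+1}(1,\dots,1)$, then apply \re{pyrvolume-apex} together with $\Vol_{n-1}(F)=1$. The paper packages the slanted facets of $\hat P$ as the rows of a circulant matrix $C_u$ with $u=(1,1,0,1,0,\dots,1,0)$ (\rl{facets}); your description via minimal vertex covers of $C_n$ is the same set of $n$ inequalities, since for odd $n$ the minimal vertex covers are exactly the cyclic shifts of $\{1,2,4,6,\dots,2k\}$.

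The one substantive difference is how you pin down the apex. The paper shows $\rk C_u=n$ via the associated polynomial of the circulant (\rl{rank}), so $C_uz=\mathbf{1}$ has the unique solution $a$, and hence $\hat F$ is a pyramid (\rp{pyramid}). You instead write $z=sa+(1-s)v$ directly and verify $v\in F$ using the pair of complementary minimum vertex covers $S,S'$ with $S\cap S'=\{j\}$, $S\cup S'=[n]$; this is precisely the paper's parenthetical observation in the proof of \rp{pyramid} that the inequalities $z_i\geq 0$ are redundant (add the two rows of $C_u$ with $1$'s at position $i$ and its neighbors). So your argument trades the rank computation for an explicit convex decomposition, which is a mild simplification. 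Two small points to tidy: the case $s=1$ (i.e.\ $t=\tfrac{2n}{n+1}$) should be handled separately, but your inequality $z_j\geq 2-t$ for all $j$ then forces $z=a$; and your sentence ``these $n$ inequalities together with $\sum_i v_i=2$ cut out $F$ \dots\ so the remaining point is $v\geq 0$'' reads oddly, since once you know the $n$ vertex-cover inequalities are the facet inequalities of the simplex $F$, the step $v\geq 0$ is already subsumed.
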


\begin{proof} If $n$ is even then $\eps(I(G))=0$ by \rp{epsilon-vanishing}, so assume $n=2k+1$ for
$k \in\N$. To simplify notation we set $P=P(I)$, $F=F(I)$, and let $\hat P=\hat P(I)$ and 
$\hat F=\hat F(I)$ as defined in \re{hatF}.
By \rt{eps-volume}, $\eps(I(G))=\Vol_n(\hat F)$. In \rp{pyramid} below we show that $\hat F$ is
the pyramid over $F$ with apex $a=\big(\frac{1}{k+1},\dots, \frac{1}{k+1}\big)$. Since $F$ lies
in the hyperplane $\sum_{j=1}^nz_j=2$ and $\Vol_{n-1}(F)=1$ 
%by \rt{JonathanJack} and \rp{muniformunicyclic},
the equation \re{pyrvolume-apex} produces
$$\Vol_n(\hat F)=\Big(2-\frac{n}{k+1}\Big)\Vol_{n-1}(F)=\frac{2}{n+1}.$$
\end{proof}

To show that $\hat F$ is a pyramid over $F$ we first describe the  facet inequalities of $\hat P$ in \rl{facets} below.
Recall that the {\it circulant matrix $C_u$} generated by a vector $u=(u_0,\dots, u_{n-1})\in\R^n$
is the $n\times n$ matrix whose rows are obtained by the cyclic permutations of the entries of $u$.
The associated polynomial $f_u(t)=u_0+u_1t+\dots+u_{n-1}t^{n-1}$ of $C_u$ gives a formula for the rank of $C_u$ \cite[Proposition 1.1]{Ingleton}:
\begin{equation}\label{e:rankC}
\rk(C_u)=n-\deg\left(\gcd(t^{n}-1,f_u(t))\right).
\end{equation}

\begin{lem}\label{L:facets} The facets of $\hat P$ are defined by the inequalities $I_n\,z\geq 0$, $C_u\,z\geq {\bf 1}$, 
where $I_n$ is the identity matrix, ${\bf 1}$ is the vector of 1's, and $C_u$ is the circulant matrix 
generated by $u=e_1+\sum_{i=1}^k e_{2i}\in\R^n$, where $n=2k+1$. The same inequalities define the unbounded facets of 
 $P$.
\end{lem}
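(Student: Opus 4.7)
The plan is first to describe the facets of the Newton polyhedron $P=F+\R^n_{\geq 0}$ itself, and then to derive those of $\hat P$ by Fourier--Motzkin elimination of $z_j$ in each projection $\pi_j(P)$. I would begin by establishing that the edge polytope $F$ is an $(n-1)$-simplex on the vertices $v_a=e_a+e_{a+1}$. The incidence matrix of $C_n$ is the circulant generated by $(1,1,0,\dots,0)$, whose associated polynomial is $1+t$; since $n$ is odd, $-1$ is not an $n$-th root of unity, so $\gcd(t^n-1,1+t)=1$ and the rank formula \eqref{e:rankC} gives full rank $n$. Consequently the $v_a$ are linearly, hence affinely, independent, and $F$ is a simplex whose facets are in bijection with its vertices.

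Every facet normal $\eta$ of $P$ is nonnegative, and three cases arise. If $\eta>0$, the tight face of $F$ must be $F$ itself, and the cyclic system $\eta_a+\eta_{a+1}=c$ for all $a$ (forced by odd $n$) yields $\eta=\mathbf{1}$ and the compact facet $F$, cut out by $\sum z_i\geq 2$. If $\eta=e_i$, we obtain the coordinate facet $z_i\geq 0$. Otherwise $\eta$ supports exactly one facet $F_\ell$ of the simplex $F$ (the one missing $v_\ell$), and solving $\eta_a+\eta_{a+1}=1$ cyclically for $a\neq\ell$ with $\eta\geq 0$ primitive integer and $\eta\cdot v_\ell>1$ pins down $\eta=u^{(\ell-1)}$, producing the cover inequality $u^{(\ell)}\cdot z\geq 1$. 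A dimension count combining the tight facet of $F$ (of dimension $n-2$) with the recession cone $\R_{\geq 0}^{\{i:\eta_i=0\}}$ confirms facetness in each case, and the unbounded facets of $P$ are thus precisely the coordinate and cover ones.

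Since every facet inequality of $P$ has nonnegative coefficient on $z_j$, Fourier--Motzkin elimination of $z_j$ simply drops every inequality containing $z_j$ and retains the rest: the compact inequality $\sum z_i\geq 2$ and the coordinate inequality $z_j\geq 0$ both drop, a cover inequality $u^{(\ell)}\cdot z\geq 1$ drops iff $u^{(\ell)}_j=1$, and everything else survives. This yields $\pi_j(P)=\{y\in L_j:y_i\geq 0\ \text{for}\ i\neq j,\ u^{(\ell)}\cdot y\geq 1\ \text{for}\ u^{(\ell)}_j=0\}$. Intersecting the cylinders $\pi_j^{-1}(\pi_j(P))$ across all $j$ produces $\hat P=\{z:z\geq 0,\ C_u z\geq\mathbf{1}\}$: each row $u^{(\ell)}$ has $k\geq 1$ zeros and contributes its cover inequality from some cylinder, while each coordinate inequality $z_i\geq 0$ appears from any cylinder with $j\neq i$. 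Facetness of the listed inequalities for $\hat P$ inherits from the corresponding facetness for $P$ through the cylindrical structure. The main obstacle will be the case analysis in the second paragraph, ruling out any other facet normal $\eta\geq 0$ for $P$; the dimension bookkeeping there hinges on the cyclic rigidity of odd $n$, the same rigidity that via \eqref{e:rankC} applied to $f_u(t)=1+t+t^3+\dots+t^{2k-1}$ (which evaluates to $\zeta/(1+\zeta)\neq 0$ at each $n$-th root of unity $\zeta$) shows that $C_u$ itself is invertible, so the $n$ cover inequalities are mutually distinct.
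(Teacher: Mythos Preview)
Your approach is genuinely different from the paper's and, once completed, is a clean alternative. The paper never computes the facets of $P$ directly; instead it works with the projected simplex $F_i=\pi_i(F)\subset L_i$, produces each of its $n$ facet normals by a combinatorial $0/1$ labeling of the nodes of the ``cycle with a missing node,'' and then checks which of these are inner normals to $\pi_i(P)=F_i+\R^{n-1}_{\geq 0}$ (precisely those for which the two labels at the removed edge are both $1$). Collecting over all $i$ gives the rows of $C_u$. Your route --- first pin down the full facet list of $P$ and then observe that, because every facet normal is nonnegative, Fourier--Motzkin elimination of $z_j$ simply deletes every inequality with positive $z_j$-coefficient --- is more structural, and the elimination step is genuinely slick: the only inequality with strictly positive coefficients is $\sum z_i\geq 2$, so $\hat P$ is obtained from $P$ by dropping exactly that one inequality. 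This also makes the final clause of the lemma (that these are the unbounded facets of $P$) immediate rather than a closing remark.

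The one real gap is the step you flag yourself. In your case~3 you assert that a facet normal $\eta$ which is neither $\mathbf 1$ nor a standard basis vector must support a \emph{facet} of the simplex $F$, i.e.\ that $F^\eta$ misses exactly one vertex; only then does the cyclic system $\eta_a+\eta_{a+1}=m_\eta$ for $a\neq\ell$ force $\eta$ to be a shift of $u$. But a priori $F^\eta$ could be a smaller face, with the recession cone $C^\eta$ supplying the missing dimensions, and your dimension count only verifies facetness for the candidates you already produced, not completeness of the list. Two easy fixes: (i) if $m_\eta=0$ then the facet lies in $\{z_j=0:\eta_j>0\}$, forcing $\eta=e_i$; if $m_\eta>0$ and $\eta$ has a zero, intersect the putative facet $P\cap\{\langle\eta,z\rangle=m_\eta\}$ with the compact facet $F=P\cap\{\sum z_j=2\}$ and argue the intersection is a ridge (here $P$ is simple at each $v_a$, which you can check directly); or (ii) bypass the classification by showing the $2n+1$ listed inequalities already cut out $P$: at each vertex $v_a$ the tight ones among them --- the $n-2$ coordinate equalities, $\sum z_i=2$, and any one cover equality --- have normals of rank $n$, so they determine the tangent cone at $v_a$, hence the whole of $P$. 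Either addition makes your argument complete.
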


\begin{proof} 
First let us describe the primitive normals to the facets of $F_i=\pi_i(F)$. By definition,
$F$ is an $(n-1)$-simplex lying in the hyperplane $\sum_{j=1}^nz_j=2$ whose vertices are the
rows of the incidence matrix of the cycle $G$. Then $F_i$ is
an $(n-1)$-simplex lying in $L_i$ whose vertices are the rows of the incidence matrix of a ``graph" $G_i$ which 
is a cycle with omitted $i$-th node, so the rows corresponding to the edges with a missing node are two
standard basis vectors, see \rf{cycle-missing-node} for an example. 

\begin{figure}[h]
\includegraphics*[height=5.8cm]{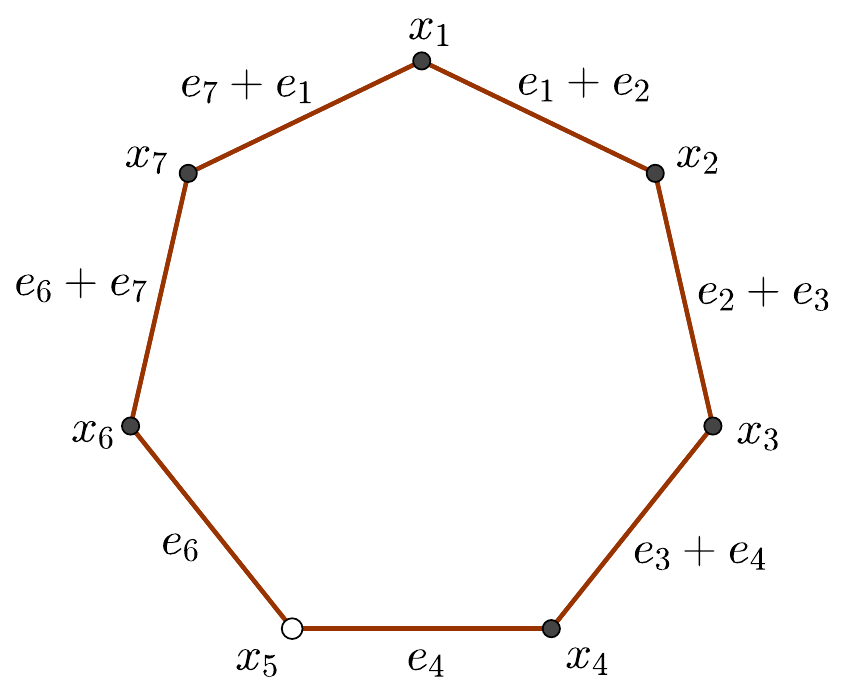}
\caption{A cycle with an omitted 5-th node.}
\label{F:cycle-missing-node}
\end{figure}

Since $F_i$ is a simplex, for every vertex $v\in F_i$ there is exactly one facet $F_i(v)$ not containing $v$. 
Here is a combinatorial way to produce a primitive normal to $F_i(v)$. (Note that 
its $i$-th entry can be arbitrary, so we may assume it is zero. Then it is unique up to sign.) 
Removing the edge from $G_i$  corresponding to $v$, we obtain a ``graph" $G_i(v)$. 
Place $0$ and $1$ at the nodes of $G_i(v)$
in an alternating way starting with the $0$ in $i$-th node and going both ways. This results
in a vector $u(v)\in\R^n$%. Its projection $\pi_i(u(v))$ 
 which is a primitive normal to $F_i(v)$. 
This process is illustrated in \rf{inner-normals} with $n=7$, $i=5$, and $v$ corresponding to the edge $\{x_1,x_2\}$.

\begin{figure}[h]
\includegraphics*[height=5.8cm]{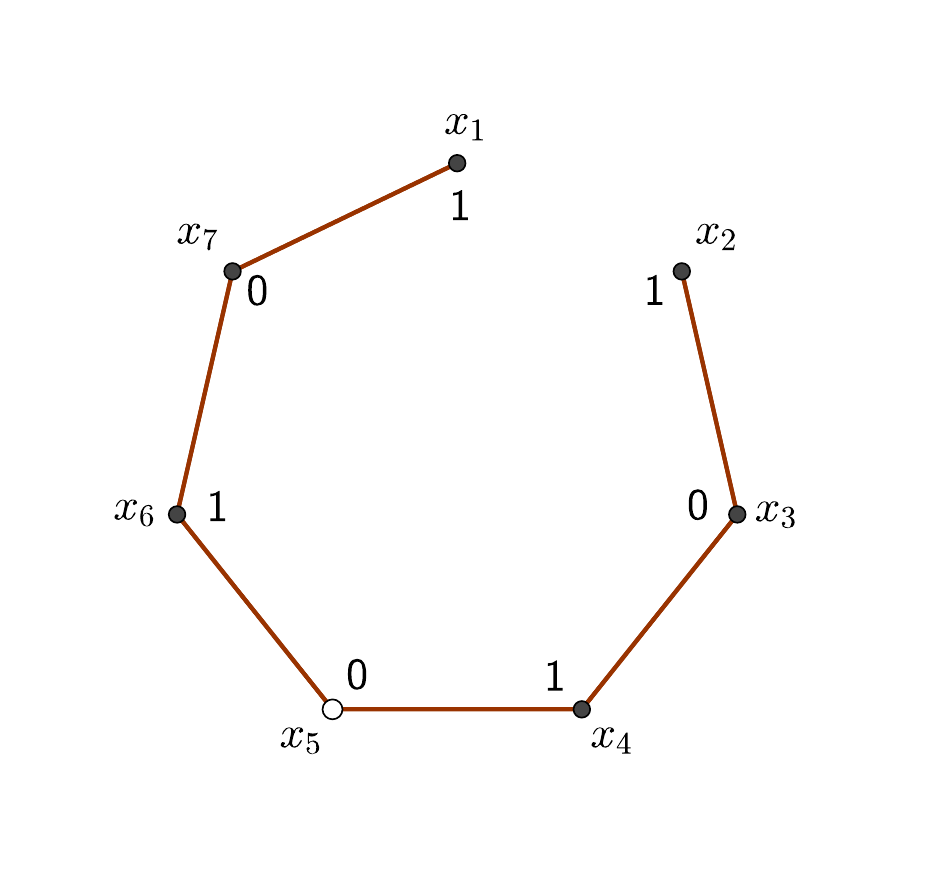}
\caption{The vector $u(v)=(1,1,0,1,0,1,0)$ is normal to $F_5(v)$ for $v=\{x_1,x_2\}$.}
\label{F:inner-normals}
\end{figure}

Indeed, $u$ is normal to $F_i(v)$ if and only if the linear function $\langle u,z\rangle$ takes the same value
at all vertices of $F_i$, but $v$. Assume for simplicity that $v$ corresponds to $\{x_1,x_2\}$ and $i=n=2k+1$. 
Then $v=e_1+e_2$ and the remaining vertices are $e_2+e_3,\dots, e_{2k-1}+e_{2k},e_{2k},e_1$.
Let $u=(u_1,\dots, u_{2k+1})$. Then $\langle u,z\rangle$ takes the same value on the remaining vertices if and only if
$$u_2+u_3=u_3+u_4=\dots=u_{2k-1}+u_{2k}=u_{2k}=u_1,$$
which implies $u_2=u_4=\dots=u_{2k}$ and $u_3=u_5=\dots=u_{2k-1}$, together with $u_{2k-1}=0$
and $u_{2k}=u_1$. Since $u$ is primitive, $u_1=u_2=u_4=\dots=u_{2k}=1$ which 
 justifies the combinatorial process of producing $u(v)$. The general case is similar.

Notice that the value of $\langle u(v),z\rangle$ at all vertices of $F_i$, but $v$ equals 1. Furthermore, its value at
$v$ equals the sum of the two values placed at the nodes of $v$.
These can be either both~1 or both~0. This shows that $u(v)$ is an inner normal to $\pi_i^{-1}(F_i)$ and, 
hence, to $\pi_i^{-1}(\pi_i(P))$ if and only if the two values are both 1.  Thus, the primitive inner
normals to the facets of  $\pi_i^{-1}(\pi_i(P))$ are vectors obtained by a cyclic
permutation of $(1,1,0,1,0\dots,1,0)$ and every such vector is the primitive inner normal to a
facet of $\pi_i^{-1}(\pi_i(P))$ for some $i$. Therefore, the facets
of $\hat P$ are given by $C_u\,z\geq {\bf 1}$ for $u=(1,1,0,1,0\dots,1,0)$, as stated.

Finally, we remark that all the facets of $\hat P$ are unbounded as the corresponding normals
have at least one coordinate equal zero. Thus, the same inequalities describe the unbounded facets of $P$.
\end{proof}

\begin{lem}\label{L:rank} Let  $C_u$ be the circulant matrix 
generated by $u=(1,1,0,1,0\dots,1,0)$ in $\R^n$ for $n=2k+1$. Then $\rk C_u=n$.
\end{lem}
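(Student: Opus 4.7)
The plan is to apply formula \re{rankC} and show that $\gcd(t^n-1, f_u(t)) = 1$, which will yield $\rk(C_u) = n - 0 = n$. Equivalently, I need to verify that no $n$-th root of unity is a root of the polynomial
$$f_u(t) = 1 + t + t^3 + t^5 + \cdots + t^{2k-1} = 1 + t(1 + t^2 + t^4 + \cdots + t^{2k-2}).$$

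First I would dispose of the case $\zeta = 1$: here $f_u(1) = 1 + k \neq 0$.

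For the main case, fix an $n$-th root of unity $\zeta \neq 1$. Because $n = 2k+1$ is odd, we have $\gcd(2,n) = 1$, so squaring is a bijection on the $n$-th roots of unity; in particular $\zeta^2 \neq 1$, and also $\zeta \neq -1$ since $-1$ is not an $n$-th root of unity for $n$ odd. Thus I can sum the geometric series:
$$1 + \zeta^2 + \cdots + \zeta^{2k-2} = \frac{\zeta^{2k}-1}{\zeta^2-1}.$$
Using $\zeta^n = \zeta^{2k+1} = 1$, i.e.\ $\zeta^{2k} = \zeta^{-1}$, the expression for $f_u(\zeta)$ collapses:
$$f_u(\zeta) = 1 + \zeta \cdot \frac{\zeta^{-1}-1}{\zeta^2-1} = 1 + \frac{1-\zeta}{(\zeta-1)(\zeta+1)} = 1 - \frac{1}{\zeta+1} = \frac{\zeta}{\zeta+1}.$$
Since $\zeta$ is a nonzero root of unity and $\zeta + 1 \neq 0$, this expression is well-defined and nonzero.

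Therefore $t^n - 1$ and $f_u(t)$ share no common root, so their gcd has degree $0$, and by \re{rankC} we conclude $\rk(C_u) = n$. The only subtle point is ensuring $\zeta^2 \neq 1$ and $\zeta \neq -1$ in the geometric series step, but both are immediate consequences of $n$ being odd, so I expect no substantial obstacle.
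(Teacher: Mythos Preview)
Your proof is correct and follows the same overall strategy as the paper: both invoke the rank formula \re{rankC} and show that $\gcd(t^n-1,f_u(t))=1$ by verifying there are no common roots. The execution differs slightly: you evaluate $f_u(\zeta)$ directly at each $n$-th root of unity $\zeta$, using the geometric series and the relation $\zeta^{2k}=\zeta^{-1}$ to obtain the closed form $f_u(\zeta)=\zeta/(\zeta+1)$, whereas the paper observes the single polynomial identity $(t^2-1)f_u(t)-(t^n-1)=t(t-1)$, which forces any common root to be $0$ or $1$ and then checks these two values. Your route is perfectly valid and yields an explicit value of $f_u$ at every root of unity; the paper's identity is a bit quicker and avoids separating the case $\zeta=1$.
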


\begin{pf} Let $f_u(t)=1+t+t^3+\dots+t^{2k-1}$ be the associated polynomial and let $g(t)=t^{n}-1$.
By \re{rankC}, $\rk C_u=n-\deg\left(\gcd(g(t),f_u(t))\right)$. Note that $(t^2-1)f_u(t)-g(t)=t(t-1)$. But 
neither $t=0$ nor $t=1$ is a root of $f(t)$, hence $\gcd(g(t),f_u(t))=1$ and the statement follows.
\end{pf}

\begin{prop}\label{P:pyramid}
The polytope $\hat F$ is the pyramid over $F$ with apex at $a=\big(\frac{1}{k+1},\dots, \frac{1}{k+1}\big)$.
\end{prop}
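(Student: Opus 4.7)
The plan is to identify $\hat F$ with the simplex $Q := \Pyr_a(F)$ by matching their facet inequalities. First I would reduce $\hat F$ to a convenient slab of $\hat P$. Since $P = F + \R^n_{\geq 0}$, the facet inequalities of $P$ are $z_j \geq 0$ together with $\sum_j z_j \geq 2$, the latter cutting out the unique compact facet $F$. For $z \in \hat P$ the coordinate inequalities already hold by \rl{facets}, so $z \in P$ iff $\sum_j z_j \geq 2$. Hence
$$\hat F = \cl(\hat P \setminus P) = \{z \in \hat P : \textstyle\sum_j z_j \leq 2\}.$$

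Next I would verify that $a$ is a vertex of $\hat P$ lying strictly on the slab side. The vector $u$ has $k+1$ nonzero entries, so $\langle u, a \rangle = (k+1)\cdot\frac{1}{k+1} = 1$, and by cyclic symmetry the same identity holds for every cyclic shift $u^{(i)}$. Since $C_u$ has full rank by \rl{rank}, $a$ is the unique point satisfying $C_u z = \mathbf{1}$, and $a \in \R^n_{>0}$ puts it inside $\hat P$. Furthermore $\sum_j a_j = n/(k+1) < 2$ because $n = 2k+1$.

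The core step is to identify the facet structure of $Q$. The $n+1$ points $a, v_1, \ldots, v_n$ (with $v_i = e_i + e_{i+1}$, indices cyclic) are affinely independent, because $v_1, \ldots, v_n$ span the hyperplane $\{\sum z_j = 2\}$ while $a$ does not lie in it. So $Q$ is an $n$-simplex. To locate the facet opposite $v_1$, compute $\langle u, v_j \rangle = u_j + u_{j+1}$ and inspect $u = (1,1,0,1,0,\dots,1,0)$: this value equals $1$ for every $j \neq 1$ and equals $2$ for $j = 1$. Combined with $\langle u, a \rangle = 1$, the hyperplane $\langle u, z \rangle = 1$ contains $\{a, v_2, \ldots, v_n\}$ and therefore coincides with $\aff(Q(v_1))$, with inner inequality $\langle u, z \rangle \geq 1$ (since $\langle u, v_1\rangle = 2$). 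By circulant symmetry, each cyclic shift $u^{(i)}$ describes a different facet in the analogous way. Thus $Q$ is cut out by $\sum_j z_j \leq 2$ and $C_u z \geq \mathbf{1}$; moreover, all vertices of $Q$ lie in $\R^n_{\geq 0}$, so the inequalities $z_j \geq 0$ are automatically satisfied on $Q$. Comparing with the inequality description of $\hat P$ from \rl{facets} gives $Q = \hat P \cap \{\sum_j z_j \leq 2\} = \hat F$.

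The only delicate point is checking that the hyperplane $\langle u, z \rangle = 1$ is genuinely the affine span of $\{a, v_2, \ldots, v_n\}$, which requires the affine independence of these $n$ points; this is immediate from the observation that $v_2, \ldots, v_n$ all satisfy $\sum z_j = 2$ while $a$ does not.
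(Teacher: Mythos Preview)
Your overall approach is correct and essentially parallels the paper's, but there is one factual slip in the first paragraph. You assert that the facet inequalities of $P$ are $z_j \geq 0$ together with $\sum_j z_j \geq 2$; this is false. For instance, when $n=3$ the point $(2,0,0)$ satisfies both conditions yet does not lie in $P = F + \R^3_{\geq 0}$. In fact $P$ has additional unbounded facets, namely the inequalities $C_u z \geq \mathbf{1}$ --- this is precisely the last sentence of \rl{facets}. Your conclusion $\hat F = \{z \in \hat P : \sum_j z_j \leq 2\}$ is nonetheless correct, but for the right reason: by \rl{facets} the polyhedra $P$ and $\hat P$ have the same unbounded facet inequalities ($I_n z\geq 0$ and $C_u z\geq\mathbf 1$), and the only additional (compact) facet inequality of $P$ is $\sum_j z_j \geq 2$; hence for $z \in \hat P$ one has $z \in P$ if and only if $\sum_j z_j \geq 2$.

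With that correction, your argument is a mild reorganization of the paper's proof. You build the pyramid $Q = \Pyr_a(F)$, compute its facet inequalities to be $C_u z \geq \mathbf{1}$ and $\sum_j z_j \leq 2$, and match them with those of $\hat F$; the paper instead starts from the facet description of $\hat F$ and uses \rl{rank} to identify $a$ as the unique vertex off $F$. Both routes rest on \rl{facets} and \rl{rank}. Your way of disposing of the redundancy of $z_j\geq 0$ (via nonnegativity of the vertices of $Q$) differs cosmetically from the paper's (adding two suitable rows of $C_u z\geq\mathbf 1$), but either works.
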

\begin{proof} 
Recall that $F$ is the unique compact facet of $P$ corresponding to the inequality $\sum_{j=1}^nz_j\geq 2$. 
Since $\hat F=\cl(\hat P\setminus P)$ lies in the other half space and the remaining facets inequalities for $\hat P$ and $P$ are the same, we conclude that $\hat F$ is given by  $C_u\,z\geq {\bf 1}$ and $\sum_{j=1}^nz_j\leq 2$. (One can see that the 
inequalities $I_n\,z\geq 0$ are redundant. Indeed, given $1\leq i\leq n$, add the two inequalities in 
$C_u\,z\geq {\bf 1}$ with $1$'s at the $i$-th and at the two adjacent places to obtain $z_i+2\geq z_i+\sum_{j=1}^nz_j\geq 2$,
which implies $z_i\geq 0$.) By \rl{rank}, $a=\big(\frac{1}{k+1},\dots, \frac{1}{k+1}\big)$ is the unique solution
to $C_u\,z= {\bf 1}$ which implies that $\hat F$ is the pyramid over $F$ with apex $a$.

\end{proof}

\begin{rem}\label{R:epsilonremarks}
Unlike the  $j$-multiplicity in \rp{product},
 the $\eps$-multiplicity of edge ideals  is not multiplicative over the connected components of a graph.
For instance, if $G$ is the disjoint union of a 3-cycle and a 5-cycle, then by direct computation using \rt{eps-volume} the $\eps$-multiplicity of the edge ideal of $G$ is $\frac{4}{9}$, while by \rp{epsiloncycle}  
the $\eps$-multiplicity of the edge ideals  of the 3-cycle and the 5-cycle are $\frac{1}{2}$ and $\frac{1}{3}$ respectively. Furthermore, in contrast to \rp{whisker} for $j$-multiplicity, the $\eps$-multiplicity is not preserved after removal of a free node. For example, if $G$ is a 3-cycle with a path of length 2 attached to one of its nodes, then the $\eps$-multiplicity of
$I(G)$  is indeed $\frac{1}{3}$, while after removing the free node the $\eps$-multiplicity of the edge ideal is $\frac{1}{2}$. This example also shows that the $\eps$-multiplicity may increase if we pass to a subgraph. Therefore \rt{inequality} does not hold true for the $\eps$-multiplicity of edge ideals. However, since the $\eps$-multiplicity is less than or equal to the $j$-multiplicity for an arbitrary ideal \cite{UV}, the upper bounds in \rc{completebound} and \rc{bounds} are valid for the $\eps$-multiplicity of the edge ideals as well. 
\end{rem}

%%%%%%%%%%%%%%%%%%%%%%%%%%%%%%%%%%%%%%%%%
%%%%%%%%%%%%%%%%%%%%%%%%%%%%%%%%%%%%%%%%%
\section*{Acknowledgment}

Evidence for this work was provided
by many computations done using {\tt Macaulay2}, by Dan
Grayson and Mike Stillman \cite{M2} and {\tt polymake}, by Ewgenij Gawrilow and Michael Joswig \cite{polymake}. We are grateful to Artem Zvavitch for pointing out the results on the volume of free sums in \cite[p. 15]{RyaZva} and for fruitful discussions. Finally, we are thankful to anonymous referees for suggestions on improving the results of \rt{product} and \rt{inequality}, as well as instructive comments which helped with the exposition.

%%%%%%%%%%%%%%%%%%%%%%%%%%%%%%%%%%%%%%%%%
%%%%%%%%%%%%%%%%%%%%%%%%%%%%%%%%%%%%%%%%%

%%%%%%%%%%%%%%%%%%%%%%%%%%%%%%%%%%%%%%%%%%%

\end{document}